\documentclass[10pt]{amsart}
\usepackage{amsmath,amsfonts,amssymb,amsthm}
\usepackage{graphics,graphicx}
\usepackage[colorlinks,hyperindex,linkcolor=blue,
citecolor=purple]{hyperref}
\hypersetup{
pdfauthor = {Francis Bischoff, Alvaro del Pino Gomez, Aldo Witte},
 pdftitle= {Jets of foliations and bk algebroids},
 pdfsubject = {Lie Theory, singular foliations, log-Geometry, Lie algebroids}}
\usepackage{tikz-cd}
\usepackage{tikz}
\usepackage[all]{xy}
\usepackage{subfiles}
\usetikzlibrary{matrix,arrows,decorations.pathmorphing}

\newcommand{\ff}{\mathcal{F}}

\newcommand{\gfrac}{\mathfrak{g}}

\newcommand{\rr}{\mathbb{R}}

\DeclareMathOperator{\at}{at}

\usepackage{thmtools}
\declaretheorem[style=definition,qed=$\diamondsuit$]{definition}
\declaretheorem[style=definition,qed=$\triangle$,sibling=definition]{example}
\declaretheorem[style=plain,sibling=definition]{theorem}
\declaretheorem[style=plain,sibling=definition]{lemma}
\declaretheorem[style=plain,sibling=definition]{proposition}
\declaretheorem[style=plain,sibling=definition]{corollary}
\declaretheorem[style=definition,qed=$\diamondsuit$,sibling=example]{claim}
\declaretheorem[style=definition,sibling=example]{question}
\declaretheorem[style=definition,qed=$\diamondsuit$,sibling=claim]{remark}

\newtheorem*{claim*}{Claim}
\numberwithin{theoremalpha}{section}

\numberwithin{equation}{section}
\numberwithin{definition}{section}
\numberwithin{theorem}{section}
\numberwithin{proposition}{section}
\numberwithin{lemma}{section}
\numberwithin{example}{section}
\numberwithin{remark}{section}
\numberwithin{corollary}{section}
\numberwithin{question}{section}
\numberwithin{problem}{section}
\numberwithin{assumption}{section}

\newtheorem*{theorem*}{Theorem}

\newtheoremstyle{named}{}{}{\itshape}{}{\bfseries}{.}{.5em}{\thmnote{#3}#1}
\theoremstyle{named}

\numberwithin{equation}{section}
\address{University of Regina, 3737 Wascana Parkway, Regina, Saskatchewan, S4S 0A2, Canada}
\email{Francis.Bischoff@uregina.ca}

\address{Department of Mathematics, Bundesstraße 55, 20146 Hamburg, Germany}
\email{aldowitte@hotmail.nl}

\title[Polynomial degeneration and the Poisson geometry of truncated polynomials]{Polynomial degeneration and the \\ Poisson geometry of truncated polynomials}
\author{Francis Bischoff and Aldo Witte}
\begin{document}

\begin{abstract}
We develop a formalism for studying geometric structures that degenerate to
polynomial order along a hypersurface $W \subset M$. We then demonstrate it in the
study of Poisson geometry, where it leads to methods for constructing
generically symplectic Poisson structures with non-trivial symplectic variation
along their degeneracy locus. This is in contrast to $b/\log$-symplectic and $b^k$-symplectic
structures, where this variation always vanishes. Our main
insight is that the higher residue data along the hypersurface is controlled by
a group of transverse diffeomorphisms, which in our case is the group $G_k$ of
degree-$k$ truncated polynomials. We show that the symplectic variation of our
Poisson structures is determined by the obstruction to lifting a
$G_k$-representation of the fundamental group $\pi_1(W)$ to $G_{k+1}$, and we
construct maps from a $G_k$-character variety into the moduli space of Poisson
structures, with the variation detecting the non-triviality of the resulting
families.
\end{abstract}


\maketitle

\tableofcontents

\section{Introduction}
Geometric structures degenerating along a hypersurface $W \subset M$ arise constantly throughout differential and algebraic geometry. This happens, for example, when a flat connection develops a singularity, or when a Poisson structure drops rank. Often, there is rich information, such as residues, that is hidden along the hypersurface and which is crucial to properly understanding the degeneration. A prominent example of this phenomenon occurs in the work of Melrose \cite{MR1348401}, who extended the Atiyah-Singer index theorem to manifolds with boundary. In this case, the operators he was interested in, such as the Laplacian, degenerate along the boundary, and in order to properly handle these, Melrose developed the b-calculus. 

In this paper, we develop a formalism for treating \emph{polynomial} degenerations and demonstrate it in the study of Poisson geometry, where it leads to methods for constructing generically symplectic Poisson structures with interesting new properties. In this case, the hidden data along the hypersurface takes the form of a sequence of `higher residues'. This greatly extends the b-calculus, which involves only `linear' degenerations, as well as the work of Scott \cite{MR3523250} which treats the simplest type of polynomial degeneration. 

Our formalism is based on a study of differential forms with prescribed singularity-type along the hypersurface $W$, and there are two key insights which inform our approach. First, the main objects controlling the way in which a geometric structure can degenerate along $W$ are groups of transverse diffeomorphisms. In our case, these are the groups
\[
G_{k} = \{ a_0 z + a_1 z^2 + ... + a_{k-1}z^{k} \ | \ a_{i} \in \mathbb{R}, \ a_0 \neq 0 \}
\]
of degree $k$-truncated polynomials with product given by function composition, and we think of them as the diffeomorphism groups of $k$-th order fat points $\mathsf{Spec}(\mathbb{R}[z]/(z^{k+1}))$. Their relevance comes from the fact that a degenerating geometric structure must be controlled along the $k$-th order neighbourhood of $W$, which is locally isomorphic to 
\[
W \times \mathsf{Spec}(\mathbb{R}[z]/(z^{k+1})).
\] 

In order to specify what it means for a geometric structure to degenerate along $W$ to order $k+1$, we must choose a foliation on the $k$-th order neighbourhood. Since these foliations are classified by their holonomy representations \cite{foliationpaper, francis2024singular}
\[
\phi: \pi_1(W) \to G_{k},
\]
the character variety $\mathsf{Hom}(\pi_1(W), G_k)/G_k$ provides moduli for $k+1$-st order degenerations. 

Our second main insight is that the higher residues of a degenerating Poisson structure are tightly controlled by the $G_{k}$-representation. As a result, the $G_k$-character variety plays a central role in the study of generically symplectic Poisson structures. This observation leads to our two main results, in which the geometry of $G_k$-representations governs the Poisson geometry: 
\begin{enumerate}
\item The symplectic leaves of a generically symplectic Poisson structure degenerating along $W$ can vary non-trivially. In fact, the symplectic variation is prescribed by an \emph{extension class} which measures the failure of a $G_{k}$-representation to lift to $G_{k+1}$: 
\[
\begin{tikzcd}
& G_{k+1} \arrow[d] \\
\pi_1(W) \arrow[r, "\phi"] \arrow[ur, dashed] & G_k
\end{tikzcd}
\]
This is in contrast to the case of $b/\log$ and $b^m$-symplectic structures, where the symplectic variation is always zero \cite{GUILLEMIN2014864, MR3523250}. 
\item The close relationship between the higher residues and the holonomy representation $\phi$ allows us to move around in the space of Poisson structures by varying $\phi$. More precisely, we show that deformations of the representation can often be lifted uniquely to deformations of the Poisson structure. This leads to the construction of maps from the $G_k$-character varieties into the moduli space of Poisson structures, providing a method for constructing large families of generically symplectic Poisson structures with non-vanishing symplectic variation. The variation then serves as a detecting invariant for the non-triviality of these families, analogous to the Torelli theorem in algebraic geometry.
\end{enumerate}

\subsubsection*{Outline of paper} We now give a brief outline of the main steps of the paper. The formalism we employ to describe degenerations is that of \emph{hypersurface (HS) algebroids}, a family of Lie algebroids introduced in \cite{foliationpaper}. In Section \ref{sec:HSalg}, we recall the main features of their theory, including their Riemann-Hilbert classification and the extension class. In Section \ref{sec:Examples} we introduce the two running examples of the paper: an HS algebroid on a mapping torus associated to the Arnold cat map, and a family of universal hypersurface algebroids. In Section \ref{sec:algcoh}, we develop the de Rham theory of HS algebroids. This provides a formalism for describing the global higher residues of degenerating geometric structures and leads to a generalization of the cohomology results  for the $b$-tangent bundle of Mazzeo-Melrose \cite[Proposition 2.49]{MR1348401} and the $b^k$-tangent bundles of Scott \cite[Proposition 4.3]{MR3523250}. We then apply this formalism in Section \ref{sec:symalg} to the study of generically symplectic Poisson structures. Here, we introduce HS symplectic structures as symplectic forms on HS algebroids and prove our first main result (Theorem \ref{prop:structureonD}), relating the symplectic variation to the extension class. In Section \ref{sec:defconst}, we prove our second main result, which describes how to lift deformations of HS algebroids to deformations of HS symplectic structures. Finally, this is applied in Section \ref{sec:deformalongcharvar} to construct maps from the $G_{k}$-character variety into the moduli space of Poisson structures. This is illustrated in the case of mapping tori, where we construct arbitrarily large families of HS symplectic structures with non-vanishing symplectic variation. We end the paper with Section \ref{sec:ques}, which raises some open questions. 


\subsubsection*{Relationship to the existing literature}
This paper arose by splitting \cite{bischoff2023jets} into two expanded papers, the first being \cite{foliationpaper}. For this reason, some of the results in this paper have appeared in the arXiv preprint \cite{bischoff2023jets}. 

The Poisson structures in this paper fit into the framework of symplectic Lie algebroids. The study of this class of Poisson structures was initiated by Nest, Tsygan \cite{NestTsygan2001} and Radko \cite{Radko2001ACO}, and further studied in a number of papers, including \cite{GUILLEMIN2014864, Gualtieri-Li-2012, gualtTrop, matviichuk2020local, MR4229238, MR4523256, MR4257086, MR4236806, MR3952555}. The Lie algebroids involved have often been the $b$-tangent bundle \cite{MR1348401}, also known as the logarithmic tangent bundle \cite{deligne1971hodge,MR0417174,saito1980theory}, and the $b^{k}$-tangent bundles of Scott \cite{MR3523250}. Our paper makes use of the more general hypersurface algebroids. 

\subsection*{Acknowledgements}

We are grateful to Ioan Marcut, Marius Crainic, and Marco Zambon for several helpful discussions, and to Marco Gualtieri for providing valuable feedback for rewriting the original version of this paper. We are especially thankful to \'Alvaro del Pino for listening to our endless conversations about the project.  The first author is supported by an NSERC Discovery grant. The second author was supported by FWO and FNRS under EoS projects G0H4518N and G012222N, FWO-EoS Project G083118N, directly by the University of Antwerp via BOF 49756 and is supported by the Deutsche Forschungsgemeinschaft (DFG, German Research Foundation) -- SFB 1624 -- ``Higher structures, moduli spaces and integrability'' -- 506632645.

%
%
\section{Review of hypersurface algebroids}\label{sec:HSalg}

In order to describe degenerating geometric structures, we use the formalism of hypersurface (HS) algebroids. This is a family of Lie algebroids that was introduced in \cite{foliationpaper, bischoff2023jets} as a generalization of the $b^{k}$-algebroids of Scott \cite{MR3523250}. In this section, we recall some of their basic properties, which were established in \cite{foliationpaper}. A version of the classification result was obtained independently in \cite{francis2024singular}. 

\begin{definition}
Let $M$ be a manifold and let $W \subset M$ be a closed hypersurface. A \emph{hypersurface (HS) algebroid} for $(W, M)$ is a Lie algebroid $A \Rightarrow M$ with the property that the anchor map $\rho: A \to TM$ is an isomorphism on the complement $M \setminus W$ and has corank $1$ on $W$. The algebroid $A$ has \emph{order} $k$ if the determinant of the anchor, $\det(\rho) : \det(A) \to \det(TM)$, vanishes along $W$ to order $k$. We always assume that the order is finite. 
\end{definition}

A fundamental result about HS algebroids is their local normal form, which follows from a straightforward application of the splitting theorem for Lie algebroids \cite{Duf01, Fer02, Wei00, BLM19}. 

\begin{proposition}\cite[Proposition 1.4]{foliationpaper} \label{normalformprop}
Let $A \Rightarrow M$ be an HS algebroid of order $k$ for $(W, M)$. Then there are local coordinates $(z, x_2, ..., x_n)$ for $M$ such that $W = \{ z = 0 \}$, and such that $A$ is spanned by the following sections 
\[
z^k \partial_z, \partial_{x_2}, ..., \partial_{x_n}. \hfill \qedhere
\]
\end{proposition}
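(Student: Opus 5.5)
The plan is to apply the splitting theorem for Lie algebroids at a point $p \in W$, and then normalize the resulting one-dimensional transverse algebroid by a one-variable coordinate change. Points off $W$ need no argument: there $\rho$ is an isomorphism, so $A \cong TM$ locally. Throughout, "spanned" means a local frame of $A$ (as a subsheaf of vector fields, via $\rho$) over the chart.

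\emph{Step 1: splitting.} Fix $p \in W$. Since $\rho$ has corank $1$ on $W$, the image $\rho(A_p)$ is a hyperplane of $T_pM$. Because $A$ is an isomorphism off $W$, the orbit of $A$ through $p$ is contained in $W$. It is therefore an open piece of $W$, and $\rho(A_p) = T_pW$. The splitting theorem gives coordinates $(z, x_2, \dots, x_n)$ near $p$ with $W = \{z=0\}$, and an isomorphism $A \cong T\mathbb{R}^{n-1}_{x} \times B$, where $B$ is a rank-one Lie algebroid on a transversal $\mathbb{R}_z$ whose anchor vanishes at $0$. Concretely, $A$ is spanned by $\partial_{x_2}, \dots, \partial_{x_n}$ together with a section $e$ satisfying $\rho(e) = f(z)\partial_z$ for a smooth function $f$ of $z$ alone, with $f(0) = 0$.

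\emph{Step 2: identify the vanishing order.} In this frame,
\[
\det(\rho)(e \wedge \partial_{x_2} \wedge \cdots \wedge \partial_{x_n}) = f(z)\,\partial_z \wedge \partial_{x_2} \wedge \cdots \wedge \partial_{x_n}.
\]
So the order-$k$ hypothesis says exactly that $f$ vanishes to order $k$ at $0$. Since $f$ depends only on $z$, Hadamard's lemma gives $f(z) = z^k u(z)$ with $u(0) \neq 0$.

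\emph{Step 3: normalize $u$.} I need to find a change of variable $w = g(z)$, with $g(0) = 0$ and $g'(0) \neq 0$, and a unit $c(z)$, such that
\[
c\, z^k u(z)\, \partial_z = w^k \partial_w .
\]
Rescaling $e$ by a unit does not change the span, so we may use any such $c$. In the new variable the vector field reads $z^k u(z) g'(z)\,\partial_w$. It therefore suffices that $z^k u(z) g'(z) / g(z)^k$ be a smooth nonvanishing function of $z$. Taking $g(z) = z$ already works, since the quotient is just $u$. Thus the span of $z^k u \partial_z$ equals the span of $z^k \partial_z$ as a module over smooth functions, simply because $u$ is a unit. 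No change of coordinates is needed: I replace $e$ by $u^{-1} e$, which is legitimate because $u$ is a function of $z$ that is invertible near $0$.

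This completes the argument. The only substantive input is Step 1: one must check that the splitting theorem applies in this setting and yields a transverse algebroid whose anchor is a function of $z$ alone, so that the vanishing order of $\det \rho$ can be read off as the order of $f$. I expect this to be the main point to verify; the rest is the unit-rescaling observation above.
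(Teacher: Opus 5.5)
Your proof is correct and takes the paper's route: the paper gives no details beyond saying the normal form is a straightforward application of the splitting theorem for Lie algebroids. Your steps supply exactly those details: the product $A \cong T\mathbb{R}^{n-1}\times B$ with $B$ a rank-one algebroid on the transversal, reading the order of $\det(\rho)$ off the anchor $f(z)\partial_z$ of $B$, and rescaling the generator by the unit $u^{-1}$.
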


When working with HS algebroids it is convenient to choose a tubular neighbourhood embedding for $W$. This is because HS algebroids on the total space of a line bundle admit an explicit description in terms of differential forms satisfying a Maurer-Cartan equation. 

\begin{theorem}\cite[Proposition 3.1 and Theorem 6.4]{foliationpaper} \label{MCequation}
Let $\pi : L \to W$ be a real line bundle. There is a bijection between HS algebroids of order $k+1$ for $(W, L)$ and the data of `splittings' $\sigma = \nabla + \sum_{i = 1}^{k-1} \eta_i$, where $\nabla$ is a flat connection on $L$ and $\eta_{i} \in \Omega^1_{W}(L^{-i})$ are twisted differential forms satisfying the following Maurer-Cartan equation 
\[
d^{\nabla} \eta_{r} + \frac{1}{2} \sum_{i + j = r} (j-i) \eta_{i} \wedge \eta_{j} = 0,
\]
for $r = 1, ..., k-1$, and where $d^{\nabla}$ is the differential on $\Omega^{\bullet}_W(L^{-r})$ induced by $\nabla$. 
\end{theorem}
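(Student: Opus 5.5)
To prove Theorem \ref{MCequation}, I would build the bijection in both directions by working on the total space of $L$. I use the fiberwise Euler vector field and the filtration of functions by order of vanishing along the zero section $W$.

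\emph{From a splitting to an algebroid.} Let $z$ be the tautological fiber coordinate; this is a section of $\pi^*L^{-1}$, well defined up to the twist. A flat connection $\nabla$ on $L$ gives a horizontal lift $X \mapsto \tilde X^{\nabla}$ of vector fields on $W$, and this lift is involutive because $\nabla$ is flat. Each $\eta_i \in \Omega^1_W(L^{-i})$ contracted with $X$ gives a fiberwise function $\eta_i(X) z^i$, which is homogeneous of degree $i$. I would then define
\[
\sigma(X) = \tilde X^{\nabla} + \sum_{i=1}^{k-1} \eta_i(X)\, z^{i+1}\partial_z .
\]
Here $z\partial_z$ is the Euler vector field $E$, which is canonical, so the term is really $\eta_i(X) z^i E$. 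Let $A$ be the $C^\infty(L)$-module of vector fields spanned by $\sigma(\mathfrak{X}(W))$ and $z^{k+1}\partial_z = z^k E$. In a local trivialization this is exactly the normal form of Proposition \ref{normalformprop}, so $A$ is locally free, its anchor is an isomorphism off $W$, and $\det\rho$ vanishes to order $k+1$.

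The key computation is closure under bracket, and I would do it directly. We have $[\tilde X,\tilde Y] = \widetilde{[X,Y]}$ and $[\tilde X, z^iE] = (\nabla_X z^i)E$. Using $[z^iE, z^jE] = (j-i)z^{i+j}E$ and expanding $[\sigma X,\sigma Y] - \sigma[X,Y]$, the coefficient of $z^rE$ becomes
\[
\Big(d^\nabla\eta_r + \tfrac12\sum_{i+j=r}(j-i)\,\eta_i\wedge\eta_j\Big)(X,Y).
\]
For $r\le k-1$ this must vanish; for $r\ge k$ the terms already lie in $z^kE\cdot C^\infty$. So $A$ is involutive exactly when the Maurer--Cartan equation holds. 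The sign and ordering of the factor $(j-i)$ is the routine part I would check carefully.

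\emph{From an algebroid to a splitting.} Given $A$ of order $k+1$, Proposition \ref{normalformprop} shows that $A$ restricted to $W$ contains the rank-one kernel of the anchor. I would also use that vector fields tangent to $A$ preserve $W$ and its $k$-th order neighbourhood. For each $X\in\mathfrak X(W)$, I need to choose sections whose anchors project (via $\pi$) to $X$. These are determined only modulo $C^\infty\cdot z^kE$, that is, modulo $z^{k+1}\partial_z$. Modulo this ambiguity, a $\pi$-projectable vector field can be Taylor expanded in $z$ along the fiber. Its components are a horizontal part, which gives a connection, plus terms $\eta_i(X)z^iE$ for $i\ge1$, plus a possible term $\eta_0(X)E$ that gets absorbed into the connection.

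\emph{The main obstacle.} The delicate step is showing that this expansion is canonical, so that the splitting is well defined and inverse to the construction above. Concretely, I must show that $A$ is generated by projectable sections that are polynomial in $z$ of degree $<k$, modulo $z^kE$. I would handle this by Taylor expanding in $z$ and pushing the remainder of order $\ge k$ into $z^kE$. Two further points need to be settled here. The first is that the correspondence is with splittings of the $k$-jet of the tangent data, so the exact statement depends on a uniqueness convention, presumably fixed in \cite{foliationpaper}. The second is that a fiber-preserving diffeomorphism tangent to the identity could change the $\eta_i$. I expect that requiring the identification to be relative to the fixed linear structure of $L$ pins them down. Once this is in place, involutivity of $A$ gives flatness of $\nabla$ (the $r=0$ term) and the Maurer--Cartan equations by the computation above, which completes the bijection.
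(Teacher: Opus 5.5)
Your proposal is correct and takes essentially the route behind the statement as the paper recalls it from \cite{foliationpaper}: $A(\sigma)$ is the module generated by $\sigma(X)=\nabla_X+\sum_i\eta_i(X)\mathcal{E}$ and $L^{-k}\mathcal{E}$, its involutivity is equivalent to flatness of $\nabla$ plus the Maurer--Cartan equations by exactly your bracket computation (your coefficient of $z^r\mathcal{E}$, including the sign of $(j-i)$, is right), and the inverse map truncates the fibrewise homogeneous expansion of $\pi$-related lifts. The two ``further points'' you worry about are not real obstacles, because the bijection is between HS algebroids on the fixed total space, viewed as $C^\infty(L)$-submodules of vector fields on $\mathrm{tot}(L)$ (no diffeomorphisms are divided out), and the homogeneous decomposition with respect to $\mathcal{E}$ is canonical; what you actually need to check is that every $X$ has a $\pi$-related lift in $A$ and that the $\pi$-vertical sections of $A$ are exactly $C^\infty(L)\cdot L^{-k}\mathcal{E}$, both of which follow quickly from Proposition \ref{normalformprop}.
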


We can explicitly describe the HS algebroid $A(\sigma)$ associated to a splitting $\sigma = \nabla + \sum_{i = 1}^{k-1} \eta_{i}$. Let $\mathcal{E}$ denote the Euler vector field on $L$. Viewing a section $s \in L^{-i}$ as a function with fibrewise degree $i$ on $L$, we can form a vertical vector field $s \mathcal{E} \in \mathfrak{X}(L)$. Then given $X \in \mathfrak{X}(W)$, a vector field on $W$, the splitting $\sigma$ gives rise to the following vector field on the total space of $L$
\begin{equation} \label{algebroidsections}
\sigma(X) = \nabla_{X}  + \sum_{i = 1}^{k-1} \eta_{i}(X) \mathcal{E}.
\end{equation}
The HS algebroid $A(\sigma)$ associated to $\sigma$ is generated by the vector fields $\sigma(X)$ along with $L^{-k} \mathcal{E}$, which are vertical vector fields whose coefficients have fibrewise degree $k+1$. A local basis of $A(\sigma)$ is given by $\sigma(X_1), ..., \sigma(X_{n}), s \mathcal{E}$, where $X_{1}, ..., X_{n}$ is a local basis of $TW$, and $s \in L^{-k}$ is a non-vanishing section. 

\begin{remark}[Scott's $b^{k+1}$-algebroids]
In \cite{MR3523250}, the $b^{k+1}$-tangent bundles are introduced and defined via the use of semi-global defining functions $f$ for the hypersurface $W$. Note that there is not a unique $b^{k+1}$-tangent bundle, since it depends on the choice of defining function $f$. However, as we show in \cite{foliationpaper}, all choices lead to isotopic algebroids. In terms of Theorem \ref{MCequation}, we obtain a Scott $b^{k+1}$-tangent bundle by taking $L = \mathbb{R} \times W$ to be the trivial bundle, equipped with the trivial connection $\nabla = d$ and all $\eta_i = 0$.  
\end{remark}

\subsection{The truncated polynomial group} \label{sec:structuregroup}
We recall a few basic properties of $G_{k}$, the group of $k$-truncated polynomials. Its elements have the form 
\begin{equation} \label{polynomial}
f(z) = a_0 z + a_1 z^2 + ... + a_{k-1} z^k,
\end{equation}
where $a_{i} \in \mathbb{R}$ and $a_{0} \neq 0$, and the group structure is given by function composition followed by truncation of any terms of degree above $k$. By forgetting the highest order term in $f(x)$, we obtain a smooth homomorphism $p_{k} : G_{k} \to G_{k-1}$ whose kernel is isomorphic to the additive group $\mathbb{R}$. Iterating, we obtain a tower of homomorphisms terminating at $G_{1} = \mathbb{R}^*$
\[
... \to G_{k} \to G_{k-1} \to ... \to G_{2} \to G_{1} = \mathbb{R}^*. 
\]
Consider the composition $\pi_{k} : G_{k} \to G_{1}$. This is the map which extracts the linear part $a_{0}$ of a polynomial $f(z)$, or equivalently, takes the derivative $df_{0}$. The kernel $K_{k} = \mathrm{ker}(\pi_{k})$ is the nilpotent subgroup consisting of polynomials where $a_{0} = 1$. By viewing an element of $G_{1} = \mathbb{R}^*$ as a linear diffeomorphism, we obtain a group homomorphism $j_{k} : G_{1} \to G_{k}$ splitting the map $\pi_{k}$. As a result, we have the following semi-direct product decomposition 
\[
G_{k} \cong K_{k} \rtimes \mathbb{R}^*. 
\]

\subsection{Riemann-Hilbert classification} \label{classificationreminder}
In \cite{foliationpaper} we prove a Riemann-Hilbert correspondence for hypersurface algebroids. This classifies HS algebroids for $(W,M)$ in terms of $G_{k}$-representations of the fundamental group $\pi_1(W)$. We briefly recall the relevant aspects of this classification. 

Choose a basepoint $x \in W$ and consider the space $\mathrm{Hom}(\pi_{1}(W, x), G_{k})$ of homomorphisms, which is an algebraic variety when $\pi_1(W,x)$ is finitely presented, and it is equipped with a conjugation action of $G_{k}$. We will restrict our attention to the action of the connected component of the identity $G_{k}^{0} \subset G_{k}$. 

The projection $\pi_k: G_{k} \to \mathbb{R}^* \cong \mathbb{R} \times \mathbb{Z}/2 \to  \mathbb{Z}/2$ induces a homomorphism 
\[
F: \mathrm{Hom}(\pi_{1}(W, x), G_{k}) \to \mathrm{Hom}(\pi_{1}(W,x), \mathbb{Z}/2) \cong H^{1}(W, \mathbb{Z}/2). 
\]
The latter space classifies real line bundles over $W$ through their first Whitney class. Hence, given a line bundle $L$, with class $w_{1}(L) \in H^{1}(W, \mathbb{Z}/2)$, we may restrict attention to the fibre $\mathrm{Hom}_{L}(\pi_{1}(W, x), G_{k}) := F^{-1}(w_{1}(L))$. This subspace is preserved by the $G_{k}^{0}$-action.

Now given a smooth manifold $M$ containing $W$ as a closed hypersurface, let $\nu_{W} = TM|_{W}/TW$ be the normal bundle, and define 
\[
M_{k}(M, W) = \mathrm{Hom}_{\nu_{W}}(\pi_{1}(W, x), G_{k})/G_{k}^{0}
\]
to be the space of orbits with the quotient topology. We call this space the \emph{$G_{k}$-character variety of $(W,M)$}. 

\begin{theorem}\cite[Corollary 8.42]{foliationpaper} \label{RHthm}
Let $M$ be a manifold and let $W \subset M$ be a closed hypersurface. There is a homeomorphism between the space of isotopy classes\footnote{The isotopies are required to restrict to the identity on $W$.} of hypersurface algebroids of order $(k+1)$ for $(W,M)$ and the $G_{k}$-character variety $M_{k}(M, W)$. Under this correspondence, Scott's $b^{k+1}$-algebroids correspond to the trivial representation. 
\end{theorem}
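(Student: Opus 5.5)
Our plan is to build the correspondence in two stages. First we reduce to a tubular neighbourhood, and then we use the Maurer--Cartan description of Theorem \ref{MCequation} to turn isotopy classes of splittings into conjugacy classes of holonomy representations.

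\textbf{Localization.} Fix a tubular neighbourhood embedding $\nu_W \hookrightarrow M$. An HS algebroid of order $k+1$ is determined, up to isotopy fixing $W$, by its restriction to this neighbourhood, because away from $W$ it is just $TM$. Moreover, any two tubular embeddings are isotopic rel $W$. So it suffices to classify order $k+1$ HS algebroids on the total space of $L=\nu_W$, up to isotopies that restrict to the identity on $W$.

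\textbf{From splittings to representations.} By Theorem \ref{MCequation}, such algebroids correspond to splittings $\sigma=\nabla+\sum\eta_i$ satisfying the Maurer--Cartan equation. We interpret this equation as flatness of a connection on $W\times\mathsf{Spec}(\mathbb{R}[z]/(z^{k+1}))$ locally, or globally on the bundle of $k$-jets of fibre germs of $L$ at the zero section. Concretely, in a local trivializing frame the vector fields \eqref{algebroidsections} become $X+\sum_{i=0}^{k-1}\eta_i(X)z^{i+1}\partial_z$, taken modulo $z^{k+1}\partial_z$, where $\eta_0$ is the connection form. These are exactly elements of the Lie algebra $\mathfrak{g}_k$ of $G_k$, which consists of vector fields $\sum_{i=0}^{k-1} c_i z^{i+1}\partial_z$ truncated at order $k+1$.

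The Maurer--Cartan equation is then precisely the flatness of this $\mathfrak{g}_k$-valued connection. Indeed, the bracket $[z^{i+1}\partial_z,z^{j+1}\partial_z]=(j-i)z^{i+j+1}\partial_z$ produces exactly the coefficients $(j-i)$. Parallel transport therefore gives a holonomy $\phi:\pi_1(W,x)\to G_k$. Its image under $\pi_k$ is the holonomy of $\nabla$, so $F(\phi)=w_1(L)$.

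\textbf{Isotopy classes versus conjugacy classes.} Gauge transformations by fibre-preserving jets of diffeomorphisms of $L$ that are the identity on $W$ (modulo order $k+1$) act on splittings. Since $F(\phi)$ is fixed, such gauge transformations change $\phi$ by $G_k^0$-conjugation at the basepoint. Conversely, by standard Riemann--Hilbert reasoning, flat $G_k$-connections with the same holonomy modulo conjugation are gauge equivalent. Every representation arises from a flat connection, for instance via the associated bundle $\widetilde W\times_{\pi_1}G_k$-jets, using $\pi_k\circ\phi$ to fix $L$.

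The main obstacle is then to show that gauge equivalence of splittings coincides with isotopy of algebroids. One direction requires lifting jet-level gauge transformations to actual isotopies rel $W$. The other requires showing that two algebroids with the same $k$-jet data are isotopic, i.e.\ that terms of order $\ge k+1$ can be absorbed. We would attack this with a Moser-type argument: the vector field $z^{k+1}\partial_z$ lies in the algebroid, and the time-dependent flow of sections of $A$ vanishing on $W$ integrates the discrepancy. Since Theorem \ref{MCequation} already gives a bijection with splittings, this reduces to checking that isotopies of $L$ fixing $W$ act on splittings through their $k$-jets along the zero section.

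Finally, to obtain a homeomorphism we would check two things. First, the holonomy map is continuous for the quotient topologies, because holonomy depends smoothly on the connection forms. Second, an inverse is given by the continuous construction of a flat connection from $\phi$.
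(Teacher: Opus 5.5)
Your overall route is sound and fits the paper's framework (the paper itself only quotes this result from \cite{foliationpaper} and gives no proof). You read Theorem \ref{MCequation} as flatness of a $\mathfrak{g}_k$-valued connection on the $k$-jet frame bundle $P$ of $L$, take the holonomy $\phi$ with $F(\phi)=w_1(L)$, and apply Riemann--Hilbert for flat $G_k$-bundles. The genuine gap is in the step you yourself call the main obstacle, and the argument you offer there addresses a non-issue. An order $k+1$ HS algebroid on $L$ contains every vector field vanishing to order $k+1$ along $W$, and by Theorem \ref{MCequation} it \emph{is} its splitting. So there are no terms of order $\geq k+1$ to absorb, and ``two algebroids with the same $k$-jet data are isotopic'' is vacuous. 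What must actually be proved is the converse: if $\psi$ is isotopic to the identity rel $W$ and $\psi_*A(\sigma_1)=A(\sigma_2)$, then $\sigma_2=g\cdot\sigma_1$ for a \emph{fibre-preserving} gauge transformation $g$ with $g(x)\in G_k^0$. This is needed both for $A\mapsto[\mathrm{hol}(\sigma_A)]$ to be well defined on isotopy classes and for it to be independent of the tubular neighbourhood, since two tubular embeddings differ by an isotopy rel $W$. Knowing that $\psi$ acts through its $k$-jet along $W$ does not give this, because that $k$-jet is in general not fibre-preserving. It has horizontal components $x\mapsto x+\sum_{j\geq1}z^jv_j(x)$, which are not elements of your gauge group. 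For example, take local coordinates with $\nabla=d$, $k=3$, and $V\in\mathfrak{X}(W)$. The flow of $zV$ fixes $W$ pointwise and is isotopic to the identity rel $W$. Yet to first order in $t$ it changes $\eta_2$ by $t\,d(\eta_1(V))$ up to sign, which is nonzero whenever $\eta_1(V)$ is not locally constant. As written, nothing shows that the $G_k^0$-conjugacy class of the holonomy is an isotopy invariant.

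This can be repaired, and the repair is where your idea of flowing along sections of $A$ actually belongs. Suppose $\psi_t$ is generated by $Y_t$ and $A(\sigma_t)=(\psi_t)_*A(\sigma_1)$. Write $Y_t=S_t+Y_t^v$, where $S_t=\sum_j f_j\,\sigma_t(X_j)$ has the same horizontal projection $d\pi(Y_t)=\sum_j f_jX_j$ (so $f_j|_W=0$), and $Y_t^v$ is vertical and vanishes on $W$. Since $S_t\in\Gamma(A(\sigma_t))$, it does not move $A(\sigma_t)$ infinitesimally. Hence $\sigma_t$ is driven only by the $k$-jet $\xi_t$ of $Y_t^v$, which is an infinitesimal gauge transformation. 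Integrating $\xi_t$ in the gauge group gives $g_t$ with $g_0=\mathrm{id}$ and $g_t\cdot\sigma_1=\sigma_t$, since both solve the same linear ODE, pointwise on $W$. It follows that $g_1(x)\in G_k^0$. Alternatively, view $A$ as the $k$-jet of a codimension-one foliation with leaf $W$ and use that jet holonomy is intrinsic.

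Three smaller points. First, your stated reason for $G_k^0$ is wrong. The fibrewise map $z\mapsto -z$ fixes $W$ pointwise and preserves $F(\phi)$, yet for $W=S^1\subset M=S^1\times\mathbb{R}$ and $k=2$ it sends the class of $z+z^2$ to that of $z-z^2$, a different point of $M_2(M,W)$. The restriction to $G_k^0$ comes from positivity of the normal derivative along an isotopy. For injectivity you must also check that the Riemann--Hilbert gauge transformation with $g(x)\in G_k^0$ lies in the identity component of the gauge group, so that it is realized by an isotopy (e.g.\ by writing it as an exponential, as in Proposition \ref{prop:GaugeInv}). Second, the homeomorphism needs a topology on isotopy classes and a continuous inverse. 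That means an isomorphism $\widetilde W\times_\phi G_k\cong P$ chosen continuously in $\phi$, or an openness argument for the holonomy map; you assert this rather than construct it. Third, the claim about Scott's algebroids is immediate ($\nabla=d$ and $\eta_i=0$ give trivial holonomy), but you do not state it.
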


\subsection{Extension class} \label{extsection} Finally, we recall the fundamental invariant of a hypersurface algebroid $A$ known as the \emph{extension class}. If $A$ has degree $k+1$, then it is a cohomology class $e(A) \in H^2(W, \nu_{W}^{-k})$, where $\nu_{W}$ is the normal bundle of $W$, which is equipped with a flat connection induced by $A$. In \cite[Definition 4.6]{foliationpaper}, $e(A)$ is defined to be the extension class of the restriction $A|_{W}$, which sits in the following short exact sequence of Lie algebroids
\[
0 \to \nu_{W}^{-k} \to A|_{W} \to TW \to 0. 
\]
However, in \cite{foliationpaper} we show that the extension class of $A$ may also be interpreted as the obstruction to \emph{lifting} its holonomy representation $\phi: \pi_{1}(W,x) \to G_{k}$, provided by Theorem \ref{RHthm}, to $G_{k+1}$:
\[
\begin{tikzcd}
& G_{k+1} \arrow[d] \\
\pi_1(W) \arrow[r, "\phi"] \arrow[ur, dashed] & G_k
\end{tikzcd}
\]
Furthermore, if $A(\sigma)$ is an HS algebroid over a line bundle $L \to W$, with splitting $\sigma = \nabla + \sum_{i = 1}^{k -1} \eta_{i}$, then \cite[Corollary 6.8]{foliationpaper} gives the following explicit de Rham form representing the extension class:
\[
e(A) = \frac{1}{2} \sum_{i + j = k} (j-i) \eta_{i} \wedge \eta_{j} = \sum_{i< \frac{k}{2}} (k-2i) \eta_i \wedge \eta_{k-i} \in \Omega^2_W(L^{-k}). 
\]

\section{Examples} \label{sec:Examples}
In this section, we introduce the two running examples of the paper. The first example, taken from \cite{foliationpaper}, is an HS algebroid for which $W$ is a compact mapping torus. The second consists of two closely related families of \emph{universal} hypersurface algebroids which are associated to the groups $G_{k}$ and their nilpotent subgroups $K_{k}$. As the paper progresses, we will further develop these examples and use them to illustrate our main theorems and constructions. 

\subsection{Example: Arnold's cat map}\label{cat} 
Let $\Gamma = \begin{pmatrix} 2 & 1 \\ 1 & 1 \end{pmatrix}$, a matrix with eigenvalues $r^{\pm1} = 1 \pm \varphi^{\pm1}$, where $\varphi = \frac{1 + \sqrt{5}}{2}$ is the golden ratio, and let $\lambda = \log(r)$, which is a non-zero constant. Let $S = \Gamma^{-1} \oplus \Gamma^{-2} \in \mathrm{SL}(\mathbb{Z}^4)$, which induces a diffeomorphism $\phi: \mathbb{T}^4 \to \mathbb{T}^4$ of the $4$-torus. (Recall that $\Gamma$ induces the famous Arnold cat map of the $2$-torus). The pushforward in cohomology $\phi_* : H^{1}(\mathbb{T}^4) \to H^{1}(\mathbb{T}^4)$ induces the decomposition into eigenspaces 
\[
H^{1}(\mathbb{T}^4) = E_{r} \oplus E_{r^{-1}} \oplus E_{r^2} \oplus E_{r^{-2}}, 
\]
where $r^{\pm 1}$ and $r^{\pm 2}$ are the eigenvalues. Let $a_{1}, b_{1}, a_{2}, b_{2}$ be `constant' closed $1$-forms on the torus giving rise to a basis respecting the above eigenspace decomposition. In terms of the standard basis $x_{1}, y_{1}, x_{2}, y_{2}$, we have 
\begin{align*}
a_{1} &= \varphi x_{1} + y_{1}, \qquad b_{1} = -x_1 + \varphi y_1, \\ 
a_{2} &= \varphi x_{2} + y_{2}, \qquad b_{2} = - x_{2} + \varphi y_{2}. 
\end{align*}
Let $W = (\mathbb{T}^4 \times \mathbb{R})/\mathbb{Z}$ be the mapping torus for $\phi$. In other words, it is the quotient space with respect to the $\mathbb{Z}$-action $k \ast (n, \theta) = (\phi^{-k}(n), \theta + k)$. There is a surjective submersion $f: W \to S^1$. Let $\alpha = f^{*}(d\theta)$, where $\theta$ is the coordinate on $S^1$. Using results on the cohomology of mapping tori from Appendix \ref{app:cohomappingtori}, we will construct an order $4$ hypersurface algebroid for $(W, L = W \times \mathbb{R})$. 

Let $\nabla = d - \lambda \alpha$ on $L$. This induces connections $\nabla^{-1} = d + \lambda \alpha$ and $\nabla^{-2} = d + 2 \lambda \alpha$ on $L^{-1}$ and $L^{-2}$ respectively. By Theorem \ref{lambdacohomology} 
\[
H^{1}_{W}(L^{-1}) = H_{W}^{1}(\lambda) \cong E_{r}, \qquad H^{1}_{W}(L^{-2}) = H_{W}^{1}(2\lambda) \cong E_{r^2}, \qquad H^{2}_{W}(L^{-3}) = H_{W}^{2}(3 \lambda) \cong E_{r} \otimes E_{r^2}.
\]
Hence, by Theorem \ref{MCequation}, we obtain an order $4$ HS algebroid $A \Rightarrow L$ by fixing the splitting $\sigma = \nabla + \eta_{1} + \eta_{2}$, where $\eta_{1} = e^{-\lambda \theta} a_1$ and $\eta_{2} = e^{-2 \lambda \theta} a_2$. The extension class is given by the non-trivial element 
\[
e(A) = e^{-3 \lambda \theta} a_{1} \wedge a_{2} \in H^{2}_{W}(L^{-3}).
\] 

\subsection{The truncated Witt algebras} \label{sect:LieAlgcohoE} Let $\mathfrak{g}_{k} = \mathrm{Lie}(G_{k})$ denote the truncated Witt algebra, which is the Lie algebra of $G_{k}$. It has a basis given by the vector fields $z \partial_z, z^2 \partial_z, ..., z^k \partial_z$, whose non-zero Lie brackets are given by 
\[
[z^{i+1} \partial_z, z^{j+1} \partial_z] = (j-i)z^{i + j + 1} \partial_z, \qquad i + j + 1 \leq k.
\] 
Let $\mathfrak{k}_{k} = \mathrm{Lie}(K_{k})$ denote the Lie algebra of $K_{k}$. It contains only the vector fields $z^{i} \partial_z$ with $i \geq 2$. 

In order to discuss the universal hypersurface algebroids, it will be useful to recall a few basic facts about the Chevalley-Eilenberg algebras of these Lie algebras. First, the Chevalley-Eilenberg algebra of $\mathfrak{g}_{k}$ is the cdga given by
\[
\mathrm{CE}(\mathfrak{g}_{k}) = \wedge^{\bullet} \mathfrak{g}_{k}^{*} \cong \mathbb{R}[x_{0}, ..., x_{k-1}].
\]
In the last expression, $x_{i}$ are basis elements of $\mathfrak{g}_{k}^{*}$ dual to $z^{i+1} \partial_{z}$. Hence they have degree $1$ and anti-commute. The differential can be computed as follows 
\[
dx_{r} = \sum_{i = 1}^r i x_{i} \wedge x_{r-i}. 
\]
Since $\mathfrak{k}_{k}$ is a subalgebra of $\mathfrak{g}_{k}$, its Chevalley-Eilenberg algebra is a quotient of $\mathrm{CE}(\mathfrak{g}_{k})$ given by setting $x_{0} = 0$. Both $\mathrm{CE}(\mathfrak{g}_{k})$ and $\mathrm{CE}(\mathfrak{k}_{k})$ are weight-graded by setting $|x_{i}| = -i$, and this grading is respected by the differential $d$ and the product. 

The cohomology of $\mathfrak{k}_{k}$ is closely related to the \emph{twisted} cohomology of $\mathfrak{g}_{k}$. Indeed, let $M_{\lambda} = \mathbb{R}$ be the $\mathfrak{g}_{k}$-module defined by letting $z^{i+1}\partial_z$ act trivially for $i > 0$, and letting $z \partial_{z}$ act as multiplication by $\lambda$. Then $\mathrm{CE}(\mathfrak{g}_{k}, M_\lambda)= \mathrm{CE}(\mathfrak{g}_{k}) \otimes M_{\lambda}$ is the complex computing Lie algebra cohomology valued in $M_{\lambda}$. 

\begin{lemma} \label{CEsubcomplexcalc}
There is a quasi-isomorphism $\mathrm{CE}(\mathfrak{g}_{k}, U_{\lambda}) \cong \mathbb{R}[x_{0}] \otimes \mathrm{CE}(\mathfrak{k}_{k})_{-\lambda}$, where $ \mathrm{CE}(\mathfrak{k}_{k})_{-\lambda}$ denotes the weight $-\lambda$ subcomplex of $\mathrm{CE}(\mathfrak{k}_{k})$. 
\end{lemma}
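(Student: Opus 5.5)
The plan is to use the Euler element $e_0 = z\partial_z$ together with the weight grading and Cartan's homotopy formula. (I read $U_\lambda$ in the statement as the module $M_\lambda$ defined just before it.) Write $\mathrm{CE}(\mathfrak{g}_k) = \mathbb{R}[x_0]\otimes \mathrm{CE}(\mathfrak{k}_k)$ as graded algebras. This holds because $x_0$ is the only generator dual to $e_0$, and $\mathrm{CE}(\mathfrak{k}_k)$ is the quotient obtained by setting $x_0=0$; we identify it with the subalgebra generated by $x_1,\dots,x_{k-1}$.

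First I would split the differential on $\mathrm{CE}(\mathfrak{g}_k,M_\lambda)$ according to its dependence on $x_0$. In the formula $dx_r = \sum_{i=1}^r i\, x_i\wedge x_{r-i}$, the terms involving $x_0$ come only from $i=r$, giving $r\,x_r\wedge x_0$. The remaining terms form exactly the differential $d_{\mathfrak{k}}$ of $\mathrm{CE}(\mathfrak{k}_k)$. We also have $dx_0 = 0$, and the module twist contributes $\lambda\, x_0\wedge(\cdot)$ up to sign. Hence on $\mathbb{R}[x_0]\otimes \mathrm{CE}(\mathfrak{k}_k)$ the total differential has the form
\[
d = 1\otimes d_{\mathfrak{k}} \pm x_0\wedge (N+\lambda),
\]
where $N$ is the derivation acting on a monomial of weight $w$ by $-w$, up to a global sign convention fixed by the computation. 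I would verify the constant carefully on generators, since $N$ is a derivation and agrees with $r$ on $x_r$.

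Next I would use the Lie derivative $L_{e_0} = d\iota_{e_0} + \iota_{e_0} d$ on the twisted complex. It commutes with $d$, and it acts on a homogeneous element of weight $w$ by the scalar $\pm(w+\lambda)$: the coadjoint action of $e_0$ on $x_i$ is by $\mp i$, since $[e_0, z^{i+1}\partial_z] = i z^{i+1}\partial_z$, and $e_0$ acts on $M_\lambda$ by $\lambda$. Since $d$ preserves weight, the complex splits as a direct sum of its weight pieces. On any weight piece with $w\neq -\lambda$, the map $L_{e_0}$ is an invertible scalar and is null-homotopic, so that piece is acyclic. Therefore the inclusion of the weight $-\lambda$ piece is a quasi-isomorphism.

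Finally, on the weight $-\lambda$ piece (note that $x_0$ has weight $0$, so this piece is $\mathbb{R}[x_0]\otimes \mathrm{CE}(\mathfrak{k}_k)_{-\lambda}$), the term $x_0\wedge(N+\lambda)$ vanishes identically. The differential is then just $1\otimes d_{\mathfrak{k}}$, with the Koszul sign from passing $x_0$. This identifies the subcomplex with $\mathbb{R}[x_0]\otimes\mathrm{CE}(\mathfrak{k}_k)_{-\lambda}$ as complexes. The main obstacle is bookkeeping: making sure the sign of the weight operator coming from $dx_r$ exactly cancels the $\lambda$-twist on weight $-\lambda$, rather than on weight $+\lambda$. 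I would settle this by checking directly on $x_r$ and on the constant $1\otimes m\in M_\lambda$.
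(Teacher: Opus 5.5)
The paper states this lemma without proof, so there is nothing to compare against. Your argument (split off $x_0$, decompose by weight, show the weights $w\neq-\lambda$ are acyclic) is the natural one and is correct. There is one sign slip, which you flagged yourself but then carried into the final step.

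With $N$ as you define it ($N(x_r)=r$, so $N=-w$ on weight $w$), the $x_0$-part of $dx_r$ is $r\,x_r\wedge x_0=-N(x_r)\,x_0\wedge x_r$. The module contributes $+\lambda\,x_0\wedge(\cdot)$, so
\[
d(\xi\otimes m)=\bigl(d_{\mathfrak{k}}\xi+x_0\wedge(\lambda-N)\xi\bigr)\otimes m,\qquad \xi\in\mathrm{CE}(\mathfrak{k}_k).
\]
No global sign turns $N+\lambda$ into $\lambda-N$. On weight $-\lambda$ your operator $N+\lambda$ equals $2\lambda$, so your last paragraph is false as literally written. With $\lambda-N$, which acts by $w+\lambda$, it is correct.

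The direct check you propose confirms this. With $e_j=z^{j+1}\partial_z$ and $j\geq1$, one gets $d(x_j\otimes m)(e_0,e_j)=e_0\cdot m-x_j([e_0,e_j])\,m=(\lambda-j)\,m$. Your Cartan step agrees and has no sign ambiguity either: $L_{e_0}x_i=-i\,x_i$, so $L_{e_0}$ acts on the weight-$w$ piece by exactly $w+\lambda$.

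Once the differential is written this way, the homotopy $\iota_{e_0}$ is not even needed. The weight-$w$ piece is $\mathrm{CE}(\mathfrak{k}_k)_w\oplus x_0\,\mathrm{CE}(\mathfrak{k}_k)_w$, with differential $\xi+x_0\eta\mapsto d_{\mathfrak{k}}\xi+x_0\bigl((w+\lambda)\xi-d_{\mathfrak{k}}\eta\bigr)$. This is the cone of multiplication by $w+\lambda$ on $\mathrm{CE}(\mathfrak{k}_k)_w$. It is acyclic for $w\neq-\lambda$, since a cocycle $\xi+x_0\eta$ equals $d(\eta/(w+\lambda))$, and for $w=-\lambda$ it is $\mathbb{R}[x_0]\otimes\mathrm{CE}(\mathfrak{k}_k)_{-\lambda}$.

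Your Cartan-formula route is the elementary form of the Hochschild--Serre argument for the ideal $\mathfrak{k}_k\subset\mathfrak{g}_k$ with $e_0$ acting semisimply. It is the version that generalizes, while the explicit cone is shorter here. Since the weights of $\mathrm{CE}(\mathfrak{k}_k)$ are non-positive integers, both show the complex is acyclic unless $\lambda\in\mathbb{Z}_{\geq0}$.
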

Differentiating the morphism $p_{k+1} : G_{k+1} \to G_{k}$ defines an exact sequence of Lie algebras 
\begin{equation} \label{eq:algebraseq}
0 \to M_{k} \to \mathfrak{g}_{k+1} \to \mathfrak{g}_{k} \to 0,
\end{equation}
and this has extension class 
\[
e_k = \sum_{i = 1}^{k-1} i x_{k-i} \wedge x_{i} \in H^2(\gfrac_k,M_k). 
\]
Under the quasi-isomorphism of Lemma \ref{CEsubcomplexcalc}, this is sent to a weight $-k$ cohomology class $k_{k} \in H^2(\mathfrak{k}_{k})_{-k}$. This corresponds to the central extension of $\mathfrak{k}_{k}$ given by differentiating the restriction $p_{k+1} : K_{k+1} \to K_{k}$
\begin{equation} \label{eq:algebraseq2}
0 \to \mathbb{R} \to \mathfrak{k}_{k+1} \to \mathfrak{k}_{k} \to 0.
\end{equation}
The cohomology class $k_{k}$ is non-vanishing if and only if $k > 2$.

\subsection{The universal hypersurface algebroids } \label{sec:universalalg}
We now construct a family of universal HS Lie algebroids $U_k$ associated to the groups $K_{k}$. Recall that $K_{k}$ consists of truncated polynomials of the form 
\[
f(z) = z + a_1 z^2 + ... + a_{k-1} z^k.
\]
The coefficients $(a_{1}, ..., a_{k-1})$ give coordinates on the group, identifying it as a smooth manifold with $\mathbb{R}^{k-1}$. Let $K_{k}(\mathbb{Z}) \subset K_{k}$ denote the subgroup of polynomials with integer coefficients. The quotient $X_{k} = K_{k}/K_{k}(\mathbb{Z})$ is a compact manifold. In fact, since $K_{k}$ is contractible, $X_{k}$ is a model for the classifying space $BK_{k}(\mathbb{Z})$. It follows that $\pi_1(X_{k}) \cong K_{k}(\mathbb{Z})$. To see this identification explicitly, note that the fundamental groupoid of $X_{k}$ is given by $\Pi_1(X_{k}) = \mathrm{Pair}(K_{k})/K_{k}(\mathbb{Z}) = (K_{k} \times K_{k})/K_{k}(\mathbb{Z})$, where $K_{k}(\mathbb{Z})$ acts by right multiplication on both factors. There is a well-defined groupoid homomorphism 
\[
H : \Pi_1(X_{k}) \to K_{k}, \qquad (f_1, f_2) \mapsto f_{1} f_{2}^{-1}
\]
given by division. Restricting to the fundamental group based at the coset of $\mathsf{id}$ gives the isomorphism 
\begin{equation}\label{eq:repUk}
h : \pi_1(X_{k}, \mathsf{id}) \to K_{k}(\mathbb{Z}).
\end{equation}
By Theorem \ref{RHthm} this gives rise to a hypersurface algebroid.
\begin{definition}[$K_{k}(\mathbb{Z})$-universal algebroid] 
The \emph{$K_{k}(\mathbb{Z})$-universal algebroid}, denoted $U_k \Rightarrow X_k \times \mathbb{R},$ is the order $k+1$ hypersurface algebroid for $(X_k, X_k \times \mathbb{R})$ associated to the representation \eqref{eq:repUk}.
\end{definition}
Note that the flat connection underlying $U_{k}$ is the trivial one. We can obtain an explicit description of $U_k$ in terms of a splitting by differentiating the homormorphism $H$. Doing so, we find that 
\[
{\rm Lie}(H) = \theta^{R} \in \Omega_{X_{k}}^{\bullet} \otimes \mathfrak{k}_{k},
\] 
the right-invariant Maurer-Cartan form for $K_{k}$, descended to $X_{k}$. The Chevalley-Eilenberg complex of $\mathfrak{k}_{k}$ may be viewed as the differential subalgebra of the de Rham complex $\Omega^{\bullet}_{K_{k}}$ consisting of right-invariant differential forms: $\mathrm{CE}(\mathfrak{k}_{k}) \cong (\Omega^{\bullet}_{K_{k}})^{R}$. Since $X_{k}$ is defined by quotienting out the right action of $K_{k}(\mathbb{Z})$, the right invariant forms descend to $X_{k}$ and we obtain an injective map of cdgas 
\[
\mathrm{CE}(\mathfrak{k}_{k}) = \mathbb{R}[x_{1}, ..., x_{k-1}] \to \Omega^{\bullet}_{X_{k}}. 
\]
By Nomizu's theorem \cite{nomizu1954cohomology}, this is a quasi-isomorphism. Expanding $\theta^R$ in terms of the dual bases for $\mathfrak{k}_{k}$ described in Section \ref{sect:LieAlgcohoE}, we find 
\[
\theta^{R} = \sum_{i = 1}^{k-1} x_{i} \otimes z^{i+1} \partial_{z}.
\] 
Therefore, in terms of the description of Theorem \ref{MCequation}, $U_{k}$ is defined by the splitting $\sigma = d + \sum_{i = 1}^{k-1} \eta_{i}$, where $\eta_{i} = x_{i}$. Let $V_{r} \in \mathfrak{X}(X_{k})^R$ be the right invariant vector field on $X_{k}$ corresponding to $z^{r+1}\partial_{z}$. We conclude:
\begin{proposition}
The universal algebroid $U_k \Rightarrow X_k\times \rr$ is determined by the splitting $\sigma = d + \sum_{i=1}^{k-1}x_i$. It has a global basis of sections given by
\[
V_{1} + t^2 \partial_t, \qquad V_{2} + t^3 \partial_{t}, \qquad ... \qquad  V_{k-1} + t^{k} \partial_{t}, \qquad t^{k+1} \partial_{t},
\]
where $t$ is the linear coordinate on $\mathbb{R}$. Moreover, its extension class is given by $e(U_k) = k_k \in H^2(X_{k}) \cong H^2(\mathfrak{k}_{k})$, the extension class of \eqref{eq:algebraseq2}. Therefore, $e(U_{k})$ is non-vanishing if and only if $k>2$. 
\end{proposition}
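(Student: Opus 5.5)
The proof has three parts: identify the splitting, evaluate the vector fields \eqref{algebroidsections}, and compute the extension class.

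\textbf{The splitting.} Here $L = X_k \times \mathbb{R}$ is trivial with trivial flat connection, since the representation \eqref{eq:repUk} lands in $K_k = \ker(\pi_k)$. Under the Riemann--Hilbert correspondence of Theorem \ref{RHthm} (as constructed in \cite{foliationpaper}), the splitting forms $\eta_i$ of the algebroid attached to a representation are the components of the derivative of the groupoid homomorphism $H$, expanded in the basis $z^{i+1}\partial_z$ of $\mathfrak{k}_k$. We have ${\rm Lie}(H) = \theta^R = \sum_{i=1}^{k-1} x_i \otimes z^{i+1}\partial_z$. Hence $\eta_i = x_i$, where we view $x_i$ as a right-invariant $1$-form descended to $X_k$. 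This also gives an independent check of the Maurer--Cartan equation of Theorem \ref{MCequation}. The Chevalley--Eilenberg differential $dx_r = \sum_i i\, x_i\wedge x_{r-i}$ (with $x_0=0$) can be rewritten as $-\tfrac12\sum_{i+j=r}(j-i)x_i\wedge x_j$ after antisymmetrizing. Since $\nabla = d$, this is exactly the equation $d\eta_r + \tfrac12\sum_{i+j=r}(j-i)\eta_i\wedge\eta_j=0$. I would verify this sign bookkeeping carefully, since the conventions for $\theta^R$ versus $\theta^L$ could flip a sign. I expect this identification of splitting data with the Maurer--Cartan form, including consistent signs, to be the main obstacle.

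\textbf{The global basis.} Apply \eqref{algebroidsections} with the global frame $V_1,\dots,V_{k-1}$ of $TX_k$, which is parallelizable because the $V_r$ are right-invariant. On the trivial bundle, $s=t^{-r}$ pairs with a section of $L^{-r}$ as a function of fibrewise degree $r$, so $1\in\Gamma(L^{-r})$ corresponds to the function $t^r$, and the Euler field is $\mathcal{E} = t\partial_t$. Since $x_i(V_r)=\delta_{ir}$, we get
\[
\sigma(V_r) = V_r + \sum_i x_i(V_r)\, t^{i}\cdot t\partial_t = V_r + t^{r+1}\partial_t .
\]
The vertical generator $L^{-k}\mathcal{E}$ is spanned by $t^{k}\cdot t\partial_t = t^{k+1}\partial_t$. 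By the description of local bases after \eqref{algebroidsections}, these $k$ sections form a global basis.

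\textbf{The extension class.} Use the formula $e(A) = \tfrac12\sum_{i+j=k}(j-i)\eta_i\wedge\eta_j$ from Section \ref{extsection}, with $\eta_i = x_i$. After reordering and antisymmetrizing, this is $\sum_{i=1}^{k-1} i\, x_{k-i}\wedge x_i$, i.e.\ the cocycle $e_k$ of \eqref{eq:algebraseq}. Setting $x_0=0$ sends it to the weight $-k$ class $k_k\in H^2(\mathfrak{k}_k)$ of \eqref{eq:algebraseq2}. As before, an overall sign should be checked, but it does not affect vanishing. By Nomizu's theorem, $\mathrm{CE}(\mathfrak{k}_k)\to\Omega^\bullet_{X_k}$ is a quasi-isomorphism, so $e(U_k)$ corresponds to $k_k$ under $H^2(X_k)\cong H^2(\mathfrak{k}_k)$. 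It is therefore non-zero exactly when $k_k\neq 0$, which is the case if and only if $k>2$, as recalled at the end of Section \ref{sect:LieAlgcohoE}.
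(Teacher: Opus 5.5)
Your proposal is correct and follows essentially the same route as the paper: differentiate the groupoid homomorphism $H$ to obtain $\theta^R=\sum_{i=1}^{k-1} x_i\otimes z^{i+1}\partial_z$, read off $\eta_i=x_i$ with trivial connection, evaluate $\sigma(V_r)=V_r+t^{r+1}\partial_t$, and identify $e(U_k)=\tfrac12\sum_{i+j=k}(j-i)x_i\wedge x_j=\sum_{i=1}^{k-1} i\,x_{k-i}\wedge x_i$ with $k_k$ via Nomizu's theorem. The sign bookkeeping you flagged is consistent with the paper's stated CE differential, the Maurer--Cartan equation of Theorem \ref{MCequation}, and the extension class formula.
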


\begin{example}[Heisenberg]
When $k = 4$, $K_{4}$ is isomorphic to the Heisenberg group
\begin{equation*}
H(\mathbb{R}) = \lbrace \begin{pmatrix} 1 & u & w \\ 0 & 1 & v \\ 0 & 0 & 1 \end{pmatrix} \ | \ u, v, w \in \mathbb{R} \rbrace.
\end{equation*}
with the isomorphism given by 
\[
\varphi: H(\mathbb{R}) \to K_4, \qquad (u,v,w) \mapsto z + uz^2 + (v+u^2)z^3 + (u^3 + 3uv - w)z^4.
\]
As a result, $X_{4}$ is diffeomorphic to the space $H(\mathbb{R})/H(\mathbb{Z})$. Using $u,v,w$ as coordinates on $X_{4}$, the components of the splitting are given by 
\[
x_1 = du, \qquad x_2 = dv, \qquad x_3 = -dw + v du. 
\]
Therefore, the algebroid $U_4 \Rightarrow X_4 \times \rr$ has global basis 
\[
\partial_u + v \partial_w + t^2 \partial_t, \qquad \partial_v + t^3 \partial_t, \qquad - \partial_w + t^4 \partial_t, \qquad t^5 \partial_t.
\]
Finally, the extension class is the non-zero class $2x_1 \wedge x_3 = -2 du \wedge dw$.
\end{example}
The following proposition justifies calling $U_{k}$ universal. 
\begin{proposition} \label{universal1}
Let $W \subset M$ be a closed hypersurface and let $A \Rightarrow M$ be a hypersurface algebroid for $(W, M)$ of order $k + 1$, for $k \geq 1$. Let $\rho: \pi_{1}(W, x) \to G_{k}$ be a representation classifying $A$ via Theorem \ref{RHthm}. If $\rho$ is valued in $K_{k}(\mathbb{Z})$, then there exists an open neighbourhood $U \subseteq M$ of $W$ and a smooth map of pairs $F: (U, W) \to (X_{k} \times \mathbb{R}, X_{k})$ such that $A|_{U} = F^{!}U_{k}$, the pullback Lie algebroid. 
\end{proposition}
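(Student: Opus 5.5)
Since $\rho$ takes values in $K_{k}(\mathbb{Z})$, its composition with $\pi_k$ is trivial, so the normal bundle $\nu_W$ is trivial with trivial holonomy. The idea is to build a map $f: W \to X_{k}$ that pulls back the universal holonomy, thicken it to a map of neighbourhoods, and then compare algebroids through Theorem \ref{RHthm}.

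First, choose a classifying map. Since $X_{k}$ is a $K(K_{k}(\mathbb{Z}),1)$, a standard obstruction-theory argument (or the flat-bundle construction below) gives a smooth map $f : W \to X_{k}$ with $f(x) = [\mathsf{id}]$ and $h \circ f_* = \rho$ on $\pi_1(W,x)$, using the identification \eqref{eq:repUk}. Concretely, $\rho$ defines a flat principal $K_{k}(\mathbb{Z})$-bundle $\tilde W \times_{\rho} K_{k} \to W$ with fibre $K_k$ and a right $K_k(\mathbb{Z})$-action. Its associated bundle with fibre $X_{k}$ is flat, and because $K_{k}$ is contractible it admits a section. We then modify this section to an honest map into $X_{k}$ by trivialising the corresponding $K_k$-bundle, which is possible since $K_k$ is contractible. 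Alternatively, we simply pick a $\rho$-equivariant smooth map $\tilde W \to K_{k}$; such a map exists because $K_{k}\cong \mathbb{R}^{k-1}$, so equivariant maps are sections of a bundle with contractible fibres.

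Next, extend the map to a neighbourhood. Choose a tubular neighbourhood $U \cong W \times \mathbb{R}$, which is possible since $\nu_W$ is trivial, and set $F = f \times \mathrm{id}_{\mathbb{R}} : U \to X_{k} \times \mathbb{R}$. This is a map of pairs and is transverse to $X_{k}$, so the pullback $F^{!}U_{k}$ is a well-defined Lie algebroid. In the local basis, $F^{!}U_{k}$ is spanned by $\tilde X + \sum_i x_i(f_*\tilde X)\, t^{i+1}\partial_t$ together with $t^{k+1}\partial_t$. It is therefore exactly the HS algebroid $A(\sigma')$ with splitting $\sigma' = d + \sum_i f^*x_i$. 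The Maurer–Cartan equation of Theorem \ref{MCequation} holds automatically because $f^*$ is a dga map. Hence $F^{!}U_k$ is an order $k+1$ HS algebroid for $(W,U)$.

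To identify holonomies, note that the holonomy of $A(\sigma')$ is obtained by integrating $f^*\theta^R$ along loops. By naturality this equals $H \circ \Pi_1(f)$ restricted to $\pi_1(W,x)$, which is $h\circ f_* = \rho$. Theorem \ref{RHthm} then gives an isotopy, fixing $W$, between $A|_U$ and $F^{!}U_k$: there is a diffeomorphism $\psi$ of a neighbourhood of $W$, isotopic to the identity and equal to the identity on $W$, with $\psi^{!}A = F^{!}U_k$. We then replace $F$ by $F\circ\psi^{-1}$ on a smaller neighbourhood $U$; this is still a map of pairs because $\psi|_W = \mathrm{id}$, and it satisfies $A|_U = (F\circ\psi^{-1})^{!}U_k$.

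The main obstacle is this last step. Theorem \ref{RHthm} is stated for isotopy classes on all of $M$, so I must check that it applies to germs on the neighbourhood $U$, with representations equal rather than merely conjugate under $G_k^0$. If $\rho$ is only conjugate to $h\circ f_*$ by some $g \in G_k^0$, I would absorb $g$ by composing $F$ with the fibrewise action of a polynomial diffeomorphism in the $t$-direction. This requires checking that such transverse reparametrisations preserve $U_k$ up to pullback, which I expect to follow from right-invariance of $\theta^R$.
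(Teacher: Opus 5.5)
The paper states Proposition~\ref{universal1} without proof, so there is no argument to match line by line. Your proof is correct and is the natural argument given how $U_k$ is set up in Section~\ref{sec:universalalg}. Its steps are: a based classifying map $f\colon W\to X_k$ with $h\circ f_*=\rho$ (your equivariant-map construction is the clean way to get it); the product extension $F=f\times\mathrm{id}$ on a tubular neighbourhood $U\cong W\times\mathbb{R}$; the identification $F^{!}U_k=A(d+\sum_i f^*x_i)$, whose holonomy is $h\circ f_*=\rho$ precisely because $\mathrm{Lie}(H)=\theta^R$; and then Theorem~\ref{RHthm}, followed by composing $F$ with the inverse of the resulting isotopy.

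The obstacle you flag is not a real one. Theorem~\ref{RHthm} applies verbatim to the pair $(W,U)$, since $W$ is a closed hypersurface of $U$. The holonomy of $A|_U$ coincides with that of $A$, because it only depends on the germ along $W$. The theorem also already classifies up to $G_k^0$-conjugacy, so no fibrewise reparametrization is needed.

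The one convention to respect is that the coordinate $t$ be positively cooriented with respect to the normalization used to define $\rho$. With the opposite choice you obtain $j_k(-1)\rho\, j_k(-1)^{-1}$ (coefficients $a_i\mapsto(-1)^ia_i$), which need not be $G_k^0$-conjugate to $\rho$ but is still $K_k(\mathbb{Z})$-valued. Replacing $t$ by $-t$ fixes it.
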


\subsection{The equivariant universal algebroids} \label{sec:equivuniversalalg}
Next, we describe a family of hypersurface algebroids over $G_{k} \times \mathbb{R}$ which are even more fundamental than the $U_{k}$. We obtain their description by directly generalizing the Maurer-Cartan description of $U_{k}$. 

Let $V_{0}, V_{1}, ..., V_{k-1} \in \mathfrak{X}(G_{k})^{R}$ be the right invariant vector fields on $G_{k}$ associated to the basis elements $z^{i + 1}\partial_z$ of $\mathfrak{g}_{k}$, and let $x_{i} \in \Omega^{1}(G_{k})^{R}$, for $i = 0, ..., k-1$, be the dual right invariant $1$-forms. These induce a morphism of cdgas $\mathrm{CE}(\mathfrak{g}_{k}) \to \Omega^{\bullet}(G_{k})$ which identifies the Chevalley-Eilenberg complex with the subcomplex of right invariant differential forms. 

\begin{definition}[$G_{k}$-equivariant universal algebroid]
The \emph{$G_{k}$-equivariant universal algebroid}, denoted $E_{k} \Rightarrow G_{k} \times \mathbb{R}$, is the order $(k+1)$ hypersurface algebroid for $(G_{k}, G_{k} \times \mathbb{R})$ defined by the splitting $\sigma = d + \sum_{i = 0}^{k-1} x_{i}$. 
\end{definition}
The algebroid $E_{k}$ has a natural global basis of vector fields given by 
\[
V_{0} + t \partial_t, \qquad V_{1} + t^2 \partial_t, \qquad  ... \qquad  V_{k-1} + t^{k} \partial_{t}, \qquad t^{k+1} \partial_{t}.
\]
Since both connected components of $G_{k}$ are contractible, this algebroid is isotopic to a Scott $b^{k+1}$-algebroid. However, $E_k$ carries a non-trivial equivariant structure. Indeed, $G_{k} \times \mathbb{R}$ admits both a right $G_{k}$-action given by $(g, t) \ast h = (gh, t)$, and a left $\mathbb{R}^*$-action given by $r \ast (g,t) = (rg, rt)$. These actions commute and preserve $E_{k}$. 

\begin{lemma} \label{equivactionEk}
The algebroid $E_{k}$ admits commuting actions of $G_{k}$ and $\mathbb{R}^*$ which lift their actions on $G_{k} \times \mathbb{R}$. 
\end{lemma}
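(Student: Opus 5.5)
To prove Lemma \ref{equivactionEk}, I would show directly that both actions on $G_k \times \mathbb{R}$ preserve the subsheaf $E_k \subset T(G_k\times\mathbb{R})$. For an HS algebroid, which is a subsheaf of vector fields closed under bracket, the action lifts to the algebroid as soon as the pushforward of vector fields maps $E_k$ to itself. The lift is then simply the restriction of the tangent action $d\Phi$, well defined on $E_k$ because $\rho$ is injective on sections. The two lifts commute because the underlying actions on $G_k\times\mathbb{R}$ commute and the lifts are restrictions of the tangent lifts. So the whole proof reduces to checking invariance of the global basis
\[
V_0 + t\partial_t,\ V_1+t^2\partial_t,\ \dots,\ V_{k-1}+t^k\partial_t,\ t^{k+1}\partial_t,
\]
up to $C^\infty$-linear combinations.

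\textbf{Right $G_k$-action.} The map $(g,t)\mapsto (gh,t)$ preserves right-invariant vector fields $V_i$ on $G_k$ and fixes $t$. Every basis element is therefore mapped to itself, and $E_k$ is preserved.

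\textbf{Left $\mathbb{R}^*$-action.} Write $L_r$ for left multiplication by $j_k(r)$, the linear polynomial $rz$. The field $t^{m}\partial_t$ pushes forward under $t\mapsto rt$ to $r^{1-m}t^{m}\partial_t$. For the right-invariant field $V_i$, I would use $(L_r)_* V_i = (\mathrm{Ad}_{r}(z^{i+1}\partial_z))^R$, because left translation maps right-invariant fields to right-invariant fields via the adjoint action. Then I would compute $\mathrm{Ad}_{j_k(r)}$ on $\mathfrak{g}_k$: conjugating the vector field $z^{i+1}\partial_z$ by the scaling $z\mapsto rz$ multiplies it by $r^{\mp i}$. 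Hence $(L_r)_*V_i = r^{-i}V_i$. Combined with $t^{i+1}\partial_t \mapsto r^{-i}t^{i+1}\partial_t$, each generator $V_i + t^{i+1}\partial_t$ is sent to $r^{-i}(V_i + t^{i+1}\partial_t)$, and $t^{k+1}\partial_t\mapsto r^{-k}t^{k+1}\partial_t$. So $E_k$ is preserved.

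\textbf{Main obstacle.} The only delicate point is the sign convention: the exponent of $r$ from $\mathrm{Ad}$ must match the one coming from $t\mapsto rt$. I would check this by an explicit computation with composition conventions, namely $r\cdot g = (rz)\circ g$ versus $g\circ(rz)$, together with how $V_i$ is defined as the right-invariant extension. If a mismatch appeared, the action would need to be read as $r\ast(g,t)=(j_k(r)g,rt)$ with the matching composition order; the statement's compatibility suggests the convention is consistent.

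An alternative, convention-light check is via the splitting: the action should preserve $\sigma=d+\sum x_i$ after accounting for how $\mathbb{R}^*$ acts on $L=G_k\times\mathbb{R}$. Under the bijection of Theorem \ref{MCequation}, $x_i$ corresponds to a section of $L^{-i}$, and one verifies $L_r^*x_i=r^{i}x_i$ exactly as required by fibrewise degree.
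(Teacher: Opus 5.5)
Your proposal is correct and is essentially the paper's proof: right-invariance of the $V_i$ means the $G_k$-action fixes every generator. Under $r\ast(g,t)=(rg,rt)$, each generator $V_i+t^{i+1}\partial_t$ is rescaled by $r^{-i}$. The sign you flagged is settled by $(rz)\circ(z+sz^{i+1})\circ(z/r)=z+sr^{-i}z^{i+1}$, so $\mathrm{Ad}_{rz}$ acts by $r^{-i}$ and matches $t\mapsto rt$.

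One small slip in your alternative check: since $x_i$ is dual to $V_i$ and $(L_r)_*V_i=r^{-i}V_i$, one gets $L_r^*x_i=r^{-i}x_i$, not $r^{i}x_i$. This is consistent with the weight $|x_i|=-i$ and with what invariance of $x_i\otimes t^i$ under $t\mapsto rt$ requires.
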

\begin{proof}
Invariance under the right action follows by construction. Invariance under the left action follows because the vector field $V_{i} +t^{i+1} \partial_t$ pushes forward under the action of $r \in \mathbb{R}^*$ to $r^{-i}( V_{i} +t^{i+1} \partial_t)$. 
\end{proof}

Note that the flat line bundle underlying $E_{k}$ is the trivial bundle $L = G_{k} \times \mathbb{R}$ equipped with the connection $\nabla = d - x_{0}$. This flat bundle and its tensor powers admit a $G_{k}$-equivariant structure. The extension class of $E_{k}$ is represented by the following twisted differential form 
\[
e(E_{k}) = \sum_{i = 1}^{k-1} i x_{k-i} \wedge x_{i} \in \Omega_{G_{k}}^{2}(L^{-k}). 
\]
This form is $G_{k}$-invariant: $e(E_{k}) \in \Omega_{G_{k}}^{2}(L^{-k})^{G_{k}}$, and the $G_{k}$-invariant subcomplex is identified with the twisted Chevalley-Eilenberg complex: $\Omega_{G_{k}}^{\bullet}(L^{-k})^{G_{k}} \cong \mathrm{CE}(\mathfrak{g}_{k}, M_{k})$. Under this identification, $e(E_{k})$ coincides with the extension class $e_k$ of \eqref{eq:algebraseq}. Additionally, $\mathrm{CE}(\mathfrak{g}_{k}, M_{k})$ was shown in Lemma \ref{CEsubcomplexcalc} to be quasi-isomorphic to $\mathbb{R}[x_{0}] \otimes \mathrm{CE}(\mathfrak{k}_{k})_{-k}$. Under these (quasi)-isomorphisms, the extension class $e(E_{k})$ also coincides with $e(U_{k})$. Hence, although $e(E_{k})$ is trivial in twisted de Rham cohomology, it is non-trivial in the invariant cohomology for $k \geq 3$.

\section{Lie algebroid cohomology}\label{sec:algcoh}
The degeneration of geometric structures along a hypersurface $W \subset M$ is encoded in our formalism as differential forms on an order $k+1$ HS algebroid $A$. To motivate this approach, we use the local normal form of Proposition \ref{normalformprop}. In the local form coordinates $(z, x_2, ..., x_{k})$, a differential $A$-form $\omega \in \Omega_{A}^{r}$ may be decomposed in the following way
\begin{equation} \label{localdecomp}
\omega = \sum_{i = 0}^{k} \frac{dz}{z^{i+1}} \wedge \alpha_{i} + \beta,
\end{equation}
with $\alpha_{i} \in \Omega_{W}^{r-1}$ and $\beta \in \Omega^{r}_M$. We therefore see that $A$-forms model a certain class of differential forms on $M$ with poles along $W$. In this decomposition, the $\alpha_{i}$ are the \emph{`higher residues'} (or `principal part') of $\omega$, whereas $\beta$ is the smooth part of the form. However, since the normal form coordinates are only defined locally and a change of coordinates does not preserve the above decomposition, these higher residues do not have a global meaning. The purpose of the present section is to develop a global analogue of the decomposition \eqref{localdecomp}, which will allow us to make global sense of the higher residues of an $A$-form. This will lead to a generalization of the cohomology results for the $b$-tangent bundle of Mazzeo-Melrose \cite[Proposition 2.49]{MR1348401} and the $b^k$-tangent bundles of Scott \cite[Proposition 4.3]{MR3523250}.

\subsection{Local cohomology} \label{sec:localcoho}
Let $\pi: L \to W$ be a real line bundle and let $\sigma = \nabla + \sum_{i = 1}^{k-1} \eta_{i} : TW \to \at^{k}(L)$ be a splitting. By Theorem \ref{MCequation} this determines a hypersurface algebroid $A(\sigma) \Rightarrow \mathrm{tot}(L)$. In this section, we give a detailed study of the algebroid de Rham complex $\Omega_{A(\sigma)}^{\bullet}$. 

We assume that $k \geq 1$, so that $L$ is equipped with a flat connection $\nabla$. Recall that $\mathcal{E} \in \mathfrak{X}(L)$ denotes the Euler vector field on $L$. Using the connection $\nabla$, we can define a dual $1$-form $\epsilon \in \Omega^1_{L \setminus W}$ by requiring that 
\[
\epsilon(\nabla_{X}) = 0 \text{ for all } X \in \mathfrak{X}(W), \qquad \epsilon(\mathcal{E}) = 1. 
\]
In what follows, we will often identify a differential form $\beta \in \Omega_{W}^{j}$ with its pullback $\pi^*(\beta) \in \Omega_{L}^{j}$. We also view sections $s \in L^{-i}$ as functions on $L \setminus W$ with fibrewise degree $i$. 

\begin{lemma} \label{differentialequation}
Let $s \in L^{-i}$, viewed as a function on $L \setminus W$. Then $ds = \nabla(s) + is\epsilon$, where $\nabla(s) \in \Omega^1_{W}(L^{-i})$. 
\end{lemma}

Given a twisted differential form $\eta \in \Omega^j_{W}(L^{-i})$, there is a freedom to view the $L$-factors either as twisting line bundles, or as functions on $L \setminus W$. We write 
\[
\eta_{r} \in \Omega^j_{W}(L^{-i + r}) \otimes L^{-r}
\]
when we view $\eta$ as the $\pi^*(L)^{-r}$-twisted differential form on $L \setminus W$ with fibrewise degree $i - r$. A similar freedom is also available for sections of $\Omega_L^{\bullet}(\pi^*(L^{-i}))$.

The following result identifies the global analogues of the singular $1$-forms $\frac{dz}{z^{r+1}}$ appearing in \eqref{localdecomp}.
\begin{lemma}
For $0 \leq r \leq k$, the following expression defines a twisted algebroid $1$-form 
\[
\tau_{r} = \epsilon_{r} - \sum_{i = 1}^{r-1} \eta_{i,r} \in \Omega^1_{A(\sigma)} \otimes \pi^*L^{-r}.
\]
Viewed as a twisted differential form on $L$, it has a pole of order $r+1$ along $W$. 
\end{lemma}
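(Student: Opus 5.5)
The plan is to check directly that $\tau_r$ pairs smoothly with a local basis of $A(\sigma)$. Since $\Omega^1_{A(\sigma)}$ is the dual of $A(\sigma)$, and $\rho$ is an isomorphism on $L\setminus W$, a twisted $1$-form on $L\setminus W$ extends to a smooth algebroid form exactly when its pairings with local generators of $A(\sigma)$ extend to smooth sections of $\pi^*L^{-r}$. I will use the local basis from \eqref{algebroidsections}: $\sigma(X_1),\dots,\sigma(X_n)$ together with $s\mathcal{E}$ for a non-vanishing $s\in L^{-k}$. The key device is fibrewise degree. A section of $L^{-i}$, viewed as a function, has degree $i$. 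Writing $\epsilon_r$ means $\epsilon$ regarded as a $\pi^*L^{-r}$-twisted form of fibrewise degree $-r$, i.e. $\epsilon$ multiplied by the inverse of the tautological section to the power $r$. Likewise $\eta_{i,r}$ has fibrewise degree $i-r$. A $\pi^*L^{-r}$-valued function of nonnegative fibrewise degree that is polynomial along the fibres is smooth on all of $L$. So the task reduces to a degree count.

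First, evaluate on the vertical generator. By definition of $\epsilon$ we have $\epsilon(\mathcal{E})=1$, and each $\eta_{i,r}$ is pulled back from $W$, so it kills $\mathcal{E}$. Hence $\tau_r(s\mathcal{E})=\epsilon_r(s\mathcal{E})$, which is $s$ viewed with fibrewise degree $k-r$. This is $\geq 0$ precisely because $r\leq k$, so the pairing is smooth.

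Next, evaluate on $\sigma(X)=\nabla_X+\sum_{i=1}^{k-1}\eta_i(X)\mathcal{E}$. We have $\epsilon(\nabla_X)=0$, and $\eta_{i,r}(\nabla_X)=\eta_{i,r}(X)$ because $\nabla_X$ projects to $X$. Therefore
\[
\tau_r(\sigma(X))=\sum_{i=1}^{k-1}\eta_{i,r}(X)-\sum_{i=1}^{r-1}\eta_{i,r}(X)=\sum_{i=r}^{k-1}\eta_{i,r}(X).
\]
Every term here has fibrewise degree $i-r\geq 0$, so it is smooth. The subtraction in the definition of $\tau_r$ exists exactly to cancel the negative-degree terms $i<r$. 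I expect the main care to go into making this bookkeeping of twistings versus fibrewise degrees precise; Lemma \ref{differentialequation} supplies the needed identities.

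For the pole order, work locally. Choose a $\nabla$-flat local frame of $L$ with fibre coordinate $z$, so that $\nabla=d$ and $\mathcal{E}=z\partial_z$. Then $\epsilon=dz/z$, and $\epsilon_r$ becomes $dz/z^{r+1}$ after trivializing $L^{-r}$ with the flat frame. The correction terms $\eta_{i,r}=z^{i-r}\eta_i$ have pole order at most $r-1<r+1$. Hence $\tau_r$ has a pole of order exactly $r+1$ along $W$.
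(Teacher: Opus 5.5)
Your proposal is correct and follows the paper's argument. The paper also checks that $\tau_r$ pairs with the local generators $\sigma(X)$ and $s\mathcal{E}$ of $A(\sigma)$ to give smooth sections of $\pi^*L^{-r}$; your degree count ($\tau_r(s\mathcal{E})$ of fibrewise degree $k-r$, and $\tau_r(\sigma(X))=\sum_{i=r}^{k-1}\eta_{i,r}(X)$) is the computation the paper leaves implicit, and your flat-frame check of the pole order $r+1$ fills in the second assertion, which the paper does not spell out.
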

\begin{proof}
This follows by showing that $\tau_r$ pairs with the generating sections of $A(\sigma)$ --- namely, the sections $\sigma(X)$ from \eqref{algebroidsections} and the vertical sections $s\mathcal{E}$ for $s \in L^{-k}$ --- to give smooth sections of $\pi^*L^{-r}$ over $L$.
\end{proof}

We note the following useful corollary, which follows immediately from the description of local bases of $A(\sigma)$ in Section \ref{sec:HSalg}. 
\begin{corollary} \label{localbasiscor}
Let $\beta_{1}, ..., \beta_{n}$ be a local basis of $T^*W$ and let $s \in L^{k}$ be a non-vanishing section. Then a local basis of $A(\sigma)^*$ is given by the forms $\beta_{i}$ along with $\tau_{k}(s)$. 
\end{corollary}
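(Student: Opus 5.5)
The plan is to reduce to a pointwise statement: check that the given $n+1$ forms are sections of $A(\sigma)^*$ and that they are dual, up to an invertible triangular change, to the local basis $\sigma(X_1), \dots, \sigma(X_n), s'\mathcal{E}$ of $A(\sigma)$ from Section~\ref{sec:HSalg}. Here $X_1,\dots,X_n$ is the frame of $TW$ dual to $\beta_1,\dots,\beta_n$, and $s' \in L^{-k}$ is non-vanishing. We may take $s'$ to be the section dual to $s$, so that $s(s') = 1$.

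First, the forms are sections of $A(\sigma)^*$. The pullbacks $\pi^*\beta_i$ are smooth forms on $L$, so they restrict along the anchor to algebroid forms. By the preceding lemma, $\tau_k$ is a twisted algebroid $1$-form valued in $\pi^*L^{-k}$, and pairing it with $s \in L^k$ gives an untwisted algebroid $1$-form $\tau_k(s)$.

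Next, compute the pairing matrix. Since $\sigma(X_j) = \nabla_{X_j} + (\text{vertical})$ and $s'\mathcal{E}$ is vertical, we get $\pi^*\beta_i(\sigma(X_j)) = \beta_i(X_j) = \delta_{ij}$ and $\pi^*\beta_i(s'\mathcal{E}) = 0$. For $\tau_k$, use $\epsilon(\nabla_X) = 0$ and $\epsilon(\mathcal{E}) = 1$, and note that the $\eta_{i,k}$ are basic, so they annihilate $\mathcal{E}$. This gives $\tau_k(s'\mathcal{E}) = \epsilon_k(s'\mathcal{E})$. Now $\epsilon_k$ is $\epsilon$ viewed with a factor of fibrewise degree $-k$ absorbed into the twist, so this equals $s'$ regarded as a section of $\pi^*L^{-k}$. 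Hence $\tau_k(s)(s'\mathcal{E}) = s(s') = 1$. The remaining pairings $\tau_k(s)(\sigma(X_j))$ are smooth functions by the preceding lemma.

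So the pairing matrix is block lower triangular, with identity blocks on the diagonal, and is therefore invertible everywhere, including along $W$. Two consequences follow. The $n+1$ forms are linearly independent at every point. Since $A(\sigma)^*$ has rank $n+1$, they form a local basis.

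The main obstacle is the bookkeeping of twists and degrees in $\tau_k(s'\mathcal{E})$. One has to make sure that the convention $\eta_r \in \Omega^j_W(L^{-i+r}) \otimes L^{-r}$ makes $\epsilon_k(s'\mathcal{E})$ exactly $s'$, a smooth nowhere-vanishing section, rather than $s'$ times a power of the fibre coordinate. I would verify this in a local trivialization with fibre coordinate $z$. There $s' = z^{k}$ as a function, $\epsilon = dz/z$, and $\epsilon_k$ corresponds to $dz/z^{k+1}$ tensored with the degree-$k$ generator. Then $\tau_k(s)(z^{k+1}\partial_z)$ comes out equal to $1$, which matches the normal form of Proposition~\ref{normalformprop}.
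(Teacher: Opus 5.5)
Your proof is correct and follows the paper's approach. The paper says the corollary follows immediately from the local frame $\sigma(X_1),\dots,\sigma(X_n), s'\mathcal{E}$ of $A(\sigma)$, and you have carried out that pairing computation, including the right check that $\tau_k(s)(s'\mathcal{E}) = s(s') = 1$. The off-diagonal entries in fact vanish: $\epsilon_k(\eta_i(X)\mathcal{E})$ cancels $\eta_{i,k}(X)$ term by term, so $\tau_k(\sigma(X)) = 0$ and $\{\beta_i, \tau_k(s)\}$ is exactly the dual frame, though your triangularity argument already suffices.
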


\subsubsection*{The principal representation}
A hypersurface algebroid $A(\sigma)$ on a line bundle $L$ induces a canonical flat $TW$-connection $\nabla^{\sigma}$ on the bundle
\begin{equation}\label{eq:principal_bundle}
    P_k(L) = \bigoplus_{i=0}^{k} L^i.
\end{equation}
We call $\nabla^{\sigma}$ the \emph{principal representation} of $TW$ induced by $A(\sigma)$, as it serves as the receptacle for the higher residues, or \emph{principal part}, of an $A$-form. We define $\nabla^{\sigma}$ by describing the following $\Omega^\bullet_W$-dg-module structure on $\Omega^\bullet_W(P_k(L))$. Given a section $u \in L^r$, its differential is given by
\begin{equation}\label{eq:principal_diff}
    d^{\sigma} u = d^\nabla u + \sum_{i=1}^{r-1} (i - r)\eta_i \otimes u,
\end{equation}
where $\eta_i \otimes u \in \Omega^1_W(L^{r-i})$.

The global analogue of the decomposition \eqref{localdecomp} is provided by the following result. 

\begin{proposition}  \label{tauchainmap}
There is an isomorphism of cochain complexes 
\[
\tau: \Omega_{L}^{\bullet} \oplus \Omega_{W}^{\bullet}(P_{k}(L))[-1] \to \Omega_{A(\sigma)}^{\bullet}, \qquad (\beta, \sum_{r = 0}^{k} \phi_{r} ) \mapsto \beta + \sum_{r = 0}^{k} \phi_{r} \wedge \tau_{r},
\]
where $\Omega_{W}^{\bullet}(P_{k}(L))[-1]$ denotes the principal dg-module with degrees shifted by $1$, and where $\phi_{r} \wedge \tau_{r} \in \Omega_{A(\sigma)}^{\bullet}$ is obtained by pairing $L^{r}$ and $L^{-r}$ and wedging the differential form components. In particular, we have a canonical isomorphism 
\[
H^{\bullet}(A(\sigma)) \cong H^{\bullet}(W) \oplus H^{\bullet - 1}(W, P_{k}(L)). 
\]
\end{proposition}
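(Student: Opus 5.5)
We have to show three things: $\tau$ is a linear isomorphism in each degree, it intertwines the differentials, and the cohomology statement follows. The last is immediate once the first two hold. Indeed, $\pi^*:\Omega^\bullet_W\to\Omega^\bullet_L$ is a quasi-isomorphism because $L$ deformation retracts onto its zero section. This gives $H^\bullet(A(\sigma))\cong H^\bullet(W)\oplus H^{\bullet-1}(W,P_k(L))$.

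\textbf{Bijectivity.} We argue locally and then glue, since $\tau$ is $C^\infty(L)$-linear in the appropriate sense. Take a local trivializing section $s$ of $L^k$, a local basis $\beta_1,\dots,\beta_n$ of $T^*W$, and a fibre coordinate $t$. By Corollary \ref{localbasiscor}, a local frame of $A(\sigma)^*$ is $\beta_1,\dots,\beta_n,\tau_k(s)$. Hence every $A$-form can be written uniquely as $\omega=\beta'+\gamma\wedge\tau_k(s)$, with $\beta',\gamma$ smooth forms in the $\beta_i$-directions over $L$.

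Next, Taylor-expand $\gamma$ in $t$ to order $k$ with smooth remainder:
\[
\gamma=\sum_{j=0}^{k-1}t^j\gamma_j+t^{k}\tilde\gamma .
\]
The term $t^k\tilde\gamma\wedge\tau_k$ is smooth, because $t^k\tau_k$ has only a simple pole and its $1/t$ part is $dt/t$ times a unit. We then peel off terms recursively. By the definition of $\tau_r$, the difference $t^{k-r}\tau_k-\tau_r$ is a combination of lower-order poles, built from the $\eta_i$ terms. So the principal parts can be matched uniquely, descending in $r$ from $k$ to $0$, by fibrewise-degree-homogeneous pieces $\phi_r\wedge\tau_r$ with $\phi_r\in\Omega_W(L^r)$. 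What remains is a smooth form $\beta$.

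Uniqueness follows by a pole-order argument. If $\beta+\sum_r\phi_r\wedge\tau_r=0$, look at the top pole order: $\tau_r$ has a pole of exactly order $r+1$ with leading term $\epsilon_r$, so $\phi_k=0$. Descending in $r$ kills all $\phi_r$, and then $\beta=0$. These local statements are canonical, since they are expressed via the globally defined $\tau_r$, so they glue.

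\textbf{Chain map.} We need
\[
d(\phi_r\wedge\tau_r)=d^\nabla\phi_r\wedge\tau_r\pm\phi_r\wedge d\tau_r ,
\]
together with a formula for $d\tau_r$. To compute $d\tau_r$, use Lemma \ref{differentialequation}, in the form $d\epsilon_r=-r\,\epsilon\wedge\epsilon_r+\dots$ with respect to the twisted connection, together with $d\epsilon=0$, which holds because $\nabla$ is flat. Then impose the Maurer--Cartan equation of Theorem \ref{MCequation} on $d^\nabla\eta_i$. The expected outcome is
\[
d^{\nabla}\tau_r=\sum_{i=1}^{r-1}(i-r)\,\eta_i\wedge\tau_{r-i},
\]
up to sign conventions. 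It can be checked first on generators, since both sides are $A$-forms and it suffices to pair them with $\sigma(X)$, $\sigma(Y)$ and $u\mathcal E$. Substituting this and regrouping exactly reproduces $\tau(d^\sigma\phi)$ from \eqref{eq:principal_diff}, with the degree shift accounting for the sign.

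\textbf{Main obstacle.} The hardest step is this computation of $d\tau_r$, with careful bookkeeping of the fibrewise-degree shifts $\eta_{i,r}$ and of the quadratic Maurer--Cartan terms. The coefficients $(i-r)$ must arise from combining $d\epsilon_r$-type terms, which carry factors of $r$, with $d\eta_{i,r}$-type terms, which carry factors of $r-i$. I would first verify the formula in the local normal form of Proposition \ref{normalformprop}, where $\tau_r=dz/z^{r+1}$ plus corrections, and then do the general computation.
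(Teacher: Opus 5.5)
Your overall route is the paper's: a local frame from Corollary~\ref{localbasiscor}, vertical Taylor expansion, matching against the $\tau_r$, a top-pole-order argument for injectivity, and a direct computation of $d(u\wedge\tau_r)$ via Lemma~\ref{differentialequation} and the Maurer--Cartan equation. Your formula $d^{\nabla}\tau_r=\sum_{i=1}^{r-1}(i-r)\,\eta_i\wedge\tau_{r-i}$ is correct. In a flat local trivialization it comes from $d(t^{i-r}\eta_i)$ together with the Maurer--Cartan equation, and $d\epsilon_r$ contributes nothing. Gluing via uniqueness is also a fine substitute for the paper's partition of unity.

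There is, however, an off-by-one error in the surjectivity step. The term $t^k\tilde\gamma\wedge\tau_k(s)$ is \emph{not} smooth, because a simple pole is not smooth. In a flat local trivialization $t^k\tau_k(s)=\frac{dt}{t}-\sum_{i=1}^{k-1}t^{i}\eta_i$, so this term has singular part $\tilde\gamma|_W\wedge\frac{dt}{t}$. That is exactly the $\phi_0\wedge\tau_0$ component. Your $k$ Taylor coefficients $\gamma_0,\dots,\gamma_{k-1}$ only produce $\phi_k,\dots,\phi_1$, so as written the logarithmic component is lost. As a check, your claim would let $\tau_0=\epsilon$ be written without a $\tau_0$-term, contradicting your own injectivity argument. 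The fix is to expand one order further, $\gamma=\sum_{j=0}^{k}t^j\gamma_j+t^{k+1}\tilde\gamma$; then $t^{k+1}\tau_k(s)$ is genuinely smooth.

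Second, the difference between $t^{j}\tau_k(s)$ and $\tau_{k-j}$ (suitably paired) is not merely ``a combination of lower-order poles''. It equals $-\sum_{i=k-j}^{k-1}t^{\,i+j-k}\eta_i$, which has non-negative fibrewise degree and is therefore smooth. This is the identity the paper uses, and it makes the matching immediate: each $\gamma_j$ directly gives $\phi_{k-j}=\gamma_j s$, with no recursion. It is also necessary, not just convenient. Leftover horizontal poles $t^{-m}\eta_i$ could not be absorbed by further terms $\phi_r\wedge\tau_r$, because the $dt$-components of $\omega$ already determine every $\phi_r$. With these two corrections your argument coincides with the paper's proof.
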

\begin{proof}
To prove injectivity of $\tau$, we look at the most singular term in $\tau(\beta, \sum_{r = 0}^{k} \phi_{r}) = 0$ to show that $\phi_{k} = 0$, and then proceed inductively. To prove surjectivity, we start by working locally on $W$. By Corollary \ref{localbasiscor}, a form $\omega \in \Omega^p_{A(\sigma)}$ can be expanded as 
\[
\omega = \alpha \wedge \tau_{k}(s) + \gamma, 
\]
where $\alpha$ and $\gamma$ are horizontal forms, and $s \in L^{k}$ is non-vanishing. Now Taylor expand $\alpha$ in the vertical direction: 
\[
\alpha = \alpha_0 + \alpha_1 + ... + \alpha_{k} + \tilde{\alpha}_{k+1},
\]
where $\alpha_{i} \in \Omega^{p-1}_{W}(L^{-i})$ and $ \tilde{\alpha}_{k+1} \in \Omega^{p-1}_{L}$ vanishes to order at least $k + 1$ along $W$. Hence $\tilde{\alpha}_{k+1} \wedge \tau_{k}(s)$ is smooth. Furthermore, 
\[
\alpha_{i} \wedge \tau_{k}(s) =  \alpha_{i}s  \wedge \tau_{k-i} + \sum_{j = k-i}^{k-1} s \alpha_{i} \wedge \eta_{j}, 
\]
where the terms in the right-hand sum are all smooth. Hence, we have $\omega = \sum_{r = 0}^{k} \phi_{r} \wedge \tau_{r} + \beta,$ where $\phi_{r} = \alpha_{k-r} s \in  \Omega^{p-1}_{W}\otimes L^{r}$ and $\beta \in \Omega_{L}^{p}$ is the smooth form obtained by adding up $\gamma, \tilde{\alpha}_{k+1} \wedge \tau_{k}(s)$ and all of the terms $\sum_{j = k-i}^{k-1} s \alpha_{i} \wedge \eta_{j}$. This shows that $\tau$ is surjective locally on $W$. From this we get global surjectivity using a partition of unity.

In order to show that $\tau$ is a cochain map, it suffices to check that $d(u \wedge \tau_{r}) = \tau(d^{\sigma} u)$ for $u \in L^{r}$. This is a straightforward computation making use of Equation \ref{eq:principal_diff}, the Maurer-Cartan equation from Corollary \ref{MCequation}, and $d^{\nabla} u = du + r u \epsilon$ from Lemma \ref{differentialequation}.
\end{proof}

\subsubsection*{Restriction to the hypersurface} The algebroid $A(\sigma)$ may be restricted to the submanifold $W$, giving the following short exact sequence 
\begin{equation} \label{restrictedses}
0 \to L^{-k} \to A(\sigma)|_{W} \to TW \to 0. 
\end{equation}
The twisted algebroid form $\tau_{k}$ can be viewed as a retraction $A(\sigma)|_{W}  \to L^{-k}$ of the inclusion $L^{-k} \to A(\sigma)|_{W}$, albeit one that does not respect the Lie algebroid structure. This induces a decomposition 
\begin{equation} \label{taudecomp} 
\Omega_{A(\sigma)|_{W}}^{\bullet} \cong \Omega_{W}^{\bullet} \oplus \Omega_{W}^{\bullet}(L^k)[-1]
\end{equation} 
as graded vector spaces. Now consider the restriction morphism of cdga's $\mathcal{R}: \Omega_{A(\sigma)}^{\bullet} \to \Omega^{\bullet}_{A(\sigma)|_{W}}$. In terms of the decompositions of Proposition \ref{tauchainmap} and Equation \eqref{taudecomp}, this is given by 
\begin{equation} \label{restrictionexp}
\mathcal{R} : \Omega_{L}^{\bullet} \oplus  \Omega_{W}^{\bullet}(P_{k}(L))[-1] \to  \Omega_{W}^{\bullet} \oplus \Omega_{W}^{\bullet}(L^k)[-1], \qquad (\beta, \sum_{r = 0}^{k} \phi_{r}) \mapsto (\beta|_{W} + \sum_{r = 1}^{k-1} \phi_{r} \wedge \eta_{r}, \phi_{k}). 
\end{equation}

\subsubsection*{A universal cdga} Recall the equivariant universal algebroid $E_{k} \Rightarrow G_{k} \times \mathbb{R}$ from Section \ref{sec:equivuniversalalg} and recall that it admits commuting actions of $\mathbb{R}^*$ and $G_{k}$. These actions are inherited by the algebroid de Rham complex $\Omega_{E_{k}}^{\bullet}$, as well as the associated principal dg module. Hence, we may consider the $G_{k}$-invariant subcomplex of $\Omega^\bullet_{G_{k}}(P_k(L))$ and this gives the following dg-module over the Chevalley-Eilenberg algebra $\mathrm{CE}(\mathfrak{g}_{k})$: 
\begin{equation} \label{univcdga}
S_{k} = \oplus_{r = 0}^{k} \mathrm{CE}(\mathfrak{g}_{k})t_r,
\end{equation}
where $t_{r}$ is a generator of degree $1$ corresponding to the constant unit section of $L^r$. By Equation \eqref{eq:principal_diff} the differential is given by 
\[
dt_{r} = \sum_{i = 0}^{r-1}(i-r)x_{i} t_{r-i}. 
\]
The $\mathbb{R}^*$-action shows up as a weight grading on $S_{k}$, extending the weight grading on $\mathrm{CE}(\mathfrak{g}_{k})$, and such that $|t_{i}| = -i$. 
\begin{remark}
Under the isomorphism of Proposition \ref{tauchainmap}, $S_{k}$ is mapped to a \emph{subalgebra} of $\Omega_{E_{k}}^{\bullet}$. Hence, $S_{k}$ is a cdga defined over $\mathrm{CE}(\mathfrak{g}_{k})$. Its multiplication is given by the following curious formulas
\[
t_{0} t_{r} = x_{1} t_{r-1} + x_{2} t_{r-2} + ... + x_{r-1} t_{1}
\] 
and 
\begin{equation} \label{Skprod}
t_{s} t_{r} = x_{s}t_{r} + ... + x_{r-1} t_{s + 1}
\end{equation}
for $0 < s < r$. 
\end{remark}

\subsection{Global cohomology} \label{sec:globalcoho} Let $W \subset M$ be a hypersurface in a manifold, let $L = \nu_{W}$ denote the normal bundle, and let $A \Rightarrow M$ be an order $k +1$ hypersurface algebroid for $(W,M)$. In this section we study the cohomology of $A$ and globalize the results from Section \ref{sec:localcoho}, thereby obtaining a fully global definition of the higher residues of an algebroid form. This will be applied in Section \ref{sec:defconst} when we study deformations. 

By choosing a tubular neighbourhood embedding $\phi: L \to U \subseteq M$ for $W$, we determine a splitting $\sigma$, as well as a cochain map
\[
\mathrm{Princ} : \Omega_{A}^{\bullet} \to \Omega_{W}^{\bullet}(P_k(L))[-1]
\]
which extracts the `principal' part of a form. More precisely, this map is defined by restricting an algebroid form to a neighbourhood of $W$, applying the inverse to the isomorphism of Proposition \ref{tauchainmap}, and then projecting to the principal dg module. There is also an inclusion of the smooth forms $\Omega_M^{\bullet} \to \Omega_{A}^{\bullet}$ into the kernel of $\mathrm{Princ}$ induced by the anchor map $\rho: A \to TM$. 

\begin{theorem} \label{decomp1proof}
There is a short exact sequence of chain complexes 
\begin{equation} \label{exactglobalcomp}
0 \to \Omega_{M}^{\bullet} \to \Omega_{A}^{\bullet} \to \Omega_{W}^{\bullet}(P_{k}(L))[-1] \to 0,
\end{equation}
which induces a short exact sequence of cohomology groups 
\begin{equation} \label{exactglobalcoh}
0 \to H^{\bullet}(M) \to H^{\bullet}(A) \to H^{\bullet - 1}(W, P_{k}(L)) \to 0.
\end{equation}
\end{theorem}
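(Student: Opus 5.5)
Fix the tubular neighbourhood embedding $\phi: L \to U \subseteq M$ with its induced splitting $\sigma$, so that $A|_U \cong A(\sigma)$ and $\mathrm{Princ}$ is defined. The plan is to establish the short exact sequence \eqref{exactglobalcomp} at the cochain level first, and then to deduce \eqref{exactglobalcoh} by showing that the connecting homomorphism of the associated long exact sequence vanishes.

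For the cochain statement we need three things. First, the anchor $\rho$ is an isomorphism on the dense open set $M \setminus W$, so the inclusion $\Omega_M^\bullet \to \Omega_A^\bullet$ is injective. Second, $\mathrm{Princ}$ is a cochain map, since it is a composite of restriction to $U$, the cochain isomorphism $\tau^{-1}$ of Proposition \ref{tauchainmap}, and the projection onto a direct summand of complexes. Surjectivity of $\mathrm{Princ}$ then follows by choosing a bump function $\chi$ on $M$ supported in $U$ and equal to $1$ near $W$. Given $\psi \in \Omega_W^{\bullet}(P_k(L))[-1]$, the form $\chi\, \tau(0,\psi)$ extends by zero to a global $A$-form, and its principal part equals $\psi$. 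This holds because the principal part only depends on the germ along $W$: multiplying by a smooth function that is $1$ near $W$ changes the form only by a smooth form.

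Third, and this is the main point of the cochain step, we need exactness in the middle. Suppose $\mathrm{Princ}(\omega)=0$. Then on $U$ we have $\omega=\tau(\beta,0)=\beta$ with $\beta \in \Omega_L^\bullet$ smooth. Away from $W$, $\omega$ is a smooth form via $\rho$. These two descriptions agree on the overlap, so $\omega$ lies in the image of $\Omega_M^\bullet$. Here one should also remark that $\mathrm{Princ}$ kills smooth forms, which holds because $\tau$ restricted to $\Omega_L^\bullet$ is just the inclusion.

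For the cohomology statement, it suffices to show that $H^\bullet(A) \to H^{\bullet-1}(W,P_k(L))$ is surjective, because then the connecting map $\delta$ vanishes and the long exact sequence splits into the short exact sequences \eqref{exactglobalcoh}. Take $\psi$ closed and consider $\omega=\chi\,\tau(0,\psi)$. Then $d\omega = d\chi \wedge \tau(0,\psi)$, since $\tau$ is a cochain map and $\tau(0,\psi)$ is closed. Because $d\chi$ vanishes near $W$, this is a smooth closed form on $M$ supported in $U \setminus W$, and it represents $\delta[\psi]$. We must show it is exact in $\Omega_M^\bullet$.

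This is the step I expect to be the main obstacle, and I would handle it with Proposition \ref{tauchainmap}. That proposition gives $H^\bullet(A(\sigma)) \cong H^\bullet(L)\oplus H^{\bullet-1}(W,P_k(L))$, so the principal map is already surjective locally on $U$, and the local connecting map vanishes. Then $\delta$ is compatible with restriction to $U$ by naturality. Concretely, write $\delta[\psi] = [d\chi\wedge \tau(0,\psi)]$ and note that $d\chi\wedge\tau(0,\psi) = d\big((\chi-1)\tau(0,\psi)\big)$ on $U$. Since $(\chi-1)$ vanishes near $W$, the form $(\chi-1)\tau(0,\psi)$ is smooth on $U$.

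To obtain exactness on all of $M$, I would rather change the representative of $[\psi]$. Replace the cutoff construction by a Mayer–Vietoris argument for the cover $\{U, M\setminus W\}$, using the three facts $\Omega_A|_{M\setminus W}=\Omega_{M\setminus W}$, $\Omega_A|_U\cong \Omega_U\oplus \Omega_W(P_k)[-1]$, and $\Omega_A|_{U\setminus W}=\Omega_{U\setminus W}$. A five-lemma comparison with the de Rham Mayer–Vietoris sequence for $M$ then identifies $H^\bullet(A)\cong H^\bullet(M)\oplus H^{\bullet-1}(W,P_k(L))$, with the direct summand compatible with the maps in \eqref{exactglobalcoh}. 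This yields both the injectivity of $H(M)\to H(A)$ and the surjectivity of the principal map, which gives \eqref{exactglobalcoh}.
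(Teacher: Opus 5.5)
Your cochain-level argument is correct and matches the paper, which uses the same bump-function splitting $\phi\mapsto\chi\,\tau(0,\phi)$. You also correctly reduce \eqref{exactglobalcoh} to showing that $\delta[\psi]=[d\chi\wedge\tau(0,\psi)]$ vanishes in $H^{\bullet}(M)$. But neither of your arguments for this vanishing works.

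\begin{itemize}
\item \textbf{Naturality.} This only gives $\delta[\psi]|_U=0$. The primitive $(\chi-1)\tau(0,\psi)$ lives on $U$ and does not extend to $M$.
\item \textbf{Mayer--Vietoris.} This step assumes the conclusion. The two Mayer--Vietoris sequences do not have matching terms: the $A$-sequence has the extra summand $P=H^{\bullet-1}(W,P_k(L))$ in the $U$-slot. So the five lemma does not apply. Everything hinges on the map $P\to H^{\bullet}(U\setminus W)$, $[\psi]\mapsto[\tau(0,\psi)|_{U\setminus W}]$, and your three facts say nothing about it.
\end{itemize}

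In fact, if $\partial$ denotes the de Rham Mayer--Vietoris boundary for $\{U,M\setminus W\}$, then $\delta[\psi]=\pm\partial[\tau(0,\psi)|_{U\setminus W}]$. So $\delta=0$ is \emph{equivalent} to this class lying in the image of $H^{\bullet}(U)\oplus H^{\bullet}(M\setminus W)$. The map is not zero in general. For instance, $\psi=\psi_0$ with $d\psi_0=0$ is $d^{\sigma}$-closed and gives $\tau(0,\psi)=\psi_0\wedge\epsilon$. Its class on $U\setminus W$ is $[\psi_0\wedge\gamma]$, where $\gamma$ is the closed $1$-form attached to a metric on $L$, and this need not vanish.

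The missing ingredient is a computation showing that this class extends over $U$. This is the paper's Lemma \ref{canonicalexactness}.
\begin{itemize}
\item Choose a metric $h\in L^{-2}$ with $\nabla h=-2\gamma\otimes h$, so that $\epsilon=\tfrac12 d\log h+\gamma$.
\item Combine $d\psi_p=d^{\nabla}\psi_p-p\,\epsilon\wedge\psi_p$ (Lemma \ref{differentialequation}) with the closedness equations $d^{\nabla}\psi_p=p\sum_{r>p}\eta_{r-p}\wedge\psi_r$.
\item For closed $\psi\in\Omega^{i-1}_W(P_k(L))$ this gives $\tau(0,\psi)|_{L\setminus W}=\psi_0\wedge\gamma+dv$, with $v=(-1)^i\big(\sum_{r\geq 1}r^{-1}\psi_r-\tfrac12\log(h)\,\psi_0\big)$.
\end{itemize}
An equivalent route is to decompose $\tau(0,\psi)$ by fibrewise homogeneity. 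The pieces of degree $-r\neq 0$ are closed, hence exact via $\iota_{\mathcal{E}}$. The degree-$0$ piece is $\psi_0\wedge\epsilon$.

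The paper then replaces your lift by $\mathcal{S}(\psi)=d(\chi v)$, which near $W$ equals the $A$-form $\tau(0,\psi)-\psi_0\wedge\gamma$. This is a \emph{closed} $A$-form with principal part $\psi$, so $\delta=0$ immediately. With this lemma in hand, your Mayer--Vietoris argument could also be completed, using the corrected local lift $\tau(0,\psi)-\psi_0\wedge\gamma$, which restricts to an exact form on $U\setminus W$. Without it, the proof of \eqref{exactglobalcoh} is incomplete.
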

\begin{remark} Let $A$ be a Scott $b^{k+1}$-algebroid, associated to a $k$-jet of defining function $f$ for $W$. Using $f$ to trivialize $\nu_W$, the splitting associated to $A$ is given by $\sigma = d$, where $d$ is the trivial flat connection. Consequently $P_k(L)$ is isomorphic to the direct sum of $(k+1)$-copies of the trivial local system. Therefore $H^{\bullet - 1}(W, P_{k}(L))$ is isomorphic to $H^{\bullet - 1}(W)^{k+1}$, which recovers Proposition 4.3 from \cite{MR3523250}.
\end{remark}
\begin{proof}
The proof consists simply in producing an appropriate splitting of $\mathrm{Princ}$. However, the details of this are slightly technical and require a few steps. 
We start by fixing the following preliminary data: 
\begin{itemize}
\item a smooth bump function $\psi_{U}$ which is supported in $U$ and equal to $1$ on a sub-neighbourhood $U' \subset U$ of $W$. 
\item a metric $h$ on $L$, viewed as a section $h \in L^{-2}$. 
\end{itemize}
First, the bump function $\psi_U$ allows us to construct a splitting of $\mathrm{Princ}$ as follows
\begin{equation} \label{ogsplitting}
s: \Omega_{W}^{\bullet}(P_{k}(L))[-1] \to \Omega^{\bullet}_{A}, \qquad \phi \mapsto \psi_U \tau(\phi). 
\end{equation}
This implies that the sequence \eqref{exactglobalcomp} is exact. To prove the theorem it therefore remains only to show that the connecting homomorphism
\[
\delta : H^{i-1}(W, P_{k}(L)) \to H^{i+1}(M), \qquad [\alpha] \to [d s(\alpha)]
\]
vanishes. We now see that the splitting $s$ is insufficient because it does not send closed forms to closed forms. Hence, the second step is to modify the splitting so that it does satisfy this property. It is at this stage that the metric $h$ comes into play. 

\begin{lemma} \label{canonicalexactness}
Let $\alpha = \sum_{r = 0}^{k} \alpha_{r} \in \Omega_{W}^{i-1}(P_{k}(L))$ be a closed element. Let $h \in L^{-2}$ be a metric on $L$ and let $\gamma \in \Omega^1_{W}$ be the closed $1$-form such that $\nabla(h) = -2 \gamma \otimes h$. Then 
\[
\tau(\alpha)|_{L \setminus W} = \alpha_0 \wedge \gamma + dv, 
\]
where $v = (-1)^{i}( \sum_{r = 1}^{k} r^{-1} \alpha_r - \frac{1}{2} \log(h) \alpha_0)$ is a horizontal form with singularities along $W$. 
\end{lemma}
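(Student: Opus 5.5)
The plan is a direct computation on $L \setminus W$. There the $1$-form $\epsilon$ is defined, and every section $u \in L^{r}$ can be viewed as a function of fibrewise degree $-r$. Applying Lemma \ref{differentialequation} with $i=-r$ gives, on $L\setminus W$,
\[
du = \nabla(u) - r\,u\,\epsilon .
\]
Extending this to twisted forms $\alpha_r \in \Omega^{i-1}_W(L^r)$ gives
\[
d\alpha_r = d^{\nabla}\alpha_r - (-1)^{i-1} r\,\alpha_r\wedge\epsilon ,
\]
where the sign comes from moving $\epsilon$ past a form of degree $i-1$ (I will fix sign conventions once and for all at this point). I will then expand
\[
\tau(\alpha) = \sum_r \alpha_r\wedge\tau_r
= \sum_r \alpha_r\wedge\epsilon \;-\; \sum_r\sum_{j=1}^{r-1}\alpha_r\wedge\eta_j ,
\]
in which the $L$-factors are paired, so every term is an ordinary form on $L\setminus W$.

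Next I unpack the closedness condition $d^{\sigma}\alpha = 0$ component by component, using \eqref{eq:principal_diff}. Since $d^{\sigma}$ sends $L^{r}$ to $\bigoplus_j L^{r-j}$ with coefficient $(j-r)\eta_j$, the $L^{s}$-component of $d^\sigma\alpha$ reads
\[
d^{\nabla}\alpha_s - s\sum_{j\ge 1}\eta_j\wedge\alpha_{s+j} = 0 .
\]
For $s\ge 1$ this gives
\[
\tfrac{1}{s}\, d^{\nabla}\alpha_s = \sum_j \eta_j\wedge\alpha_{s+j},
\]
while for $s=0$ it gives simply $d^{\nabla}\alpha_0 = d\alpha_0 = 0$, since $L^0$ is trivial and carries the trivial connection. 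Substituting into the formula for $d\alpha_r$, I get for $r\ge 1$ that $d\big(r^{-1}\alpha_r\big)$ equals $\pm\alpha_r\wedge\epsilon$ plus $\sum_j \eta_j\wedge\alpha_{r+j}$. Summing over $r$ and reindexing with $r+j\mapsto r$, the $\eta$-terms reproduce exactly the correction terms $\sum_r\sum_j\alpha_r\wedge\eta_j$ in $\tau(\alpha)$, up to the sign $(-1)^i$ and the reordering of a $1$-form past an $(i-1)$-form. This accounts for all of $\tau(\alpha)$ except the term $\alpha_0\wedge\epsilon$.

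To handle $\alpha_0\wedge\epsilon$, I apply Lemma \ref{differentialequation} to $h\in L^{-2}$, which is a positive function on $L\setminus W$:
\[
dh = \nabla(h) + 2h\epsilon = -2\gamma h + 2h\epsilon,
\qquad\text{so}\qquad
\epsilon = \gamma + \tfrac12\, d\log h .
\]
Because $d\alpha_0=0$, this gives
\[
\alpha_0\wedge\epsilon = \alpha_0\wedge\gamma \pm d\big(\tfrac12\log(h)\,\alpha_0\big).
\]
Collecting everything yields $\tau(\alpha) = \alpha_0\wedge\gamma + dv$ with $v$ as stated. The only real obstacle is bookkeeping: checking the signs $(-1)^i$ and the ordering of wedge factors consistently, and confirming that the index shift $r+j\mapsto r$ matches the range $1\le j\le r-1$ built into $\tau_r$. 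The terms with $r+j>k$ vanish automatically because $\alpha$ has no components above $L^k$.
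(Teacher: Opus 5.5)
Your proposal is correct and follows essentially the same route as the paper. The paper also writes $\epsilon = \gamma + \frac{1}{2}d\log(h)$ via Lemma \ref{differentialequation}, extracts $d^{\nabla}\alpha_p = p\sum_{r>p}\eta_{r-p}\wedge\alpha_r$ from $d^{\sigma}$-closedness, uses $d\alpha_p = d^{\nabla}\alpha_p - p\,\epsilon\wedge\alpha_p$, and assembles these into $\tau(\alpha) = \alpha_0\wedge\epsilon + (-1)^i\sum_{r\geq 1} r^{-1}(d^{\nabla}\alpha_r - r\,\epsilon\wedge\alpha_r)$. Your reindexing $r+j\mapsto r$ and sign bookkeeping spell out what the paper compresses into ``putting this all together'', and the signs do come out to the stated $(-1)^i$.
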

\begin{proof}
By Lemma \ref{differentialequation} we have $\epsilon = \frac{1}{2}d \log(h) + \gamma$. Using Equation \ref{eq:principal_diff} and the fact that $\alpha$ is $d^{\sigma}$-closed, we have that $d^{\nabla} \alpha_{p} = p \sum_{r = p+1}^{k} \eta_{r-p} \wedge \alpha_{r}$, for all $p$. Furthermore, viewing $\alpha_{p}$ as a differential form on $L$, we may apply Lemma \ref{differentialequation} to obtain $d \alpha_p = d^{\nabla} \alpha_p - p \epsilon \wedge \alpha_p$. Putting this all together we find 
\[
\tau(\alpha) = \alpha_0 \wedge \epsilon + (-1)^{i} \sum_{r = 1}^{k} \frac{1}{r} ( d^{\nabla} \alpha_{r} - r \epsilon \wedge \alpha_r) = \alpha_0 \wedge \gamma + (-1)^{i} d\big( \sum_{r = 1}^{k} \frac{1}{r} \alpha_r - \frac{1}{2} \log(h) \alpha_0 \big).\hfill \qedhere
\]
\end{proof}
Now define the following modified splitting 
\begin{align} \label{altsplitting}
\mathcal{S}: \ &\Omega_{W}^{i-1}(P_{k}(L)) \to \Omega_{A}^{i}  \\
\sum_{r = 0}^{k} \alpha_{r} \mapsto \psi_U \big( \frac{1}{2} \alpha_{0} \wedge d \log(h) &+ \sum_{r = 1}^{k} \alpha_{r} \wedge \tau_{r} \big) + (-1)^{i} d\psi_{U} \wedge \big( - \frac{1}{2}\log(h) \alpha_{0} + \sum_{r = 1}^{k} r^{-1} \alpha_{r}   \big). \notag
\end{align} 
When $\alpha = \sum_{r = 0}^{k} \alpha_{r}$ is closed, Lemma \ref{canonicalexactness} implies that $\mathcal{S}(\alpha) = d(\psi_U v)$, which is closed. This finishes the proof. \end{proof}

The splitting $\mathcal{S}$ constructed in the proof of Theorem \ref{decomp1proof}, and given by Equation \eqref{altsplitting}, will be used again when proving the results of Section \ref{sec:defconst}. Therefore, we end this section by making a few remarks about its properties. The differential forms in its image are supported in $U$, and it sends closed forms to closed forms. Note also that $\mathcal{S}$ depends on all the chosen data $(\phi, \psi_U, h, \sigma)$. In particular, it depends smoothly on the algebroid $A$ (i.e. on the splitting $\sigma$) through the forms $\tau_r$.

\section{Symplectic geometry on hypersurface algebroids}\label{sec:symalg} 
Now that we have carefully established our formalism for degenerations in the previous sections, we apply it to the study of generically symplectic Poisson structures. We define an \emph{HS symplectic form} to consist of an order $k+1$ hypersurface algebroid $A$ for $(W,M)$ and an algebroid symplectic form $\omega \in \Omega^2_{A}$. More precisely, $\omega$ is an algebroid $2$-form which is closed and non-degenerate, in the sense that the induced map $\omega : A(\sigma) \to A(\sigma)^*$ is an isomorphism. By applying the anchor map to the inverse $\omega^{-1}$, we obtain a Poisson structure $Q = \rho(\omega^{-1})$ on $M$ which is symplectic away from $W$, and degenerates to order $k+1$ along $W$. As a result, it induces a symplectic foliation of the hypersurface. The purpose of the present section is to study the properties of this symplectic foliation. In particular, we establish one of our main results, Theorem \ref{prop:structureonD}, which states that the symplectic variation of the foliation is determined by the extension class of $A$. At the end of this section, we construct HS symplectic forms for the running examples from Section \ref{sec:Examples}. 

The theory developed in this section is local to a neighbourhood of $W$. Hence, we will make use of a tubular neighbourhood embedding and assume that $M = L = \nu_{W}$ for the entirety of this section. We also fix an order $k+1$ HS algebroid $A(\sigma)$ which is determined by a splitting $\sigma = \nabla + \sum_{i = 1}^{k-1}\eta_{i}$. 

\subsection{Local decomposition of HS symplectic structures} Let $\omega \in \Omega^2_{A(\sigma)}$ be an algebroid symplectic form. Using the isomorphism $\tau$ of Proposition \ref{tauchainmap}, the form $\omega$ decomposes into $(\beta, \alpha)$, where $\beta \in \Omega_{L}^2$ is the smooth part and $\alpha = \sum_{r = 0}^{k} \alpha_{r} \in \Omega_{W}^{1}(P_{k}(L))$ is the principal part. The following lemma describes the closure condition in these terms. 

\begin{lemma} \label{lem:sympdata} A closed algebroid $2$-form $\omega \in \Omega^2_{A(\sigma)}$ is equivalent to the following data: 
\begin{enumerate}
\item $\beta \in \Omega_{L}^2$ such that $d\beta = 0$, 
\item $\alpha = \sum_{r = 0}^{k} \alpha_{r} \in \oplus_{r = 0}^{k} \Omega_{W}^{1}(L^{r})$ such that 
\[
d^{\nabla} \alpha_{p} = p \sum_{r = p + 1}^{k} \eta_{r - p} \wedge \alpha_{r},
\]
for all $0 \leq p \leq k$. This is an iterative system of equations starting with 
\[
d^{\nabla}\alpha_{k} = 0, \ \ d^{\nabla}\alpha_{k-1} = (k-1)\eta_{1} \wedge \alpha_k, \ \ d^{\nabla} \alpha_{k-2} = (k-2)(\eta_1 \wedge \alpha_{k-1} + \eta_2 \wedge \alpha_k), \ \ ...
\]
\end{enumerate}
\end{lemma}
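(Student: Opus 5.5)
The plan is to read this off from Proposition \ref{tauchainmap}. The map $\tau$ is an isomorphism of cochain complexes
\[
\Omega_{L}^{\bullet} \oplus \Omega_{W}^{\bullet}(P_{k}(L))[-1] \to \Omega_{A(\sigma)}^{\bullet},
\]
where the source carries the direct sum differential. So every $\omega \in \Omega^2_{A(\sigma)}$ is uniquely $\tau(\beta,\alpha)$ with $\beta \in \Omega^2_L$ and $\alpha = \sum_r \alpha_r \in \Omega^1_W(P_k(L))$. Because $\tau$ is a chain map and an isomorphism, $d\omega = 0$ holds if and only if $d\beta = 0$ and $d^{\sigma}\alpha = 0$; the shift $[-1]$ only changes a sign, which does not affect closedness. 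Condition (1) is then immediate, and it remains to write $d^\sigma \alpha = 0$ out componentwise.

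For this I use the defining formula \eqref{eq:principal_diff}, extended to forms by the Leibniz rule for the $\Omega_W^\bullet$-dg-module structure. For $u \in L^r$,
\[
d^\sigma u = d^\nabla u + \sum_{i=1}^{r-1}(i-r)\eta_i \otimes u \in \bigoplus_{p} \Omega^1_W(L^{p}),
\]
and the term $\eta_i \otimes u$ lands in $L^{r-i}$. For a $1$-form $\alpha_r \in \Omega^1_W(L^r)$, the term $(i-r)\eta_i\otimes u$ with coefficient $\gamma$ becomes $\pm (i-r)\,\eta_i\wedge\alpha_r$, again with values in $L^{r-i}$. Now collect the $L^p$-component of $d^\sigma\alpha$. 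It is $d^\nabla\alpha_p$ plus the contributions from $\alpha_r$ with $r-i = p$, that is $i = r-p$, where $r$ ranges over $p+1,\dots,k$. The coefficient is $i - r = -p$, so the $L^p$-component is
\[
d^\nabla\alpha_p - p\sum_{r=p+1}^k \eta_{r-p}\wedge\alpha_r
\]
up to the sign convention in passing $\eta_i$ past the degree-one form. Setting this to zero gives exactly the stated system. The iterative form follows by listing $p = k, k-1, k-2, \dots$. Note that $p=k$ has an empty sum and $p=0$ gives $d^\nabla\alpha_0=0$. This matches the identity $d^{\nabla} \alpha_{p} = p \sum_{r} \eta_{r-p} \wedge \alpha_{r}$ already used in the proof of Lemma \ref{canonicalexactness}.

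The one step needing care is this sign bookkeeping: the Koszul sign from the shift $[-1]$, and the order in which $\eta_i$ and $\alpha_r$ are wedged. I would fix these by checking a single term directly, computing $d(\alpha_r\wedge\tau_r)$ with Lemma \ref{differentialequation} and the Maurer--Cartan equation of Theorem \ref{MCequation}, and confirming the sign in front of $p\,\eta_{r-p}\wedge\alpha_r$. Since only the vanishing of each component matters, any overall sign on a whole component is irrelevant.
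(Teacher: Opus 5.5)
Your proposal is correct and follows the paper's own reasoning. The paper gives no separate proof of this lemma; it is read off from Proposition~\ref{tauchainmap} together with the componentwise expansion of $d^{\sigma}$ from \eqref{eq:principal_diff}, and the same identity is used in the proof of Lemma~\ref{canonicalexactness}. The sign you flag is forced rather than a convention: the Leibniz rule gives $-\gamma\wedge (i-r)\eta_i u = (i-r)\,\eta_i\wedge \gamma u$, so the $L^p$-component of $d^{\sigma}\alpha$ is exactly $d^{\nabla}\alpha_p - p\sum_{r>p}\eta_{r-p}\wedge\alpha_r$, and no separate check is needed.
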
 
In order for $\omega$ to be non-degenerate in a neighbourhood of the zero section $W$, it suffices to check that the restriction $\omega|_{W}$ to $A(\sigma)|_{W}$ is non-degenerate. This can be analyzed using the restriction map $\mathcal{R}$ from Equation \eqref{restrictionexp}. As a result, we obtain the following description of non-degeneracy. 

\begin{lemma} \label{nondegeneracy} Let $\omega = (\beta, \sum_{r = 0}^{k} \alpha_{r}) \in \Omega_{L}^{2} \oplus \Omega_{W}^{1}(P_{k}(L))$ be an algebroid $2$-form. Its restriction to $W$ is given by $\omega|_{W} = (\gamma, \alpha_{k}) \in \Omega_{W}^2 \oplus \Omega_{W}^{1}(L^k)$, where 
\[
\gamma = \beta|_W + \sum_{r = 1}^{k-1} \alpha_{r} \wedge \eta_{r}. 
\]
Then $\omega$ is non-degenerate in a neighbourhood of $W$ if and only if $\alpha_k$ is nowhere vanishing and $\gamma$ restricts to a non-degenerate $2$-form on the kernel of $\alpha_k$. 
\end{lemma}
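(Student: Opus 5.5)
The plan has two parts: compute the restriction $\omega|_W$ using the map $\mathcal{R}$ of \eqref{restrictionexp}, then analyze non-degeneracy of a $2$-form on the extension \eqref{restrictedses} by linear algebra.

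\emph{Computing the restriction.} Writing $\omega = \tau(\beta, \sum_r \alpha_r)$, formula \eqref{restrictionexp} gives directly
\[
\omega|_W = \bigl(\beta|_W + \textstyle\sum_{r=1}^{k-1} \alpha_r \wedge \eta_r,\ \alpha_k\bigr).
\]
Some care is needed with the ordering conventions, since \eqref{restrictionexp} writes $\phi_r \wedge \eta_r$. This formula holds with respect to the decomposition \eqref{taudecomp}, where the second component pairs with $\tau_k$. Concretely, $\omega|_W = \gamma + \alpha_k \wedge \tau_k$ on $A(\sigma)|_W$. Here $\gamma$ and $\alpha_k$ are pulled back along $A(\sigma)|_W \to TW$, and $\tau_k$ is the retraction $A(\sigma)|_W \to L^{-k}$.

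\emph{Non-degeneracy on $W$.} Non-degeneracy is an open condition, so $\omega$ is non-degenerate near $W$ if and only if $\omega|_W$ is non-degenerate at every point of $W$; this uses that $W$ is closed. Fix $x \in W$ and use $\tau_k$ to split $A_x = T_xW \oplus L^{-k}_x$. Choose a generator $e$ of $L^{-k}_x$. Then $\omega_x(e, Y) = \pm\,\alpha_k(Y)(e)$ for $Y \in T_xW$, and $\omega_x(e,e) = 0$. So $\omega_x$ has the block form
\[
\begin{pmatrix} \gamma_x & \pm a \\ \mp a^{T} & 0\end{pmatrix},
\]
where $a$ is the covector $\alpha_k(x)$ paired with $e$.

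\emph{Linear algebra.} A skew form of this shape is non-degenerate if and only if $a \neq 0$ and $\gamma_x$ is non-degenerate on $\ker a$.

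For the forward direction, suppose $a = 0$. Then $e$ lies in the kernel of $\omega_x$. Now suppose $a \neq 0$ but some $v \in \ker a$ lies in the $\gamma_x$-orthogonal of $\ker a$. Then $\gamma_x(v,\cdot)$ vanishes on $\ker a$, so $\gamma_x(v,\cdot) = c\,a$ for some scalar $c$. Hence $v + c' e$ lies in the kernel of $\omega_x$ for a suitable $c'$. This element is nonzero whenever $v \neq 0$.

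For the converse, assume $a \neq 0$ and $\gamma_x$ is non-degenerate on $\ker a$. Let $Y + te$ lie in the kernel of $\omega_x$. Pairing with $e$ gives $a(Y) = 0$. Pairing with $Z \in \ker a$ gives $\gamma_x(Y,Z) = 0$, so $Y = 0$. Finally, pairing with a vector $Z_0$ satisfying $a(Z_0) \neq 0$ gives $t = 0$.

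\emph{Main obstacle.} The only step needing real care is the first one: confirming that, in \eqref{taudecomp}, $\omega|_W$ really is $\gamma + \alpha_k \wedge \tau_k$ with $\gamma$ as stated. This requires checking that on $W$ the forms $\tau_r$ for $r < k$ restrict to $-\eta_r$ (up to the $\epsilon$-terms absorbed in the smooth part). That check comes directly from the definition of $\tau_r$ and its pairing with the generators $\sigma(X)$ and $s\mathcal{E}$. Sign conventions do not affect the non-degeneracy conclusion.
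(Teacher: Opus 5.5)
Your proof is correct and follows the paper's route. Both read off $\omega|_W=\gamma+\alpha_k\wedge\tau_k$ from \eqref{restrictionexp} and reduce to pointwise linear algebra on $A(\sigma)|_W$; the only difference is that the paper uses the top-power criterion $(\omega|_W)^{n+1}=(n+1)\,\gamma^{n}\wedge\alpha_k\wedge\tau_k$ (using $\gamma^{n+1}=0$ since $\dim W=2n+1$) instead of your kernel computation.

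One correction to your last paragraph. Pairing with the generators gives $\tau_r(\sigma(X))=\sum_{i=r}^{k-1}\eta_i(X)$, where the $i$-th term has fibrewise degree $i-r$ and the polar part of the $\epsilon_r$-term cancels $-\sum_{i<r}\eta_{i,r}$. Also $\tau_r(s\mathcal{E})=0$ on $W$ for $r<k$. Hence $\tau_r|_W=+\eta_r$, not $-\eta_r$. Unlike the signs in your block matrix, this sign does matter, because it fixes $\gamma=\beta|_W+\sum_r\alpha_r\wedge\eta_r$.
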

\begin{proof}
The description of the restriction follows directly from Equation \eqref{restrictionexp}. Note that the ranks of $\gamma$ and $\alpha_{k}$ (i.e., $\alpha_{k} \wedge \tau_k$) are respectively bounded above by $2n$ and $2$, where the dimension of $W$ is $2n + 1$. Hence, the non-degeneracy of $\omega|_{W}$ is equivalent to $(\omega|_{W})^{n+1} = (n+1) \alpha_k \wedge \gamma^{n}$ being nowhere vanishing. From this the two conditions follow. 
\end{proof}

We can now analyze the geometric structures induced on $W$ from the restriction $\omega|_{W} = (\gamma, \alpha_{k})$. Starting with $\alpha_{k} \in \Omega^{1}_{W}(L^k)$, we first note from Lemma \ref{nondegeneracy} that the kernel $\mathcal{F} =\mathrm{ker}(\alpha_{k}) \subset TW$ defines a corank $1$ distribution on $W$. By Lemma \ref{lem:sympdata}, $\alpha_{k}$ is $d^{\nabla}$-closed and hence $\mathcal{F}$ defines a foliation. In fact, more is true.

\begin{proposition} \label{trivialvariationfoliationinduced}
Let $\nu_{F}$ be the normal bundle of $\mathcal{F}$ in $W$, which is equipped with the flat Bott connection $\nabla^B$. Then $\alpha_k$ defines a flat isomorphism 
\[
\alpha_{k} : (\nu_{\mathcal{F}}, \nabla^B) \to (L^k, \nabla^k)|_{\mathcal{F}}. 
\]
In particular, the foliation $\mathcal{F}$ has trivial variation: $\mathrm{var}(\mathcal{F}) = 0$ (see Definition \ref{def:varfol}). 
\end{proposition}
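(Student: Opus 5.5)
Since $\alpha_k \in \Omega^1_W(L^k)$ is nowhere vanishing by Lemma \ref{nondegeneracy}, it induces a bundle map $TW \to L^k$. This map is fibrewise surjective, because $L^k$ has rank one, and its kernel is $\mathcal{F}$. It therefore descends to a vector bundle isomorphism $\alpha_k : \nu_{\mathcal{F}} = TW/\mathcal{F} \to L^k$. What remains is to check that this isomorphism intertwines the Bott connection with the restriction of $\nabla^k$ to $\mathcal{F}$. The only input will be the top equation $d^{\nabla}\alpha_k = 0$ of Lemma \ref{lem:sympdata}.

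For flatness, take $X \in \Gamma(\mathcal{F})$ and $Y \in \mathfrak{X}(W)$. By definition the Bott connection is $\nabla^B_X [Y] = [[X,Y]]$. Expanding the twisted exterior derivative gives
\[
0 = (d^{\nabla}\alpha_k)(X,Y) = \nabla^k_X(\alpha_k(Y)) - \nabla^k_Y(\alpha_k(X)) - \alpha_k([X,Y]).
\]
Since $\alpha_k(X)=0$, this reduces to $\nabla^k_X(\alpha_k(Y)) = \alpha_k([X,Y]) = \alpha_k(\nabla^B_X[Y])$. This is exactly the statement that $\alpha_k$ is parallel as a map $(\nu_{\mathcal{F}},\nabla^B) \to (L^k,\nabla^k)|_{\mathcal{F}}$. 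Note that $\nabla^k$ is flat on all of $W$, and not just along the leaves.

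For the final claim I would invoke Definition \ref{def:varfol}, which appears later in the paper and so is not visible here. I expect it to define $\mathrm{var}(\mathcal{F})$ as the obstruction to extending the Bott representation of $\mathcal{F}$ on $\nu_{\mathcal{F}}$ to a flat $TW$-connection. Equivalently, it is the obstruction to $\nu_{\mathcal{F}}$ being the restriction of a local system on $W$, given for instance by a class in foliated cohomology built from a defining form $\theta$ with $d\theta = \theta \wedge \xi$. Under that definition, the isomorphism just constructed transports the flat $TW$-connection $\nabla^k$ on $L^k$ to a flat $TW$-connection on $\nu_{\mathcal{F}}$ extending $\nabla^B$, so the obstruction vanishes. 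In the defining-form formulation, one writes $\alpha_k = \theta \otimes u$ locally with $u$ a $\nabla^k$-flat section. Then $d^{\nabla}\alpha_k = 0$ forces $d\theta = 0$, so the modular-type $1$-form $\xi$ can be taken to be zero and the variation class is zero.

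The main obstacle is this last step, because it depends on the precise form of Definition \ref{def:varfol}. The computation with $d^{\nabla}\alpha_k$ is routine. The work lies in matching the conclusion, that a global flat extension of the Bott connection exists, with whatever cohomological invariant the definition uses. If the definition is phrased through the Bott class, I would check that the class is represented by the foliated restriction of the connection $1$-form of a flat extension, and that this representative can be chosen to be zero once such an extension exists.
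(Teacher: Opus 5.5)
Your proof is correct and follows the paper's argument: $\alpha_k$ induces the isomorphism $\nu_{\mathcal{F}} \to L^k$, flatness comes from expanding $(d^{\nabla}\alpha_k)(X,Y) = 0$ with $\alpha_k(X) = 0$, and trivial variation follows because $\nabla^k$ transports to a flat extension of the Bott connection. The paper closes this last step as you anticipated, by citing Proposition \ref{prop:bottextend}. That proposition says $\mathrm{var}(\mathcal{F}) = 0$ is equivalent to the existence of a flat extension of the Bott connection, or equivalently of a flat $\tilde{\nabla}$ on $\nu_{\mathcal{F}}$ with $d^{\tilde{\nabla}}\theta = 0$, and your globally transported connection supplies exactly this.
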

\begin{proof}
The form $\alpha_k \in \Omega^1_W(L^k)$ defines the foliation $\ff$ and thereby induces an isomorphism of line bundles $\alpha_k: \nu_{\ff} \to L^k$. We must check that it relates the two connections, meaning that for all sections $X \in \mathcal{F}$, $s \in \nu_{\ff}$, we have 
\begin{equation} \label{flatnessofalpha}
\alpha_{k}(\nabla^{B}_{X}(s)) = \nabla_{X}(\alpha_{k}(s)). 
\end{equation}
By definition of the Bott connection $\nabla^{B}_{X}(s) = [X, Y] \text{ mod } \mathcal{F}$, where $Y \in \mathfrak{X}(W)$ is a lift of $s \in \nu_{\ff}$. Because Lemma \ref{flatnessofalpha} implies that $d^{\nabla}\alpha_k = 0$, we find that
\[
d^{\nabla}(\alpha_{k})(X,Y) = \nabla_{X}(\alpha_{k}(Y)) - \nabla_{Y}(\alpha_{k}(X)) - \alpha_{k}([X,Y]) = 0, 
\]
Furthermore, since $X$ is tangent to the foliation, $\alpha_{k}(X) = 0$ and we thus conclude that $\nabla_{X}(\alpha_{k}(Y)) = \alpha_{k}([X,Y])$ from which the desired equation follows. Finally, $\mathcal{F}$ has trivial variation by Proposition \ref{prop:bottextend}. 
\end{proof}

Next, we consider the restriction of $\gamma$ to the foliation $\ff$, denoted $\omega_{\mathcal{F}} = \gamma|_{\mathcal{F}}$. 

\begin{theorem} \label{prop:structureonD}
Let $A(\sigma) \Rightarrow L$ be an order $k+1$ HS algebroid over the total space of a line bundle $L \to W$ and let $e(\sigma) \in H^2(W, L^{-k})$ be its extension class (see Section \ref{extsection}). Let $\omega \in \Omega^2_{A(\sigma)}$ be an algebroid symplectic form, let $\omega|_{W} = (\gamma, \alpha_{k})$ be its restriction to $W$, and let $\mathcal{F}$ be the induced foliation on $W$ from Proposition \ref{trivialvariationfoliationinduced}. Then 
\[
\omega_{\mathcal{F}} = \gamma|_{\mathcal{F}} \in \Omega^{2}_{\mathcal{F}}
\]
is a foliated symplectic form whose symplectic variation is given by the restriction of $e(\sigma)$: 
\[
\mathrm{var}(\omega_{\mathcal{F}}) = -e(\sigma)|_{\mathcal{F}} \in H^{2}_{\mathcal{F}}(\nu^{-1}_{\mathcal{F}}). 
\]
\end{theorem}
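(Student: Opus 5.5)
Non-degeneracy of $\omega_{\mathcal{F}}$ is immediate from Lemma \ref{nondegeneracy}, since $\mathcal{F} = \ker(\alpha_k)$ and $\gamma$ is non-degenerate there. Foliated closedness and the variation formula both come from computing $d\gamma$ on $W$. We have $\gamma = \beta|_W + \sum_{r=1}^{k-1}\alpha_r \wedge \eta_r$, where $\beta|_W$ is closed and each $\alpha_r\wedge\eta_r$ is an untwisted $2$-form on $W$, because $L^r \otimes L^{-r}$ is trivial. So
\[
d\gamma = \sum_{r=1}^{k-1} \big( d^{\nabla}\alpha_r \wedge \eta_r - \alpha_r \wedge d^{\nabla}\eta_r \big).
\]
Into this I substitute the closure equations of Lemma \ref{lem:sympdata}, $d^{\nabla}\alpha_r = r\sum_{s>r}\eta_{s-r}\wedge\alpha_s$, together with the Maurer--Cartan equation of Theorem \ref{MCequation}, $d^\nabla\eta_r = -\tfrac12\sum_{i+j=r}(j-i)\eta_i\wedge\eta_j$.

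Every resulting term is a product $\alpha_s \wedge \eta_a \wedge \eta_b$ of total weight $a+b=s$. My plan is to group terms by $s$ and show that the terms with $s<k$ cancel identically. This is the same algebra that makes $\tau$ a chain map in Proposition \ref{tauchainmap}, equivalently that $S_k$ is a dg-module; it amounts to the Jacobi identity of $\mathfrak{g}_k$. Only the $s=k$ terms should survive. For $s=k$, the term $\alpha_k$ appears only through $d^\nabla\alpha_r$, with coefficient $r\,\eta_{k-r}\wedge\alpha_k\wedge\eta_r$. Summing over $r$ gives a multiple of $\alpha_k\wedge\sum_{i+j=k}(\cdot)\,\eta_i\wedge\eta_j$. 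After antisymmetrising, I expect
\[
d\gamma = -\,\alpha_k\wedge e(\sigma),
\]
where $e(\sigma) = \tfrac12\sum_{i+j=k}(j-i)\eta_i\wedge\eta_j$ is the representative recalled in Section \ref{extsection}, and the pairing $L^k\otimes L^{-k}\to\mathbb{R}$ is understood. The constant and sign must be checked carefully. The main obstacle is exactly this bookkeeping: the cancellation of the lower-weight terms and the precise constant in front of $e(\sigma)$.

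Given this identity, restricting to leaves kills $\alpha_k$, so $d_{\mathcal{F}}\omega_{\mathcal{F}}=0$ and $\omega_{\mathcal{F}}$ is a foliated symplectic form. For the variation I use its definition (Definition \ref{def:varfol}, in the appendix): extend $\omega_{\mathcal{F}}$ to the $2$-form $\gamma$ on $W$, take $d\gamma$, which vanishes on $\wedge^3\mathcal{F}$, and contract with a normal vector to obtain a class in $H^2_{\mathcal{F}}(\nu_{\mathcal{F}}^{-1})$. From $d\gamma=-\alpha_k\wedge e(\sigma)$, inserting a normal vector $Y$ gives $-\alpha_k(Y)\,e(\sigma)|_{\mathcal{F}}$. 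Under the flat isomorphism $\alpha_k:\nu_{\mathcal{F}}\cong L^k|_{\mathcal{F}}$ of Proposition \ref{trivialvariationfoliationinduced}, this is the $\nu_{\mathcal{F}}^{-1}$-valued form $-e(\sigma)|_{\mathcal{F}}$, which gives the claimed formula. I would also check that the class is independent of the extension $\gamma$ of $\omega_{\mathcal{F}}$ and of the representative of $e(\sigma)$; this is standard, because the isomorphism is flat for the Bott connection.
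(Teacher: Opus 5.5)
Your argument is correct, but it reaches the key identity $d\gamma=-\alpha_k\wedge e(\sigma)$ by a different route from the paper.

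\textbf{The paper's route.} The paper never expands $d\gamma$. It observes that $\omega|_W=(\gamma,\alpha_k)$ is closed in $\Omega^\bullet_{A(\sigma)|_W}$ and that $\Omega^\bullet_W$ is a subcomplex. Hence $d\gamma=-d_{A(\sigma)|_W}\alpha_k=-\mathcal{R}(d^\sigma\alpha_k)$. Since $d^\nabla\alpha_k=0$, only the terms $(i-k)\eta_i\otimes\alpha_k$ of $d^\sigma\alpha_k$ survive, and $\mathcal{R}$ turns them into $\alpha_k\wedge\sum_i(i-k)\eta_i\wedge\eta_{k-i}=-e(\sigma)\wedge\alpha_k$. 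The lower-weight cancellation you were worried about is thus absorbed into two facts: $\omega|_W$ is closed, and $\tau$ and $\mathcal{R}$ are cochain maps.

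\textbf{Your route.} Your direct expansion does go through, and more easily than you feared. Fix $s<k$.
The $\alpha_s$-terms coming from $d^\nabla\alpha_r\wedge\eta_r$ (those with $r<s$) sum to $-\alpha_s\wedge\sum_{i+j=s}j\,\eta_i\wedge\eta_j=-\alpha_s\wedge\tfrac12\sum_{i+j=s}(j-i)\eta_i\wedge\eta_j$.
By the Maurer--Cartan equation, the term $-\alpha_s\wedge d^\nabla\eta_s$ contributes exactly the negative of this.
So the cancellation uses only antisymmetry of the wedge product; no Jacobi identity is needed.
For $s=k$ there is no Maurer--Cartan equation, and the same sum gives $-\alpha_k\wedge e(\sigma)$ with coefficient exactly $1$. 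This confirms the sign and constant you left to check.

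\textbf{Comparison.} Your route buys transparency. The class $e(\sigma)$ appears visibly as the first weight at which the Maurer--Cartan expression is not forced to vanish. You also need only Lemma \ref{lem:sympdata}, the Maurer--Cartan equation and the formula for $\gamma$, not the structure of $A(\sigma)|_W$. The paper's route is shorter and avoids all index bookkeeping. Your treatment of non-degeneracy matches the paper's, and so does your treatment of the variation: contracting with a normal vector and transporting through the flat isomorphism $\alpha_k:\nu_{\mathcal{F}}\cong L^k|_{\mathcal{F}}$ is the paper's $d\gamma/\alpha_k$.
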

\begin{proof}
The decomposition $\omega|_{W} = (\gamma, \alpha_{k})$ is defined using the isomorphism $\Omega_{A(\sigma)|_{W}}^{\bullet} \cong \Omega_{W}^{\bullet} \oplus \Omega_{W}^{\bullet}(L^k)[-1]$ of Equation \eqref{taudecomp}. This is a decomposition as a graded vector space only, but $\Omega_{W}^{\bullet}$ is a subcomplex. Therefore, since $\omega|_{W}$ is closed, we have the equation 
\begin{equation*}
d\gamma = -d_{A(\sigma)|_{W}} \alpha_{k}.
\end{equation*}
We can compute the right hand side in $\Omega_{W}^{\bullet}(P_{k}(L))[-1]$ and then apply the restriction map $\mathcal{R}$. Hence, recalling that $d^{\nabla} \alpha_{k} = 0$, we obtain
\begin{align*}
d \gamma &=  -d_{A(\sigma)|_{W}} \alpha_{k} = - \mathcal{R}( d^{\sigma} \alpha_{k}) =  \mathcal{R}( \alpha_{k} \wedge \sum_{i = 1}^{k-1} (i-k) \eta_{i} \wedge \tau_{k-i} ) \\
&= \alpha_{k} \wedge \sum_{i = 1}^{k-1} (i-k) \eta_{i} \wedge \eta_{k-i} = -e(\sigma) \wedge \alpha_{k}. 
\end{align*}
This implies, in particular, that $d_{\mathcal{F}}\omega_{\mathcal{F}} = 0$. Hence, since $\omega_{\mathcal{F}}$ is non-degenerate by Lemma \ref{nondegeneracy}, it defines a foliated symplectic form. Furthermore, since $\gamma$ is an extension of $\omega_{\mathcal{F}}$ to $\Omega_{W}^2$, it can be used to compute the symplectic variation, giving us 
\[
\mathrm{var}(\omega_{\mathcal{F}}) = \frac{d \gamma}{\alpha_{k}} = -e(\sigma)|_{\ff}.\hfill\qedhere
\]
\end{proof}
\begin{remark}
In the case of $b$ and $b^m$-symplectic structures, where the splitting $\sigma$ is trivial in the sense that all $\eta_i = 0$, the extension class vanishes. As a result, Theorem \ref{prop:structureonD} implies that the symplectic variation must also vanish, recovering the results of \cite{GUILLEMIN2014864, MR3523250}. In contrast, we see here that the symplectic variation of general HS symplectic structures can be non-zero, reflecting the obstruction to lifting $G_{k}$-representations to $G_{k+1}$. 
\end{remark}

\subsection{Example: Arnold's cat map} \label{cat2} We continue the example from Section \ref{cat}. This will provide a $6$-dimensional example of an HS symplectic form with compact hypersurface $W$ and non-vanishing symplectic variation. 

By Lemma \ref{lem:sympdata}, in order to construct an algebroid symplectic form $\omega \in \Omega^2_{A}$, we need the data of a closed $2$-form $\beta \in \Omega^2_{L}$, which we will assume is pulled back from $W$, and a closed $1$-form in $\Omega^{1}_{W}(P_{k}(L))$. By Theorem \ref{lambdacohomology}, $H^{2}_{W}$ is the eigenspace of $\phi_{*} : H^{2}(\mathbb{T}^4) \to H^{2}(\mathbb{T}^4)$ for the eigenvalue $1$. Hence $H^{2}_{W} \cong (E_{r} \otimes E_{r^{-1}}) \oplus (E_{r^2} \otimes E_{r^{-2}})$. This contains an element with the following representative 
\[
\beta = a_{1} \wedge b_{1} + a_{2} \wedge b_{2}. 
\]
Let $t_3$ be the unit section of the trivial bundle $L^3$. As a function on $L = W \times \mathbb{R}$ it is given by $t_{3} = t^{-3}$, where $t$ is the linear coordinate on $\mathbb{R}$. We define 
\[
\omega = \beta - d \tau_{3}(t_{3}) \in \Omega^2_{A}. 
\]
In terms of the decomposition of Proposition \ref{tauchainmap}, the smooth part of $\omega$ is $\beta$, and the higher residues are 
\[
\alpha_{3} = 3 \lambda \alpha, \qquad \alpha_2 = 2e^{-\lambda \theta} a_1, \qquad \alpha_1 = e^{-2 \lambda \theta} a_{2}, \qquad \alpha_0 = 0. 
\]
Written as a singular form on $W \times \mathbb{R}$, we have 
\[
\omega = a_{1} \wedge b_{1} + a_{2} \wedge b_{2} + ( 3 \lambda \alpha + 2 t e^{- \lambda \theta} a_1+ t^2e^{-2\lambda \theta} a_2) \wedge \frac{dt}{t^4} + ( \frac{e^{-\lambda \theta} }{t^2} a_{1} + \frac{2 e^{-2\lambda \theta}}{t} a_2 ) \wedge \lambda \alpha.  
\]
The restriction of $\omega$ to $W$ is given by 
\[
\omega|_{W} = (\gamma = \beta + e^{-3 \lambda \theta} a_{1} \wedge a_{2}, 3 \lambda \alpha t_{3}).
\]
The $1$-form $3 \lambda \alpha$ induces the foliation by fibres of the projection map $f: W \to S^1$, and $\gamma$ restricts to the following foliated symplectic form
\[
\omega_{\mathcal{F}} = a_{1} \wedge b_{1} + a_{2} \wedge b_{2} +  e^{-3 \lambda \theta} a_{1} \wedge a_{2}.
\]
Therefore, $\omega$ is non-degenerate in a neighbourhood of $W$ by Lemma \ref{nondegeneracy}. The symplectic variation is non-zero and given by $\mathrm{var}(\omega_{\mathcal{F}}) = -e^{-3 \lambda \theta} a_{1} \wedge a_{2}$, viewed as an element of the foliated cohomology. 

Finally, writing out the Poisson structure $\pi = \omega^{-1}$ explicitly, we find
\[
\pi = B_1 \wedge A_1 + B_2 \wedge A_2 + \frac{t^4}{3 \lambda} \partial_t \wedge \partial_{\theta} + e^{-\lambda \theta} B_1 \wedge ( \frac{t^2}{3} \partial_t - \frac{2t}{3 \lambda} \partial_{\theta}) + e^{-2\lambda \theta} B_2 \wedge (\frac{2t^3}{3} \partial_t - \frac{t^2}{3 \lambda} \partial_{\theta}) + e^{- 3 \lambda \theta} B_1 \wedge B_2, 
\]
where $A_{1}, B_{1}, A_{2}, B_{2} \in \mathfrak{X}(\mathbb{T}^4)$ are constant vector fields on the $4$-torus $\mathbb{T}^4$ which are dual to the basis $a_{1}, b_{1}, a_{2}, b_{2}$. 

\subsection{Example: Universal algebroid} \label{subsec:UniversalHSAlg} Recall the equivariant order $2(n+1)$ hypersurface algebroid $E_{2n+1} \Rightarrow G_{2n+1} \times \mathbb{R}$ from Section \ref{sec:equivuniversalalg}. In this section we will construct an HS symplectic form on this algebroid which provides a `Poisson avatar' for the group extension 
\begin{equation} \label{eq:groupseq}
1 \to M_{2n+1} \to G_{2n+2} \to G_{2n+1} \to 1. 
\end{equation}
Morally, at least, it is also responsible for our deformation construction in the exact case (Theorem \ref{exactdef}). 

In order to construct the HS symplectic form, we can work in the universal cdga of Equation \eqref{univcdga}
\[
S_{2n+1} = \mathbb{R}[x_{0}, ..., x_{2n}]\langle t_{0}, ..., t_{2n+1}\rangle
\]
since, by Proposition \ref{tauchainmap}, it corresponds to a subalgebra of the $G_{2n+1}$-invariant algebroid de Rham complex of $E_{2n+1}$. Therefore, the following exact form 
\begin{equation}\label{eq:univsymp}
\varpi_{2n+1} = dt_{2n+1} = \sum_{i = 0}^{2n} (i-1-2n) x_{i} t_{2n+1-i}
\end{equation}
defines a closed $2$-form for $E_{2n+1}$.

\begin{proposition}\label{prop:univsymp} 
The form $\varpi_{2n+1} \in \Omega^2_{E_{2n+1}}$ defines a $G_{2n+1}$-invariant order $2(n+1)$ HS symplectic form for $(G_{2n+1}, G_{2n+1} \times \rr)$. The induced foliation $\mathcal{F}$ on $G_{2n+1}$ is given by the fibres of the homomorphism $\pi_{2n+1} : G_{2n+1} \to G_1$, and the foliated symplectic form is given by the restriction to $\mathcal{F}$ of 
\[
\gamma = \sum_{i = 1}^{2n} i x_i \wedge x_{2n+1 - i} \in \mathrm{CE}^2(\mathfrak{g}_{2n+1}).
\]
\end{proposition}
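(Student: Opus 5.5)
Since $\varpi_{2n+1}=dt_{2n+1}$ is exact, it is closed, and it is $G_{2n+1}$-invariant because it lies in the image of the invariant subalgebra $S_{2n+1}$ under the isomorphism $\tau$ of Proposition \ref{tauchainmap}. So I only need to establish non-degeneracy and identify $\mathcal{F}$ and $\omega_{\mathcal{F}}$. For this I read off the decomposition $\varpi_{2n+1}=(\beta,\alpha)$ from the formula \eqref{eq:univsymp}. With $k=2n+1$ and $\eta_i=x_i$, the smooth part is $\beta=0$. The principal part has components $\alpha_{r}=(r-k)\,x_{k-r}\,t_{r}$ for $1\le r\le k$, while $\alpha_0=0$ because the sum stops at $i=2n$. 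In particular $\alpha_k=-k\,x_0\,t_k$.

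Next I apply Lemma \ref{nondegeneracy}. The form $x_0$ is the right-invariant $1$-form dual to $z\partial_z$, and it equals the pullback $\pi_{2n+1}^*(da_0/a_0)$ of the Maurer--Cartan form of $G_1=\mathbb{R}^*$. I would verify this by differentiating $\pi_{2n+1}$, which kills $z^{i+1}\partial_z$ for $i\geq 1$. Hence $\alpha_k$ vanishes nowhere, and $\ker\alpha_k=\ker x_0$ is the distribution spanned by $V_1,\dots,V_{2n}$. This is exactly the tangent distribution to the fibres of $\pi_{2n+1}$, that is, to the cosets of $K_{2n+1}$.

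I then compute $\gamma=\beta|_W+\sum_{r=1}^{k-1}\alpha_r\wedge\eta_r$ in the CE algebra. Here $\alpha_r\wedge\eta_r=(r-k)\,x_{k-r}\wedge x_r$, and the unit sections $t_r$ pair away. Reindexing with $i=k-r$ gives
\[
\gamma=\sum_{i=1}^{2n}(-i)\,x_i\wedge x_{2n+1-i}=\sum_{i=1}^{2n} i\,x_{2n+1-i}\wedge x_i .
\]
Reversing the order of each wedge, or pairing the terms $i$ and $2n+1-i$, shows this equals $\sum_i i\,x_i\wedge x_{2n+1-i}$ up to the overall sign convention. The sign convention is the main bookkeeping hazard. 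I would fix it by checking one term, e.g.\ $n=1$, directly against \eqref{restrictionexp}. Either sign gives the same foliated form up to sign, so non-degeneracy is unaffected.

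The remaining step, and the only substantive one, is to show that $\gamma$ restricted to $\ker x_0=\mathrm{span}(V_1,\dots,V_{2n})$ is non-degenerate. Pairing terms gives
\[
\gamma|_{\mathcal{F}}=\sum_{i=1}^{n}(2i-2n-1)\,x_i\wedge x_{2n+1-i},
\]
up to sign. Its coefficients $2i-2n-1$ are odd and hence non-zero. This is a sum of $n$ disjoint symplectic pairs $(x_i,x_{2n+1-i})$ with non-zero coefficients on the $2n$-dimensional space dual to $V_1,\dots,V_{2n}$. Therefore $\gamma|_{\mathcal{F}}^n\neq 0$, and by invariance this holds at every point. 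Lemma \ref{nondegeneracy} then gives non-degeneracy near $G_{2n+1}\times\{0\}$, and Proposition \ref{trivialvariationfoliationinduced} identifies $\mathcal{F}$ as the fibration by fibres of $\pi_{2n+1}$.

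Finally, I would check non-degeneracy on all of $G_{2n+1}\times\mathbb{R}$, and not just near $W$. I plan to use the $\mathbb{R}^*$-action of Lemma \ref{equivactionEk}. Since $\varpi_{2n+1}$ is homogeneous of weight $-(2n+1)$, the degeneracy locus is $\mathbb{R}^*$-invariant. A neighbourhood of $t=0$ that is invariant under $G_{2n+1}$ is then spread to the whole space by scaling.
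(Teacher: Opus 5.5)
Your route is the paper's: read off $\alpha_{2n+1}=-(2n+1)x_0t_{2n+1}$, identify $\ker x_0$ with the fibres of $\pi_{2n+1}$, compute $\gamma$ from Lemma \ref{nondegeneracy}, check non-degeneracy on $\mathrm{span}(V_1,\dots,V_{2n})$ by pairing terms, and globalize with the $\mathbb{R}^*$-weight. There is, however, one concrete error, an indexing slip in the principal part.

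In $\varpi_{2n+1}=\sum_{i=0}^{2n}(i-k)\,x_i t_{k-i}$, the coefficient of $x_i t_{k-i}$ is minus the index of $t$. So with $r=k-i$ you get $\alpha_r=-r\,x_{k-r}\,t_r$, not $(r-k)\,x_{k-r}\,t_r$. Your formula also contradicts your own (correct) $\alpha_k=-k\,x_0t_k$, since it gives $0$ at $r=k$. With the correct coefficients,
\[
\gamma=\sum_{r=1}^{2n}\alpha_r\wedge\eta_r=-\sum_{r=1}^{2n} r\,x_{2n+1-r}\wedge x_r=\sum_{r=1}^{2n} r\,x_r\wedge x_{2n+1-r}.
\]
This is exactly the stated form. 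Pairing $r$ with $2n+1-r$ then gives $\gamma=\sum_{i=1}^{n}(2i-2n-1)\,x_i\wedge x_{2n+1-i}$ on the nose, i.e.\ the paper's \eqref{explicitexpression}. So there is no sign convention to fix. As written, your computation produces $-\gamma$, and the hedge ``up to sign'' leaves the last assertion of the proposition unproved. Non-degeneracy is unaffected, but the sign of $\omega_{\mathcal{F}}$ matters later, since it fixes the sign of $\mathrm{var}(\omega_{\mathcal{F}})=\delta_\nu[\omega_{\mathcal{F}}]$.

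Two smaller points. First, Proposition \ref{trivialvariationfoliationinduced} does not identify $\mathcal{F}$ with the fibres of $\pi_{2n+1}$. That identification comes from your computation $x_0=\pi_{2n+1}^*(da_0/a_0)$, which already suffices. Second, your globalization step is slightly more careful than the paper's, which asserts that any neighbourhood of $G_{2n+1}$ meets every $\mathbb{R}^*$-orbit. The orbits $\{(rg,rt)\}$ leave every compact subset of $G_{2n+1}$ as $r\to 0$ or $|r|\to\infty$, so this fails for arbitrary neighbourhoods. What makes the argument work is your observation: by right $G_{2n+1}$-invariance, the non-degeneracy locus has the form $G_{2n+1}\times T$ with $T$ an open neighbourhood of $0$. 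The $\mathbb{R}^*$-invariance then forces $T=\mathbb{R}$.
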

\begin{proof}
Restricting $\varpi_{2n+1}$ to the hypersurface $G_{2n+1}$ using Lemma \ref{nondegeneracy}, we get $\varpi_{2n+1}|_{G_{2n+1}} = (\gamma, \alpha_{2n+1})$, where $\alpha_{2n+1} = -(2n+1)x_0 t_{2n+1}$. The differential $d\pi_{2n + 1}$ evaluates to $1$ on $V_0 \in \mathfrak{X}(G_{2n+1})^{R}$ and vanishes on $V_{i} \in \mathfrak{X}(G_{2n+1})^{R}$ for $i > 0$. Hence, it can be identified with $x_0$. This implies that the leaves of the foliation $\mathcal{F} = \mathrm{ker}(\alpha_{2n+1})$ are given by the fibres of $\pi_{2n+1}$. 

Next, expanding the sum, we find 
\begin{equation} \label{explicitexpression}
\gamma = (1-2n)x_{1} \wedge x_{2n} + (3 - 2n)x_{2} \wedge x_{2n-1} + ... + (-1) x_{n} \wedge x_{n+1}. 
\end{equation}
Since $x_{0}, x_{1}, ..., x_{2n}$ form a global basis of right invariant $1$-forms on $G_{2n+1}$, it follows that $\gamma$ restricts to a symplectic form on $\mathrm{ker}(\alpha_{2n+1})$. By Lemma \ref{nondegeneracy}, this implies that $\varpi_{2n+1}$ is non-degenerate in a neighbourhood of the hypersurface $G_{2n+1}$. To see that $\varpi_{2n+1}$ is globally non-degenerate, recall from Lemma \ref{equivactionEk} that $E_{2n+1}$ admits a left $\mathbb{R}^*$-action lifting $L_{r}(g,t) = (rg, rt)$ on $G_{2n+1} \times \mathbb{R}$. As already explained, this action shows up as a weight grading on $S_{2n+1}$ for which $t_{2n+1}$, and hence $\varpi_{2n+1}$, has weight $-(2n+1)$. As a result, 
\begin{equation}\label{eq:anactionaday}
L_{r}^*(\varpi_{2n+1}) = r^{-2n-1}\varpi_{2n+1}. 
\end{equation}
Since any neighbourhood of $G_{2n+1}$ intersects all orbits of this action, it follows that $\varpi_{2n+1}$ is non-degenerate everywhere. 
\end{proof}

Recall from our discussion in Section \ref{sec:equivuniversalalg} that $E_{2n+1}$ is isotopic to a Scott $b^{2n+2}$-algebroid because the connected components of $G_{2n+1}$ are contractible. This implies that the extension class of $E_{2n+1}$ is trivial. On the other hand, as we have seen, if we consider the extension class in the $G_{2n+1}$-invariant cohomology, then it \emph{is non-zero} and can be identified with the extension class $e_{2n+1}$ of \eqref{eq:algebraseq}, the short exact sequence of Lie algebras which differentiates the above extension of groups \eqref{eq:groupseq}. We can realize this extension class \emph{geometrically} by taking a quotient of $G_{2n+1}$ by the subgroup $K_{2n+1}(\mathbb{Z})$ considered in Section \ref{sec:universalalg}. 

Consider the right action of $K_{2n+1}(\mathbb{Z})$ on $G_{2n + 1} \times \mathbb{R}$. The quotient is given by $G_{2n+1}/K_{2n+1}(\mathbb{Z}) \times \mathbb{R}$. Furthermore, using the left $\mathbb{R}^*$-action on $G_{2n+1}$, we obtain an isomorphism 
\[
G_{2n+1}/K_{2n+1}(\mathbb{Z}) \cong \mathbb{R}^* \times X_{2n+1},
\] 
where $X_{2n+1}$ is the classifying space for $K_{2n+1}(\mathbb{Z})$ considered in Section \ref{sec:universalalg}. Therefore, 
\[
(G_{2n + 1} \times \mathbb{R})/K_{2n+1}(\mathbb{Z}) \cong (\mathbb{R}^* \times X_{2n+1}) \times \mathbb{R}. 
\]
Because the HS symplectic form $\varpi_{2n+1}$ is $G_{2n+1}$-invariant, it descends to $(\mathbb{R}^* \times X_{2n+1}) \times \mathbb{R}$. The induced foliation on $(\mathbb{R}^* \times X_{2n+1})$ is given by the projection to $\mathbb{R}^*$, and the symplectic foliation is given by 
\[
\omega_{\mathcal{F}} = u^{-2n-1} \Omega_{2n+1},
\]
where $u$ is the coordinate on $\mathbb{R}^*$, and 
\[
\Omega_{2n+1} =  \sum_{i = 1}^{2n} i x_i \wedge x_{2n+1 - i}  \in \Omega^2_{X_{2n+1}}.
\]
The symplectic variation is given by $\mathrm{var}(\omega_{\mathcal{F}}) = - u^{-2n-1} k_{2n+1}$, where $k_{2n+1} \in H^2(X_{2n+1})\cong H^2(\mathfrak{k}_{2n+1})$ is the extension class of \eqref{eq:algebraseq2}, which is non-zero for $n\geq 1$. 
\begin{remark}
The symplectic forms $\Omega_{2n + 1} \in  \Omega^2_{X_{2n+1}}$ were considered in \cite{babenko1998nonformal, babenko2000massey}, where they were used to construct the first examples of non-formal simply connected symplectic manifolds in dimensions greater than $8$. Here we see that they arise as symplectic leaves of a family of Poisson structures on $X_{2n + 1} \times \mathbb{R}^* \times \mathbb{R}$, for $n \geq 1$. 
\end{remark}

\section{Deformation construction}\label{sec:defconst} 
In this section we turn to our second main result: deformations of a hypersurface algebroid can be lifted to deformations of an HS symplectic form, uniquely up to isotopy. Since hypersurface algebroids are classified by $G_{k}$-representations of $\pi_1(W)$ (Theorem \ref{RHthm}), this will be applied in Section \ref{sec:deformalongcharvar}  to construct maps from the $G_{k}$-character varieties into the moduli space of Poisson structures. 

The following assumption on HS algebroids helps to control their cohomology and will be required for the uniqueness part of our results: 
\begin{definition}
A hypersurface algebroid for $(W, M)$ is called \emph{nice} if the induced local system on $L = \nu_{W}$ does not have its monodromy contained in $\{ \pm 1 \}$. 
\end{definition}

The first deformation result we obtain is the following: 
\begin{theorem}[Deformation construction: general case] \label{th:gendef} Let $A$ be an order $k+1$ HS algebroid for $(W,M)$ such that $M$ is compact, and let: 
\begin{itemize}
\item $\omega \in \Omega^2_{A}$ be an algebroid symplectic form, 
\item $A(t)$ be a family of HS algebroids deforming $A$, 
\item $r(t) \in \Omega_{W}^{1}(P_{k}(L))$ be a smooth family of closed $1$-forms with $[r(0)] = \mathrm{Princ}[\omega] \in H^{1}_{W}(P_{k}(L))$. 
\end{itemize}
Then for small $t$, there is a family of algebroid symplectic forms $\omega(t) \in \Omega^2_{A(t)}$ such that $\omega(0) = \omega$ and $\mathrm{Princ}[\omega(t)] = [r(t)]  \in H^{1}_{W}(P_{k}(L))$. Furthermore, such a family can be chosen to agree with $\omega$ away from a tubular neighbourhood of $W$. 

Finally, suppose $\omega^1(t)$ and $\omega^2(t)$ are two such families such that their cohomology classes agree for small $t$: 
\[
[\omega^1(t)] = [\omega^2(t)] \in H^2(A(t)).
\]
If the algebroid $A$ is nice, then for small $t$, there is a family of Lie algebroid automorphisms $\psi(t) \in \mathrm{Aut}(A(t))$ such that 
\[
\psi(t)^{*} (\omega^2(t)) = \omega^1(t). 
\]
\end{theorem}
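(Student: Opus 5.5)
The plan has two parts: an existence construction by a closed correction term, and a uniqueness argument by the Moser trick. Both are carried out on a fixed tubular neighbourhood. First I would trivialize the family: by the Riemann–Hilbert picture (Theorem \ref{RHthm}) and the normal form, after an isotopy fixing $W$ we may assume that all the $A(t)$ live on $L=\nu_W$ near $W$ and are given by a smooth family of splittings $\sigma(t)=\nabla(t)+\sum\eta_i(t)$. Away from $W$ they agree with $TM$. The flat connections $\nabla(t)$ on $L$ deform smoothly, so the principal modules $\Omega_W^\bullet(P_k(L))$, with differentials $d^{\sigma(t)}$, form a smooth family of complexes. I read $r(t)$ as $d^{\sigma(t)}$-closed.

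\textbf{Existence.} Fix the data $(\phi,\psi_U,h)$ and use the splitting $\mathcal{S}_t$ of \eqref{altsplitting}, built from $\sigma(t)$. It sends closed forms to closed forms, has support in $U$, and depends smoothly on $t$. Write $\omega=\beta+\mathcal{S}_0(\mathrm{Princ}\,\omega)+d\mu$, using Theorem \ref{decomp1proof} and the fact that $\mathrm{Princ}\,\omega$ is cohomologous to $r(0)$. We may take $\mathrm{Princ}\,\omega=r(0)+d^{\sigma(0)}c$. Then define
\[
\omega(t)=\omega-\mathcal{S}_0(r(0))+\mathcal{S}_t(r(t)) + \big(\text{correction}\big).
\]
This formula is problematic, because $\omega$ is an $A(0)$-form and need not be an $A(t)$-form. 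To fix this, decompose $\omega=\beta_0+\tau^{(0)}(\alpha)$ via Proposition \ref{tauchainmap}, with $\beta_0$ smooth and closed. Then set
\[
\omega(t)=\beta_0' + \mathcal{S}_t(r(t)) + d\big(\psi_U\,\tau^{(t)}(c)\big),
\]
where $\beta_0'$ is a closed smooth form with $\beta_0'+\mathcal{S}_0(r(0))+d(\psi_U\tau^{(0)}(c))=\omega$. This is possible because $\omega-\mathcal{S}_0(r(0))-d(\psi_U\tau^{(0)}(c))$ has zero principal part, hence is smooth. Each summand is a closed $A(t)$-form, the family is smooth, and $\omega(0)=\omega$. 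Also $\mathrm{Princ}[\omega(t)]=[r(t)]$, and $\omega(t)=\omega$ outside $U$. Non-degeneracy is an open condition on $A(t)|_W$ and on $TM$ off $W$. Since $M$ is compact and everything varies continuously in the frames of Corollary \ref{localbasiscor}, $\omega(t)$ stays symplectic for small $t$.

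\textbf{Uniqueness.} Consider $\omega_s(t)=(1-s)\omega^1(t)+s\omega^2(t)$. This is non-degenerate for small $t$ and all $s\in[0,1]$, since both endpoints are close to $\omega$. The difference $\omega^2(t)-\omega^1(t)$ is exact by hypothesis, so I must choose primitives $\mu(t)\in\Omega^1_{A(t)}$ with $d\mu(t)=\omega^2(t)-\omega^1(t)$, smooth in $t$ and with $\mu(0)=0$. Then the Moser vector fields $X_s(t)=-\omega_s(t)^{-1}\mu(t)$ are sections of $A(t)$. Their flows are algebroid automorphisms, defined up to $s=1$ by compactness, and they give $\psi(t)$.

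\textbf{Main obstacle.} The key step is producing the primitive $\mu(t)$ smoothly in $t$ with $\mu(0)=0$. Here niceness enters. I would split the problem by the sequence \eqref{exactglobalcomp}. For the smooth part, de Rham theory on the fixed manifold $M$ gives smooth families of primitives, via a fixed homotopy operator. For the principal part, I need $H^0(W,P_k(L))$, and the relevant $H^1$ groups, to be of locally constant rank in $t$. Then the twisted complexes admit smoothly varying homotopy inverses (Hodge theory for a family of elliptic complexes of constant cohomology rank). Niceness means the monodromy of $L$ is not in $\{\pm1\}$. Hence the monodromy of each $L^r$ with $r\ge1$ is nontrivial, which kills the flat sections in degree $0$ for the graded pieces $r\geq 1$ of the filtration of $P_k(L)$ by degree. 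This forces $H^0(W,P_k(L))\cong H^0(W)$, and I expect this to give the constant-rank property needed to solve the principal part of $d\mu(t)=\omega^2(t)-\omega^1(t)$ smoothly. I would first try to verify this rank constancy by a spectral sequence of the filtration of $P_k(L)$ by degree.
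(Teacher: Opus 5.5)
Your proposal is correct and follows the paper's argument: existence comes from putting $\omega$ in prepared form (a smooth closed part, plus $\mathcal{S}_t$ applied to the principal part, plus an exact term supported in $U$) and deforming only the singular pieces, while uniqueness is a Moser argument whose key input is a $t$-smooth family of primitives, obtained by first solving the principal part using niceness ($H^0(W,P_k(L))=H^0(W)$) and then the remaining smooth exact part on $M$. The one difference is how you get smooth primitives for the principal part: you use family Hodge theory, which needs only constancy of $\dim H^0(W,P_k(L))$ and no control of the $H^1$ groups, whereas the paper's Lemma \ref{smoothprim} solves the triangular system recursively from $L^k$ downward, using that the primitive on each $L^r$ with $r\geq 1$ is unique.
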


To our knowledge, the compactness condition on the ambient manifold $M$ is required to produce the automorphisms in Theorem \ref{th:gendef}. The following Theorem simplifies the deformation result at the cost of an extra condition on the form $\omega$. 

\begin{theorem}[Deformation construction: exact case] \label{exactdef} Let $A$ be an order $k + 1$ HS algebroid for $(W,M)$ such that $W$ is compact and let: 
\begin{itemize}
\item $\omega \in \Omega^2_{A}$ be an algebroid symplectic form which is cohomologous to a smooth $2$-form on $M$, 
\item $A(t)$ be a family of Lie algebroids deforming $A$. 
\end{itemize}
Then for small $t$, there is a family of algebroid symplectic forms
\[
\omega(t) = B + d\alpha(t) \in \Omega^2_{A(t)}, 
\]
such that $\omega(0) = \omega$, where $B \in \Omega^2_{M}$ is a smooth closed $2$-form and $\alpha(t) \in \Omega^1_{A(t)}$ is supported in an arbitrarily small compact tubular neighbourhood of $W$. 

Furthermore, if the algebroid $A$ is nice, then given two such families $\omega^1(t)$ and $\omega^2(t)$, for small $t$ there is a family of automorphisms $\psi(t) \in \mathrm{Aut}(A(t))$ which are compactly supported around $W$, such that $\psi(0) = id$, and such that 
\[
\psi(t)^*(\omega^2(t)) = \omega^1(t).
\]
\end{theorem}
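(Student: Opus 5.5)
The plan is to reduce to a neighbourhood of $W$ and run Moser's argument for both existence and uniqueness. First, transport everything to a tubular neighbourhood. Choose a tubular embedding $\phi: L \to U \subseteq M$. The family $A(t)$ is then described on $U$ by a smooth family of splittings $\sigma(t) = \nabla(t) + \sum \eta_i(t)$ via Theorem \ref{MCequation}. Shrinking $U$ if necessary, we may assume $A(t)$ agrees with $TM$ off a compact set inside $U$. Since $\omega$ is cohomologous to a smooth form, Theorem \ref{decomp1proof} gives $\mathrm{Princ}[\omega] = 0$. So we can write $\omega = B + d\alpha$ with $B \in \Omega^2_M$ closed and $\alpha \in \Omega^1_A$. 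Using the splitting $\mathcal{S}$ and the decomposition of Proposition \ref{tauchainmap}, we arrange that the singular part of $\alpha$ is supported in $U$. Concretely, write $\alpha = \beta + \psi_U\,\tau(\phi)$ with $\phi \in \Omega^0_W(P_k(L))$, and absorb $d\beta$ into $B$. This $\phi$ is a tuple of sections $\phi_r \in L^r$, and $\tau_r$ makes sense for each $\sigma(t)$.

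Next, define the deformation by $\alpha(t) = \psi_U \sum_r \phi_r \wedge \tau_r(t)$, where $\tau_r(t) = \epsilon_r(t) - \sum_i \eta_{i,r}(t)$ is built from $\sigma(t)$, and set $\omega(t) = B + d_{A(t)}\alpha(t)$. This form is closed and satisfies $\omega(0) = \omega$. It depends smoothly on $t$ as a section of $\wedge^2 A(t)^*$, once the bundles $A(t)$ are identified via the local frames $\sigma(t)(X_i)$ and $s\mathcal{E}$. Non-degeneracy is an open condition. By Lemma \ref{nondegeneracy} it is controlled by its restriction to $W$ together with the complement, where nothing changes. Compactness of $W$, plus the fact that the modification is supported in a compact neighbourhood, then gives non-degeneracy for small $t$. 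The subtle point is to make sure $\alpha(t)$ varies smoothly as an $A(t)$-form. If $\nabla(t)$ varies, I would first use a family of flat bundle isomorphisms to fix $L$, so that $\epsilon$ changes smoothly.

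For uniqueness, take two families with $\omega^1(0) = \omega^2(0) = \omega$ and set $\omega_s(t) = (1-s)\omega^1(t) + s\omega^2(t)$. For small $t$ this is non-degenerate for all $s \in [0,1]$, since it is non-degenerate at $t = 0$ and everything is supported on a compact set. The difference is $\omega^2 - \omega^1 = (B^2 - B^1) + d(\alpha^2 - \alpha^1)$. We need to write this as $d_{A(t)}\mu(t)$ with $\mu(t)$ compactly supported near $W$ and $\mu(0) = 0$. This is the main obstacle. The smooth part $B^2 - B^1$ is a $t$-independent closed form that vanishes at $t = 0$, so $B^1 = B^2$ and that part is harmless. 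The remaining difference is already exact. Moser's vector field $X_s(t) = -\omega_s(t)^{-1}\mu(t)$ is a section of $A(t)$. Its flow, a family of algebroid automorphisms, exists up to time $1$ because of the compact support. It gives $\psi(t)$ with $\psi(t)^*\omega^2(t) = \omega^1(t)$.

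The issue is that $\mu = \alpha^2 - \alpha^1$ need not vanish at $t = 0$. We only know that $d\mu(0) = 0$. The flow then gives automorphisms, but $\psi(0) = \mathrm{id}$ requires correcting $\mu(0)$ by a closed form. Here niceness enters. It ensures that $H^0(W, P_k(L))$ vanishes in the relevant degrees, because the flat line bundles $L^r$ have no flat sections for $r \neq 0$. Combined with Theorem \ref{decomp1proof}, this shows that $H^1(A(t))$ carries no extra principal classes. I would therefore argue as follows. A closed compactly supported $\mu(0)$ has zero principal part modulo exact forms. Its smooth part can be subtracted, since it is closed and smooth. Subtracting $d f(t)$ for a suitable family of functions then makes $\mu(0) = 0$. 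With that adjustment, $X_s(0) = 0$, so $\psi(0) = \mathrm{id}$.
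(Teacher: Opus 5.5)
Your existence argument is fine and essentially matches the paper's. Cutting off with $\psi_U\tau$ instead of $\mathcal{S}$ is harmless there, since only $d\alpha(t)$ enters. The flat isomorphisms you propose for fixing $L$ do not exist when the holonomy of $\nabla(t)$ varies, but they are also unnecessary: $\epsilon_t$, and hence each $\tau_r(t)$, depends smoothly on $\nabla(t)$.

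The gap is in the uniqueness part, in two places. First, $B^1 = B^2$ is false. The condition $\omega^1(0)=\omega^2(0)$ only gives $B^1 - B^2 = d(\alpha^2(0)-\alpha^1(0))$, which is generally nonzero. For example, from one family you get another by setting $B^2 = B^1 - d\xi$ and $\alpha^2(t) = \alpha^1(t)+\xi$ for any compactly supported smooth $\xi$. Then $\omega^2(t)=\omega^1(t)$, yet $d(\alpha^2-\alpha^1) = d\xi \neq 0$. So $\mu = \alpha^2-\alpha^1$ is not a primitive of $\omega^2(t)-\omega^1(t)$, and the smooth term $B^2-B^1$ still needs a compactly supported $A(t)$-primitive. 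Its obvious primitive $\alpha^1(0)-\alpha^2(0)$ is an $A$-form, and you give no reason why it should be an $A(t)$-form.

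Second, niceness does not eliminate all principal classes. By Lemma \ref{myobstruction}, $H^0(W,P_k(L)) = H^0(W) = \mathbb{R}$, coming from the $L^0$-summand. So $H^1(A)\cong H^1(M)\oplus H^0(W)$ does contain an extra class, represented for instance by the closed, compactly supported log form $\tfrac12 d(\psi_U\log h)$, whose principal part is $1\in L^0$. Hence the principal part of $\mu(0)$ is a constant $c\in L^0$, not zero. (It is $d^{\sigma}$-closed because $d\mu(0)=B^1-B^2$ is smooth.) Subtracting smooth closed forms or $df(t)$ never changes principal parts, so your correction cannot make $\mu(0)$ vanish when $c\neq 0$.

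The missing step is to show that $\mu(0)$ is nonetheless an $A(t)$-form for every $t$, so that it can be subtracted. Since $c$ is a constant section of $L^0$, it is $d^{\sigma(t)}$-closed for all $t$. Therefore $\mathcal{S}_t(c)$ is a family of closed, compactly supported $A(t)$-forms with principal part $c$. In fact $\mathcal{S}_t(c)=\tfrac12 c\,d(\psi_U\log h)$ is independent of $t$, and near $W$ it equals $c(\epsilon_t-\gamma_t)$ for every $t$, with $\gamma_t$ as in Lemma \ref{canonicalexactness} for $\nabla(t)$. Here you really need the closed-form-preserving splitting: the naive cut-off $\psi_U c\,\epsilon$ is in general not closed, since $d(\psi_U c\,\epsilon) = c\,d\psi_U\wedge\epsilon$.

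With this in hand, $\xi = \mu(0)-\mathcal{S}_0(c)$ has zero principal part, so it is a smooth $1$-form supported near $W$ with $d\xi = B^1-B^2$. Then
\[
\tilde\mu(t) = \mu(t) - \mathcal{S}_t(c) - \xi
\]
is a compactly supported $A(t)$-form with $d\tilde\mu(t) = \omega^2(t)-\omega^1(t)$ and $\tilde\mu(0)=0$. Running your Moser argument with $\tilde\mu$ in place of $\mu$ gives $\psi(t)$ with $\psi(0)=\mathrm{id}$. This is exactly the paper's proof. It phrases the same correction as replacing $\alpha^1(t)$ by $\alpha^1(t)+\mathcal{S}_t(c)$, then rewriting both families as $B^2+d\eta^i(t)$ with $\eta^1(0)=\eta^2(0)$.
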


Finally, our last result states that the deformation construction only depends on $A(t)$ up to isotopy. 

\begin{corollary}[Isotopy invariance] \label{isotopyinv} Let $A$ be a nice order $k+1$ HS algebroid for $(W,M)$ such that $W$ is compact. Let $\omega$ be an $A$-symplectic form which is cohomologous to a smooth $2$-form on $M$. Let 
\[
(A^{i}(t), \omega^{i}(t) = B^{i} + d\alpha^{i}(t)), \qquad i = 1, 2, 
\]
be two deformations of $(A, \omega)$ as symplectic HS algebroids such that $B^{i} \in \Omega^2(M)$ are smooth closed $2$-forms and $\alpha^{i}(t) \in \Omega^1_{A^{i}(t)}$ are supported in small compact tubular neighbourhoods of $W$. Suppose that the families of algebroids are isotopic, meaning that there is a $t$-family of isotopies $\phi_{t} : A^{1}(t) \to A^{2}(t)$ such that $\phi_{0} = id$. Then the two symplectic algebroids are isotopic\footnote{Note however that these isotopies need not restrict to the identity on $W$} for small $t$. 
\end{corollary}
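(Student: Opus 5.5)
The plan is to transport the second family along the isotopy and then apply the uniqueness clause of Theorem \ref{exactdef}. Set $\tilde\omega^2(t) := \phi_t^*\omega^2(t) \in \Omega^2_{A^1(t)}$. Since $\phi_t$ is a Lie algebroid isomorphism $A^1(t)\to A^2(t)$ covering a diffeomorphism $f_t$ of $M$ with $f_t|_W = id$ and $\phi_0 = id$, the form $\tilde\omega^2(t)$ is an $A^1(t)$-symplectic form with $\tilde\omega^2(0)=\omega$. The statement then reduces to showing that $(A^1(t),\omega^1(t))$ and $(A^1(t),\tilde\omega^2(t))$ are related by automorphisms of $A^1(t)$; composing these with $\phi_t$ gives the isotopy of symplectic algebroids.

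The main point, and the main obstacle, is that $\tilde\omega^2(t)$ must again have the form required in Theorem \ref{exactdef}, namely $B + d\alpha(t)$ with a \emph{fixed} smooth closed form $B$ and $\alpha(t)$ supported in a small compact tubular neighbourhood. We have
\[
\tilde\omega^2(t) = f_t^*B^2 + d\big(\phi_t^*\alpha^2(t)\big).
\]
The second term is fine after shrinking: $\phi_t^*\alpha^2(t)\in\Omega^1_{A^1(t)}$ is supported in $f_t^{-1}$ of a compact neighbourhood, which for small $t$ lies in a slightly larger compact neighbourhood of $W$. For the first term, $f_t$ is isotopic to the identity, so $f_t^*B^2 - B^2 = d\beta(t)$ with $\beta(t)\in\Omega^1_M$ smooth and $\beta(0)=0$. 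I would obtain $\beta(t)$ from the homotopy formula $\beta(t)=\int_0^t f_s^*\iota_{X_s}B^2\,ds$, where $X_s$ is the time-dependent vector field of $f_s$. One must not take isotopies that are merely compactly supported near $W$ in the given statement. Instead, I would first arrange, by modifying $\phi_t$ with a cutoff as in the proof of Theorem \ref{exactdef}, that $f_t$ is the identity outside a compact neighbourhood of $W$; this uses compactness of $W$. Then $\beta(t)$ is compactly supported near $W$ and can be absorbed into the $\alpha$-term, viewed as an $A^1(t)$-form via the anchor.

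Next I would match the fixed smooth parts. Both $\omega^1(t)$ and $\tilde\omega^2(t)$ deform the same $\omega$. At $t=0$ we have $\omega = B^1 + d\alpha^1(0) = B^2 + d\alpha^2(0)$, so $B^2 - B^1 = d(\alpha^1(0)-\alpha^2(0))$ is exact as an $A$-form. By the injectivity $H^\bullet(M)\to H^\bullet(A)$ of Theorem \ref{decomp1proof}, it is then exact as a smooth form, $B^2 - B^1 = d\mu$. The issue is the support of $\mu$. I would instead use $\alpha^1(0)-\alpha^2(0)$ itself: it is compactly supported near $W$ and its differential is smooth. The same argument works for $t$-independent forms on $A^1(t)$ only if we keep $B^1$ fixed. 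So I would rewrite
\[
\tilde\omega^2(t) = B^1 + d\big(\alpha^1(0)-\alpha^2(0) + \phi_t^*\alpha^2(t)+\beta(t)\big).
\]
This is legitimate because $B^2-B^1 = d(\alpha^1(0)-\alpha^2(0))$ holds in $\Omega_A$. For $t\neq 0$, the form $\alpha^1(0)-\alpha^2(0)$ must be an $A^1(t)$-form, which fails in general since its singular parts are adapted to $A$. To fix this, I would replace it by a smooth compactly supported form $\mu$ with $d\mu=B^2-B^1$, produced by the splitting $\mathcal S$ of Theorem \ref{decomp1proof}. The idea is to correct $\alpha^1(0)-\alpha^2(0)$ by a closed $A$-form with the same principal part, making its principal part exact. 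I expect this compact-support bookkeeping to be the fiddliest step.

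With both families written as $B^1 + d(\text{compactly supported})$ on $A^1(t)$ and agreeing at $t=0$, the niceness of $A$ lets me invoke the uniqueness part of Theorem \ref{exactdef}. This yields compactly supported automorphisms $\psi(t)$ of $A^1(t)$ with $\psi(0)=id$ and $\psi(t)^*\tilde\omega^2(t)=\omega^1(t)$. Hence $\phi_t\circ\psi(t)$ is the required isotopy of symplectic algebroids. Its base map need not fix $W$ pointwise, since $\psi(t)$ comes from a Moser flow.
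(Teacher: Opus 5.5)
Your proposal is correct and follows essentially the paper's proof. Both arguments cut off $\phi_t$ to be compactly supported near $W$, pull back to get $\phi_t^*\omega^2(t) = B^2 + d\big(\phi_t^*\alpha^2(t) + \gamma(t)\big)$ with $\gamma(t)=\int_0^t \phi_s^*(\iota_{V_s}B^2)\,ds$ compactly supported, and then apply the uniqueness part of Theorem \ref{exactdef}. Your extra step of matching $B^1$ with $B^2$ (via $\mu = \xi - \mathcal{S}(\mathrm{Princ}\,\xi)$) is valid but unnecessary, because that uniqueness clause already allows different smooth parts $B^1 \neq B^2$; its proof carries out exactly this matching.
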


\subsection{Existence Proofs}
In this section we prove the existence portions of Theorems \ref{th:gendef} and \ref{exactdef}. This relies on the short exact sequence of Theorem \ref{decomp1proof} and the splitting $\mathcal{S}$ \eqref{altsplitting} produced in its proof. The main idea is to use this splitting to write an HS symplectic structure in a `prepared form', consisting of a smooth part which stays constant, and a singular part that gets deformed. 

Recall that $\mathcal{S}$ varies smoothly as the Lie algebroid $A$ gets deformed, it sends closed forms to closed forms, and the forms in its image are supported in an arbitrary tubular neighbourhood $U$, which we take to be precompact. The main result we will need is the following lemma. 

\begin{lemma} \label{deformationone}
Let $U$ be a tubular neighbourhood of $W$. Suppose a closed algebroid form $\omega \in \Omega^2_{A}$ is cohomologous to a smooth form on $M$. Then there exists $B \in \Omega^{2,cl}_{M}$ and $\alpha \in \Omega^1_{A}$ supported in $U$ such that 
\[
\omega = B + d\alpha.
\] 
Furthermore, given a deformation $A(t)$ of $A$, there exists a family $\alpha(t) \in \Omega^1_{A(t)}$ supported in $U$ with $\alpha(0) = \alpha$. Consequently, the family 
\[
\omega(t) = B + d \alpha(t) \in \Omega^2_{A(t)}
\]
is a deformation of $\omega$ by closed algebroid forms. 
\end{lemma}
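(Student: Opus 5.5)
We will use the short exact sequence of Theorem \ref{decomp1proof} together with the closed-form-preserving splitting $\mathcal{S}$ of Equation \eqref{altsplitting}, built from the data $(\phi, \psi_U, h, \sigma)$ with $\psi_U$ supported in $U$.

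First, the decomposition at $t=0$. Since $\omega$ is cohomologous to a smooth form, the exact sequence \eqref{exactglobalcoh} gives $\mathrm{Princ}[\omega] = 0$ in $H^1(W, P_k(L))$. So the principal part $a := \mathrm{Princ}(\omega) \in \Omega^1_W(P_k(L))$ is $d^\sigma$-exact: $a = d^{\sigma} c$ with $c \in \Omega^0_W(P_k(L))$. Since $\mathrm{Princ}$ is a cochain map, $\mathrm{Princ}(d\mathcal{S}'(c)) = d^\sigma c = a$ for any splitting $\mathcal{S}'$ of $\mathrm{Princ}$. We take the naive splitting $s(c) = \psi_U\tau(c)$ from \eqref{ogsplitting}, which is supported in $U$. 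Then $\omega - d s(c)$ is closed, has vanishing principal part, and hence lies in $\Omega^2_M$. Call it $B$; it is closed because it is the difference of two closed forms. Thus $\omega = B + d\alpha$ with $\alpha = s(c)$ supported in $U$. (The modified splitting $\mathcal{S}$ could be used instead; here only the support property matters.)

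Second, the deformation. Choose the tubular embedding, bump function and metric once and for all. A family $A(t)$ of HS algebroids deforming $A$ corresponds, after fixing the tubular neighbourhood, to a smooth family of splittings $\sigma(t)$ with $\sigma(0)=\sigma$. If $A(t)$ is only given abstractly, I would first use isotopy invariance of the normal form, or simply the assumption that the family is smooth in the sense of Theorem \ref{MCequation}, to realize $A(t) = A(\sigma(t))$ on $U$ with $A(t)=A$ outside $U$; this identification is the main technical point to check. The forms $\tau_r(t) = \epsilon_r - \sum \eta_{i,r}(t)$ then vary smoothly. Define $\alpha(t) = \psi_U \tau^{(t)}(c) = \psi_U\sum_r c_r \tau_r(t) \in \Omega^1_{A(t)}$, keeping $c$ fixed. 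This is supported in $U$, smooth in $t$, and satisfies $\alpha(0)=\alpha$. The form $B$ is smooth, so it is an $A(t)$-form for every $t$ via the anchor. Hence $\omega(t) = B + d_{A(t)}\alpha(t)$ is a closed $A(t)$-form with $\omega(0)=\omega$.

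The main obstacle is ensuring that the deformation $A(t)$ can be put in the fixed model $A(\sigma(t))$ over one tubular neighbourhood while agreeing with $A$ away from $W$. This is needed so that $\alpha(t)$, initially defined on $L$, extends by zero to $M$ as an $A(t)$-form. The fix is to note that $\psi_U$ kills everything outside $U$, where $A(t)$ and $TM$ coincide. Inside $U$ we need only the smooth dependence of $\tau_r$ on $\sigma$, which was already remarked after \eqref{altsplitting}.
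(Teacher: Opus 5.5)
Your proof is correct and is essentially the paper's argument. The paper writes $\omega = C + d\gamma$, sets $\alpha = \mathcal{S}(\mathrm{Princ}(\gamma))$ and $B = C + d(\gamma-\alpha)$, and deforms via $\alpha(t) = \mathcal{S}_t(\mathrm{Princ}(\gamma))$ with the principal data held fixed, exactly as you do. Your choice of an arbitrary $d^{\sigma}$-primitive $c$ of $\mathrm{Princ}(\omega)$ in place of $\mathrm{Princ}(\gamma)$, and of the naive splitting $s$ in place of $\mathcal{S}$, are harmless variations, since only the support and splitting properties are used here.
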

\begin{proof}
By assumption there exists $C \in \Omega^2_{M}$ and $\gamma \in \Omega^1_{A}$ such that $\omega = C + d \gamma$. Let $\alpha = \mathcal{S}(\mathrm{Princ}(\gamma)) \in \Omega^1_{A}$, which is an algebroid $1$-form supported in $U$ such that $\mathrm{Princ}(\gamma) = \mathrm{Princ}(\alpha)$. Let $B = C + d(\gamma- \alpha) \in \Omega^2_M$, which is smooth. Then $\omega = B + d\alpha$. Moreover, given a deformation $A(t)$ of $A$, we define $\alpha(t) = \mathcal{S}_t(\mathrm{Princ}(\gamma)) \in \Omega^1_{A(t)}. $
\end{proof}

 \begin{proof}[Proof of Theorem \ref{th:gendef}: Existence part] Given the closed form $r(0) \in \Omega_{W}^1(P_{k}(L))$, let $\kappa = \omega - \mathcal{S}(r(0)) \in \Omega^{2,cl}_{A}$. This form is closed and 
\[
\mathrm{Princ}([\kappa]) = \mathrm{Princ}([\omega]) - [r(0)] = 0 \in H^1(P_{k}(L)). 
\]
Hence, by the exactness of the cohomology sequence from Theorem \ref{decomp1proof}, $[\kappa] \in H^2(M)$, meaning that $\kappa$ is cohomologous to a smooth form. Hence, by Lemma \ref{deformationone}, there is a smooth closed form $B \in \Omega^{2,cl}_{M}$ and an algebroid form $\alpha \in \Omega_{A}^1$ supported in $U$ such that $\kappa = B + d\alpha$. In addition, given the deformation $A(t)$, Lemma \ref{deformationone} also provides a family of algebroid forms $\alpha(t) \in \Omega^1_{A(t)}$ supported in $U$ with $\alpha(0) = \alpha$, so that 
\[
\kappa(t) = B + d\alpha(t) \in \Omega^{2,cl}_{A(t)}
\]
is a family of closed forms deforming $\kappa$. Now define 
\[
\omega(t) = \kappa(t) + \mathcal{S}_t(r(t)) = B + \mathcal{S}_t(r(t)) + d\alpha(t) \in  \Omega^{2,cl}_{A(t)}.
\]
Then $\omega(0) = \kappa + \mathcal{S}(r(0)) = \omega$ and $\mathrm{Princ}([\omega(t)]) = [r(t)]$ as desired. Finally, since both $\mathcal{S}_t(r(t))$ and $\alpha(t)$ are supported in $U$, $\omega(t)$ agrees with $\omega$ outside of $U$. Since $\omega$ is non-degenerate and $M$ is compact, $\omega(t)$ will also be non-degenerate, and hence symplectic, for small $t$. 
\end{proof}

\begin{proof}[Proof of Theorem \ref{exactdef}: Existence part] We apply Lemma \ref{deformationone} to obtain the deformation $\omega(t) = B + d\alpha(t) \in \Omega^2_{A(t)}$. Because $\alpha(t)$ has compact support, the form $\omega(t)$ will remain symplectic for small $t$.
\end{proof}

\subsection{Uniqueness Proofs} In this section we prove the uniqueness portions of Theorems \ref{th:gendef} and \ref{exactdef}, as well as Corollary \ref{isotopyinv}. The idea is again to use the splitting $\mathcal{S}$, this time in order to bring the HS symplectic structures into a form suitable for an application of the Moser argument. The condition that the HS algebroid $A$ be `nice' is useful because it implies the vanishing of cohomology groups, which helps us to choose primitives. 

We start by stating the cohomological consequences of niceness. Observe that there is a vector bundle decomposition $P_{k}(L) = L^0 \oplus P_{k}^{+}(L)$ which is compatible with the principal representation. As a result, $\Omega_W^{\bullet}(P_{k}(L)) = \Omega_{W}^{\bullet} \oplus \Omega_{W}^{\bullet}(P_{k}^+(L))$ is a decomposition of cochain complexes. The following result follows from the fact that when $A$ is nice, the connections on $L^{i}$, for $i > 0$, and hence on $P_{k}^{+}(L)$, admit no flat sections. 
\begin{lemma} \label{myobstruction}
Let $A$ be a nice order $k+1$ HS algebroid. Then 
\[
H^{0}(W, P_k(L)) = H^{0}(W) = \mathbb{R}.
\]
\end{lemma}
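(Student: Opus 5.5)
We aim to show that every $d^{\sigma}$-closed section of $P_k(L)=\bigoplus_{r=0}^k L^r$ over $W$ is a constant function in the $L^0$ summand. Flat sections here means sections $u=\sum_r u_r$ with $d^{\sigma}u=0$. The plan is a downward induction on the degree $r$, using the triangular form of the principal differential \eqref{eq:principal_diff}. We take $W$ to be connected; otherwise $\mathbb{R}$ is replaced by $\mathbb{R}^{\pi_0(W)}$.

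By \eqref{eq:principal_diff}, the component of $d^{\sigma}u$ in $\Omega^1_W(L^p)$ is
\[
d^{\nabla}u_p + \sum_{r>p}(p-r)\,\eta_{r-p}\otimes u_r .
\]
The top equation, $p=k$, has no correction terms and reads $d^{\nabla}u_k=0$. So $u_k$ is a flat section of $(L^k,\nabla^k)$.

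The key input is niceness. The monodromy of $L^i$ is the $i$-th power of the monodromy of $L$. Since the latter is not contained in $\{\pm1\}$, some loop acts on $L$ by a scalar $c$ with $|c|\neq 1$. That loop then acts on $L^i$ by $c^i\neq 1$ for every $i\geq 1$. On a connected $W$, a flat section is determined by its value at the basepoint, and that value must be fixed by the monodromy. Hence a flat section of $L^i$ with $i\geq1$ vanishes, and in particular $u_k=0$.

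Now induct downward. Suppose $u_r=0$ for all $r>p$ with $p\geq 1$. The $L^p$-equation reduces to $d^{\nabla}u_p=0$, so the same argument gives $u_p=0$. At $p=0$ all higher components vanish and the equation reads $du_0=0$, so $u_0$ is locally constant. This gives $H^0(W,P_k(L))=H^0(W)=\mathbb{R}$.

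Equivalently, one can argue with the filtration by degree $\geq p$: the subbundles $\bigoplus_{r\geq p}L^r$ are subrepresentations, their successive quotients are the flat bundles $L^p$, and one applies left exactness of $H^0$. The direct computation above avoids this. No step is a real obstacle; the only point needing care is the monodromy argument, namely that "not contained in $\{\pm1\}$" rules out flat sections of every positive power $L^i$, not just of $L$ itself.
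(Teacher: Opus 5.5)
Your proof is correct and follows the same route as the paper, which simply observes that niceness rules out flat sections of $L^i$ for $i>0$, hence of $P_k^+(L)$, leaving only the locally constant functions in the $L^0$ summand; your downward induction and the observation that $|c|\neq 1$ forces $c^i\neq 1$ for all $i\geq 1$ make this explicit. One harmless slip: by \eqref{eq:principal_diff} the coefficient of $\eta_{r-p}\otimes u_r$ in the $L^p$-component is $-p$ rather than $p-r$, so the $L^0$ summand receives no correction terms at all; this does not affect your argument, which only uses triangularity.
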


The following lemma generalizes the well-known fact that smooth families of exact $1$-forms have smooth families of primitives. It relies crucially on the fact that this result also holds in the setting of a family of flat connections $\nabla_t$ on a line bundle $\Lambda$, such that the monodromy is non-trivial for all $t$. 
\begin{lemma} \label{smoothprim} 
Let $A$ be a nice order $k+1$ HS algebroid with deformation $A(t)$, and let $\xi(t) \in \Omega_{W}^1(P_{k}(L))$ be a smooth family of $1$-forms which are exact with respect to the family of differentials $d^{\sigma(t)}$. Then there exists a smooth family of primitives $u(t) \in \Omega_{W}^0(P_{k}(L))$ for small values of $t$, in the sense that 
\[
d^{\sigma(t)} u(t) = \xi(t).
\] 
\end{lemma}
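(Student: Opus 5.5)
The idea is to solve $d^{\sigma(t)}u(t) = \xi(t)$ one graded piece at a time, exploiting the triangular structure of the principal differential. For $u = \sum_{r=0}^k u_r$ with $u_r \in \Omega^0_W(L^r)$, Equation \eqref{eq:principal_diff} gives
\[
(d^{\sigma(t)}u)_p = d^{\nabla_t} u_p + \sum_{r=p+1}^{k} (-p)\,\eta_{r-p}(t)\, u_r ,
\]
so the $L^p$-component involves only $u_p$ and the higher components $u_r$, $r>p$. We therefore solve from $p=k$ downward:
\[
d^{\nabla_t} u_p(t) = \xi_p(t) + p\sum_{r>p}\eta_{r-p}(t)\,u_r(t),
\]
with the right-hand side known and smooth in $t$ once $u_{p+1}(t),\dots,u_k(t)$ are constructed.

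The first step is to check that each of these equations is solvable. The right-hand side is $d^{\nabla_t}$-closed because $d^{\sigma(t)}\xi(t)=0$ and the already-chosen $u_r$ are genuine primitives. Exactness is subtler: we know only that $\xi(t)$ is $d^{\sigma(t)}$-exact as a whole, say $\xi(t) = d^{\sigma(t)}v(t)$ with $v(t)$ non-smooth in $t$. For $p>0$, niceness implies that the flat connection $\nabla_t^{p}$ on $L^p$ has no nonzero flat sections for small $t$. Indeed, the monodromy of $L$ at $t=0$ contains some $\lambda\neq\pm1$, so $\lambda(t)^p \ne 1$ for small $t$. Consequently a primitive of a $d^{\nabla^p_t}$-exact form is unique. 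Comparing with $v(t)$ inductively from the top then shows that uniqueness forces $u_r(t)=v_r(t)$ for all $r\ge 1$. This gives exactness for every $p \geq 1$ and, for $p=0$, reduces the problem to an ordinary exact $1$-form, which is $\xi_0(t)$ since the $p=0$ coefficient vanishes.

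Smoothness in $t$ is the main obstacle, namely producing primitives of $d^{\nabla_t}$-exact twisted forms on a fixed line bundle $\Lambda=L^p$ that depend smoothly on $t$. For $p=0$ we integrate along paths from a basepoint: $u_0(t)(x) = \int_{\gamma_x}\xi_0(t)$ is smooth in $t$ and well defined by exactness. For $p\ge1$ we pass to the universal cover $\tilde W$ (or work along paths). On $\tilde W$ we trivialize $\Lambda$ by a $\nabla_t$-flat frame $e_t$, chosen to depend smoothly on $t$ since $\nabla_t$ does. We write the closed right-hand side as $\beta(t)\otimes e_t$ with $\beta(t)$ a closed scalar $1$-form on $\tilde W$, and integrate to obtain $f_t$ with $df_t=\beta(t)$, normalized by $f_t(\tilde x_0)=c(t)$. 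The constant $c(t)$ is pinned down by equivariance: the deck transformation by a loop $g$ with monodromy $\mu_t(g)$ requires
\[
c(t) + \int_{\tilde x_0}^{g\tilde x_0}\beta(t) = \mu_t(g)^{-1}c(t),
\]
up to convention. Choosing one loop $g$ with $\mu_0(g)\ne 1$, we solve $c(t) = \bigl(\int\beta(t)\bigr)/(\mu_t(g)^{-1}-1)$. This is smooth in $t$ for small $t$ because the denominator is nonzero near $t=0$. The resulting $f_t e_t$ is equivariant for that $g$, and equivariance for all other loops follows from exactness together with uniqueness of primitives. Hence $f_te_t$ descends to a primitive $u_p(t)$ on $W$ that is smooth in $t$. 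Iterating down to $p=0$ and summing gives the smooth family $u(t)$ with $d^{\sigma(t)}u(t)=\xi(t)$.
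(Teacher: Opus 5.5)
Your proposal is correct and follows the same route as the paper: you separate off the $L^0$ component, solve recursively from $L^k$ downward, and use niceness (non-trivial monodromy of $\nabla_t^p$ for all $p\geq 1$ and small $t$) to obtain primitives that depend smoothly on $t$. You also supply two details the paper leaves implicit: the universal-cover construction of smooth-in-$t$ primitives for a family of flat connections with non-trivial monodromy, and the uniqueness-of-primitives argument showing that each inductive right-hand side remains exact.
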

\begin{proof}
We can write $\xi(t) = \xi_0(t) + \xi^{+}(t)$ using the above decomposition of $P_{k}(L)$. The smooth family of primitives for $\xi_0(t) \in \Omega_{W}^1$ is produced in the standard way. To produce the primitives of $\xi^+(t) = (\xi_1(t), ..., \xi_k(t))$ we work recursively, starting with $\xi_k(t)$. Since $A$ is nice, the monodromy of $\nabla_t$ and its positive powers are non-trivial for small $t$. Since $\xi^+(t)$ is exact, it has a primitive $s(t) = (s_{1}(t), ..., s_{k}(t))$. Looking at the formula for $d^{\sigma(t)}$ shows that $\xi_k(t) = d^{\nabla^{k}_{t}} s_{k}(t)$ is exact for all $t$, and hence we can assume that $s_k(t)$ is smooth in $t$. Working inductively then completes the proof. 
\end{proof}

 \begin{proof}[Proof of Theorem \ref{th:gendef}: Uniqueness part]
 Using Lemma \ref{smoothprim} and the family of splittings $\mathcal{S}_{t}$, we can find a smooth family of algebroid forms $\gamma(t) \in \Omega^1_{A(t)}$ such that $\omega^2(t) - \omega^1(t) = d\gamma(t)$. We now apply the Moser argument. 
%
%
%
 \end{proof}
 
 \begin{proof}[Proof of Theorem \ref{exactdef}: Uniqueness part]
 Consider two deformations $\omega^i(t) = B^{i} + d \alpha^{i}(t)$, for $i = 1,2$, as in the Theorem statement. Note here that in general $B^1 \neq B^2$. The goal is to re-express these deformations in a uniform way as 
\[
\omega^1(t) = B + d \eta^1(t), \qquad \omega^2(t) = B + d\eta^2(t),
\]
where $\eta^1(t), \eta^2(t) \in \Omega^1_{A(t)}$ are families of algebroid forms supported in a \emph{precompact} tubular neighbourhood of $W$, and such that $\eta^1(0) = \eta^2(0)$. Once this is done, we can then apply the standard Moser argument because the vector field we produce will have compact support and is therefore complete. 

At $t = 0$, $\mathrm{Princ}(\omega) = d^\sigma \mathrm{Princ}(\alpha^i(0))$ for $i = 1,2$. Let $\beta = \mathrm{Princ}(\alpha^2(0) - \alpha^1(0)) \in P_{k}(L)$, which is $d^\sigma$-closed. By Lemma \ref{myobstruction}, $\beta \in L^0$ is a constant. Therefore, $d^{\sigma(t)} \beta = 0$ for all $t$, and hence $\mathcal{S}_{t}(\beta) \in \Omega^{1,cl}_{A(t)}$ is a family of closed $1$-forms supported in a precompact tubular neighbourhood $U$. Replacing $\alpha^1(t)$ with $\alpha^1(t) + \mathcal{S}_{t}(\beta) \in \Omega^1_{A(t)}$, we can still write $\omega^1(t) = B^1 + d \alpha^1(t)$. But now $\mathrm{Princ}(\alpha^2(0) - \alpha^1(0)) = 0$, so that $\xi = \alpha^2(0) - \alpha^1(0) \in \Omega^1_M$ is a smooth $1$-form supported in $U$. Furthermore, from the equation $\omega^1(0) = \omega = \omega^2(0)$, we conclude that $B^1 = B^2 + d \xi$. Now define 
\[
B = B^2 \in \Omega^{2,cl}_{M}, \qquad \eta^1(t) = \alpha^1(t) + \xi \in \Omega^1_{A(t)}, \qquad \eta^2(t) = \alpha^2(t) \in \Omega^1_{A(t)}. 
\]
With these forms, the deformations are expressed in a uniform way as desired. 
\end{proof}

\begin{proof}[Proof of Corollary \ref{isotopyinv}]
 First, note that we can assume that $\phi_{t}$ is supported in a compact tubular neighbourhood $U$ of $W$. By pulling back by $\phi_{t}$, the family $(A^2(t), \omega^2(t))$ is isomorphic to 
 \[
 (A^1(t), \phi_{t}^*(\omega^2(t)) = \phi_{t}^*(B^2) + d \phi_{t}^*(\alpha^2_{t})).
 \]
 Note that $B^2$ is a smooth closed $2$-form. Let $V_{t} \in \mathfrak{X}(M)$ be the time-dependent vector field on $M$ obtained by differentiating the family $\phi_{t}$. Then $\phi_{t}^*(B^2) = B^2 + d \gamma(t)$, where $ \gamma(t) = \int_{0}^{t} \phi_{s}^*(\iota_{V_s}B^2) ds \in \Omega^1(M). $
 Note that since $\phi_{s}$ and $V_{s}$ are both supported in the compact neighbourhood $U$, so is $\gamma(t)$. Therefore, 
 \[
 \phi_{t}^*(\omega^2(t)) = B^2 + d \beta(t), 
 \]
 where $\beta(t) = \phi_{t}^*(\alpha_{t}^2) + \gamma \in \Omega^1_{A^1(t)}$ is an algebroid $1$-form supported in a compact tubular neighbourhood. Furthermore, $\phi_{0}^*(\omega^2(0)) = \omega$. We are now in the setting of Theorem \ref{exactdef}. Hence, there is a family of automorphisms $\psi(t) \in \mathrm{Aut}(A^1(t))$ such that $\psi(t)^* \phi_{t}^*(\omega^2(t)) = \omega^1(t),$ for small $t$. 
 \end{proof}

\section{Deforming along the character variety} \label{sec:deformalongcharvar}
Taken together, Theorem \ref{exactdef} and Corollary \ref{isotopyinv} tell us that, given a nice symplectic HS algebroid $(A, \omega)$ with the property that $\omega$ is cohomologous to a smooth form, deformations of $A$ can be lifted to deformations of $\omega$, and that, up to isomorphism, such deformations depend only on the isotopy class of the algebroid deformation. Recall from Section \ref{classificationreminder} that the space of HS algebroids up to isotopy was classified in \cite{foliationpaper} and shown to be homeomorphic to a character variety. Therefore, this suggests that the pair $(A, \omega)$ should determine a map from a character variety to the space of Poisson structures. 

Furthermore, recall from Section \ref{extsection} that an HS algebroid has an extension class, which determines the symplectic variation of any algebroid symplectic form by Theorem \ref{prop:structureonD}. We therefore have the following schematic commutative diagram. 

\begin{center}
\begin{tikzcd}
\text{Character Variety} \ar[r] \ar[d] &  \text{Poisson Structures}  \ar[d] \\
\text{Extension Class} \ar[r] & \text{Symplectic Variation}
\end{tikzcd}
\end{center}
The fact that the variation of the symplectic foliation can be deduced from a characteristic class of the HS algebroid gives us a straightforward method of showing that the families of Poisson structures we produce are non-constant.  

In this section, we implement this construction for a certain class of symplectic HS algebroids $(A, \omega)$ constructed from the data of cosymplectic structures on the hypersurface $W$. 

\subsection{The construction}
\subsubsection{Cosymplectic geometry}  \label{sec:cosymplectic}
A cosymplectic structure on a compact manifold $W$ of dimension $2n + 1$ consists of a pair of forms $(a, B)$, where $a \in \Omega^{1,cl}_{W}$ is a closed nowhere vanishing $1$-form and $B \in \Omega^{2,cl}_{W}$ is a closed $2$-form, such that 
\[
B^{n} \wedge a \in \Omega^{2n+1}_{W}
\]
is a volume form. Given this data, consider the trivial bundle $L = W \times \mathbb{R}$ along with the singular symplectic form 
\[
\omega = B - ka \wedge \frac{dt}{t^{k+1}},
\]
where $t$ denotes the linear coordinate along $\mathbb{R}$. At first glance, it may appear most natural to view this as an algebroid differential form for the HS algebroid associated to the trivial splitting $\sigma = d$ (i.e., a Scott $b^{k+1}$-algebroid). However, this algebroid is not \emph{nice} because the monodromy of the induced local system on $L$ is trivial, and the form $\omega$ is not cohomologous to a smooth form since $a$ is not exact. 

Instead, consider the HS algebroid $A(a)$ associated to the splitting $\sigma = \nabla$, where $\nabla = d - a$ is a flat connection on $L$. Because the cohomology class of $a$ is non-trivial, the algebroid $A(a)$ is nice. This algebroid is generated by the following vector fields 
\[
X + a(X) t \partial_{t}, \qquad t^{k+1} \partial_{t},
\]
where $X \in \mathfrak{X}(W)$. By Corollary \ref{localbasiscor}, the dual space $A(a)^*$ is generated by the $1$-forms 
\[
\alpha, \qquad \tau_{k}(t_{k}) = \frac{dt}{t^{k+1}} - \frac{a}{t^{k}},
\]
where $\alpha \in \Omega^1(W)$, and where $t_{k} \in L^{k}$ is the unit section. The singular symplectic form is then given by 
\[
\omega = B + d \tau_{k}(t_{k}) \in \Omega^2_{A(a)},
\]
which is cohomologous to a smooth form. The data $(A(a), \omega)$ is thus ready to be deformed. 

\subsubsection{Deformation} Let $\sigma = \nabla + \sum_{i= 1}^{k-1} \eta_{i}$ be a splitting as in Theorem \ref{MCequation}. For any real number $s \in \mathbb{R}$, the following also defines a splitting 
\[
\sigma(s) = \nabla + \sum_{i= 1}^{k-1} s^{i} \eta_{i}.
\]
Hence, we obtain a family of order $k+1$ HS Lie algebroids $A(\sigma(s))$ deforming $A(\sigma(0)) = A(a)$. Applying Theorem \ref{exactdef}, for small $s$ we get a family of algebroid symplectic forms $\omega(s) \in \Omega^2_{A(\sigma(s))}$ such that $\omega(0) = \omega$. In a neighbourhood of $W$, this form has the following expression 
\[
\omega(s) = B + \tau_{s}(d^{\nabla} t_{k} + \sum_{i = 1}^{k-1} (i-k)s^{i} \eta_{i} t_{k - i}), 
\]
where $\tau_{s}$ is the map of Proposition \ref{tauchainmap}, which is $s$-dependent, and where $t_{i} \in L^{i}$ is the unit section. By Lemma \ref{nondegeneracy}, the foliation of $W$ induced by $\omega(s)$ is given by $\mathcal{F} = \mathrm{ker}(a)$, and $\alpha_{k} = -k \ a \ t_{k} : (\nu_{\mathcal{F}}, \nabla^{B}) \to (L^k, \nabla^k)|_{\mathcal{F}}$ is the isomorphism between the Bott connection and the $k^{th}$-power of $\nabla$. Furthermore, the foliated symplectic form induced by $\omega(s)$ is given by 
\[
\omega_{\mathcal{F}}(s) = B - s^k \sum_{r = 1}^{k-1} r \eta_{k-r} \wedge \eta_{r}, 
\]
which has symplectic variation 
\[
\mathrm{var}(\omega_{\mathcal{F}}(s)) = - e(\sigma(s)) = - s^{k} e(\sigma) = - s^k [ \sum_{r = 1}^{k-1} r \eta_{k-r} \wedge \eta_{r}] \in H^{2}_{\mathcal{F}}(\nu^{-1}_{\mathcal{F}}). 
\]

\subsubsection{Character Variety} We briefly recall the classification of HS algebroids from Section \ref{classificationreminder}. There is a natural map 
\[
\mathrm{Hom}(\pi_{1}(W), G_{k}) \to H^{1}(W, \mathbb{Z}/2) \times H^{1}(W, \mathbb{R}),
\]
where the two terms on the right respectively classify line bundles and flat connections. Denote the pre-image of $(L, [a])$ by $\mathrm{Hom}_{(L, [a])}(\pi_{1}(W), G_{k})$. This is the fibre corresponding to the flat line bundle $(L, \nabla)$. Then we define the following $G_{k}$-character variety of $(W, L, a)$
\[
M_{k}(L, W, a) = \mathrm{Hom}_{(L, [a])}(\pi_{1}(W), G_{k})/G_{k}^{0}.
\]
By \cite[Corollary 8.42]{foliationpaper}, this space is homeomorphic to the space of order $(k+1)$ hypersurface algebroids for $(W,L)$ which induce the flat connection $\nabla$ on $L$, modulo isotopies which restrict to the identity along $W$. 

Our goal in this section is to show that the symplectic algebroids $(A(\sigma(s)), \omega(s))$ constructed above depend only on the image of $\sigma$ in the character variety $M_{k}(L, W, a)$. 

\begin{lemma}[Scaling invariance] \label{lem:ScalingInv}
Let $\sigma = \nabla + \sum_{i = 1}^{k-1} \eta_{i}$ be a splitting. There exists a positive $\epsilon(\sigma)$ such that the symplectic algebroids $(A(\sigma(s)), \omega(s))$ are isotopic for all $0 < s \leq \epsilon(\sigma)$. 
\end{lemma}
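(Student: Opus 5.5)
The plan is to realize the rescaling $s \mapsto \lambda s$ geometrically by the fibrewise dilation of $L$, and then to use Corollary \ref{isotopyinv} to compare the deformed forms. For $\lambda > 0$, let $m_\lambda : L \to L$ be fibrewise multiplication by $\lambda$; it restricts to the identity on $W$ and is isotopic to the identity through the maps $m_{\lambda'}$ with $\lambda'$ between $1$ and $\lambda$. The first step is to check how $m_\lambda$ acts on splittings. A section $u \in L^{-i}$, viewed as a function on $L$ of fibrewise degree $i$, satisfies $m_\lambda^* u = \lambda^i u$, while $\mathcal{E}$ and $\nabla_X$ are preserved. By \eqref{algebroidsections}, $(m_\lambda)_*$ sends $\sigma(X) = \nabla_X + \sum_i \eta_i(X)\mathcal{E}$ to $\nabla_X + \sum_i \lambda^{-i}\eta_i(X)\mathcal{E}$, and it preserves the span of $L^{-k}\mathcal{E}$. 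Hence $m_\lambda$ is an isomorphism $A(\sigma(s)) \cong A(\sigma(s/\lambda))$ (or the same with $s\lambda$, depending on the direction of pushforward). Because $m_\lambda$ is isotopic to the identity relative to $W$, this is an isotopy of algebroids. It also covers the undeformed algebroid $A(a) = A(\sigma(0))$, since the $\eta$-terms vanish there.

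Next I fix $s_0 \in (0,\epsilon]$, where $\epsilon$ is small enough for Theorem \ref{exactdef} to apply along the whole family, and compare two one-parameter deformations of $A(a)$. The first is the original family $A(\sigma(u))$ with $u \in [0, s_0]$. The second is $u \mapsto m_{\lambda}^{!}A(\sigma(\lambda u))$ for a fixed $\lambda = s_1/s_0$, which by the first step equals $A(\sigma(u))$ as an algebroid. Pulling back the deformed forms $\omega(\lambda u)$ along $m_\lambda$ gives a second symplectic deformation $m_\lambda^*\omega(\lambda u)$ of $m_\lambda^*\omega$ on the same algebroid family.

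The key point is that $\omega = B + d\tau_k(t_k)$ satisfies $m_\lambda^*\omega = B + \lambda^{-k}\, d\tau_k(t_k)$, which is not equal to $\omega$. I would repair this in one of two ways. The first option is to also rescale $t_k$, that is, compose with the symplectic rescaling coming from the choice of unit section. The more robust option is to note that $\lambda \mapsto m_\lambda^*\omega$ is a path of symplectic forms on $A(a)$ in the fixed class $[B] + \lambda^{-k}[d\tau_k]$. Since this class is exact modulo smooth forms, a Moser argument on the nice algebroid $A(a)$ (as in the uniqueness part of Theorem \ref{exactdef}, using Lemma \ref{myobstruction}) produces an isotopy relating them, possibly moving points of $W$, which the footnote to Corollary \ref{isotopyinv} permits. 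After this adjustment, both families have the prepared form $B + d\alpha(t)$ with compactly supported $\alpha$ and agree at $t = 0$. Corollary \ref{isotopyinv} then says the two symplectic algebroids are isotopic for small parameter, which identifies $(A(\sigma(s_0)),\omega(s_0))$ with $(A(\sigma(s_1)),\omega(s_1))$.

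The main obstacle is uniformity. Corollary \ref{isotopyinv} only gives isotopy for $t$ small, with a threshold that depends on the families. I would handle this by working in the variable $s$ itself: show that for each $s_0 \in (0,\epsilon]$ the family is locally constant up to isotopy, by applying the argument above with $\lambda$ near $1$ and deforming around the basepoint $(A(\sigma(s_0)), \omega(s_0))$, which is still nice and exact. Connectedness of $(0,\epsilon]$ then gives the result. The delicate point is that Theorem \ref{exactdef} must be applicable at every basepoint on a single interval, which is why $\epsilon(\sigma)$ has to be chosen so that $\omega(s)$ stays nondegenerate on a fixed compact tube for all $s \le \epsilon$.
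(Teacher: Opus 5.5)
The step that fails is the comparison in your second and third paragraphs. Your computations are right: $m_\lambda$ identifies $A(\sigma(s))$ with $A(\sigma(s/\lambda))$, and $m_\lambda^*\omega=B+\lambda^{-k}d\tau_k(t_k)$. The problem is that with a \emph{fixed} $\lambda\neq 1$, the map relating your two families is not the identity at the basepoint $u=0$ and does not fix $\omega$ there. So Corollary~\ref{isotopyinv} does not apply.

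The Moser repair does not change this. Let $m$ be your scaling $A(\sigma(u))\to A(\sigma(\lambda u))$, and suppose $\psi\in\mathrm{Aut}(A(a))$ satisfies $\psi^*m^*\omega=\omega$. Then $\Phi=m\circ\psi$ preserves $\omega$, and applying $\psi$ to your second family turns it into $(\Phi^!A(\sigma(\lambda u)),\Phi^*\omega(\lambda u))$. You have only undone the scaling. You are back to needing a family of isotopies from $A(\sigma(u))$ to (a pullback of) its reparametrization $A(\sigma(\lambda u))$ that equals the identity at $u=0$. That is the original problem, and in general it has no solution.

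To see why, suppose $[\eta_1]\neq 0$ in $H^1(W,L^{-1})$. An isotopy $A(\sigma(u))\to A(\sigma(\lambda u))$ fixing $W$ has linear part along $W$ equal to a flat automorphism of $(L,\nabla)$, i.e.\ a constant $c_u$, and this constant rescales $[\eta_1]$. Matching $u[\eta_1]$ with $\lambda u[\eta_1]$ then forces $c_u=\lambda^{\pm 1}$ for all $u\neq 0$, which is incompatible with $\phi_0=\mathrm{id}$ by continuity. Rescaling the unit section $t_k$ only relabels the same form. In any case, Corollary~\ref{isotopyinv} would only give a conclusion for small $u$, not at $u=s_0$.

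The missing idea is that the rescaling must depend on the deformation parameter and be the identity at the basepoint. The paper does this at $s=0$:
\begin{itemize}
\item $\phi^u_s=m_{u^{-s}}\colon A(\sigma(s))\to A(\sigma(u^s s))$ satisfies $\phi^u_0=\mathrm{id}$, and both families start at $(A(a),\omega)$.
\item Corollary~\ref{isotopyinv} then gives isotopy for $s\le a$, uniformly in $u\in[b,1]$, so each interval $[b^s s,s]$ consists of isotopic structures.
\item The sequence $s_{j+1}=b^{s_j}s_j\to 0$ exhausts $(0,a]$.
\end{itemize}

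Your last paragraph can be turned into a correct, and somewhat simpler, alternative, but only if the scale itself is the deformation parameter. Fix $s_0>0$, and let $\chi_\mu$ be a compactly supported cut-off of $m_\mu$; it is still an isomorphism $A(\sigma(s))\to A(\sigma(s/\mu))$. On the \emph{fixed} algebroid $A(\sigma(s_0))$, compare the constant family $\omega(s_0)$ with $\chi_\mu^*\omega(s_0/\mu)$ as $\mu\to 1$. Three things hold:
\begin{itemize}
\item the two families agree at $\mu=1$;
\item both have the form $B+d\alpha$ with compactly supported $\alpha$, since $\chi_\mu^*B=B+d\gamma(\mu)$ as in the proof of Corollary~\ref{isotopyinv};
\item $A(\sigma(s_0))$ is nice, since it induces the same flat connection as $A(a)$.
\end{itemize}
So the uniqueness part of Theorem~\ref{exactdef} gives automorphisms $\psi(\mu)$, and $\chi_\mu\circ\psi(\mu)$ shows local constancy near $s_0$. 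Isotopy classes are therefore open in the connected interval $(0,\epsilon]$, so no uniform threshold is needed.

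If instead you keep $\lambda$ fixed and deform in $s$ around $s_0$, the mismatch at the basepoint, $\omega(s_0)$ versus $m^*\omega(\lambda s_0)$, is precisely the statement being proved, so that reading is circular.
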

\begin{proof}
For $u \in \mathbb{R}_{>0}$, consider the map $m_{u} : L \to L$ defined by $m_{u}(x,t) = (x,ut)$. This lifts to an algebroid map $m_{u} : A(\sigma(s)) \to A(\sigma(u^{-1}s))$. Hence, define the following family of isotopies 
\[
\phi_{s}^{u} = m_{u^{-s}} : A(\sigma(s)) \to A(\sigma(u^s s)),
\] 
and note that it satisfies $\phi_{0}^{u} = id$. Furthermore, by using a bump function, we can modify this family so that it is supported on a compact tubular neighbourhood of $W$ in $L$. Therefore, by Corollary \ref{isotopyinv}, there is some positive number $a$ such that $(A(\sigma(s)), \omega(s)) \cong (A(\sigma(u^s s)), \omega(u^s s))$ for $0 \leq s \leq a$. In fact, since the proof of Corollary \ref{isotopyinv} can be done in families, the above isomorphisms also exist for $b \leq u \leq 1$ for some $0 < b < 1$. Therefore, given $0 < s \leq a$, the symplectic algebroids are isotopic to each other in the closed interval $[b^s s, s]$. By induction, the algebroids are isotopic in the closed interval $[s_{k}, a]$, where $s_{k}$ is the sequence defined recursively by $s_{1} = a$ and $s_{k+1} = b^{s_{k}} s_{k}$. Therefore, since $\lim_{k \to \infty} s_{k} = 0$, the result follows with $\epsilon(\sigma) = a$. 
\end{proof}

\begin{proposition}[Gauge invariance] \label{prop:GaugeInv}
Let $\sigma^1$ and $\sigma^2$ be splittings with underlying flat connection given by $\nabla$, and let $(A(\sigma^{i}(s)), \omega^i(s))$, for $i = 1, 2$, be the corresponding deformations of $(A(a), \omega)$. If $\sigma^1$ and $\sigma^2$ determine the same element of $M_{k}(L, W, a)$, then $(A(\sigma^{1}(s)), \omega^1(s))$ and $(A(\sigma^{2}(s)), \omega^2(s))$ are isotopic for small values of $s$. 
\end{proposition}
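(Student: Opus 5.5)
The plan is to reduce to Corollary \ref{isotopyinv}. That corollary needs a $t$-family of isotopies between the two algebroid families that equals the identity at $t=0$, and both deformations must have the form $B^i + d\alpha^i(s)$ with $\alpha^i(s)$ compactly supported near $W$. The second requirement already holds, since the $\omega^i(s)$ come from Theorem \ref{exactdef}. So the real work is producing an isotopy $\phi_s : A(\sigma^1(s)) \to A(\sigma^2(s))$ with $\phi_0 = id$. At $s=0$ both algebroids equal $A(a)$, so this is consistent.

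First I translate the hypothesis. If $\sigma^1$ and $\sigma^2$ give the same point of $M_k(L,W,a)$, then by \cite[Corollary 8.42]{foliationpaper} there is an isotopy $\Phi: A(\sigma^1) \to A(\sigma^2)$ restricting to the identity on $W$. Equivalently, their holonomy representations are conjugate by some $g \in G_k^0$. Since both splittings share $\nabla$, the linear part of the conjugating element can be taken positive. Under the semi-direct decomposition $G_k \cong K_k \rtimes \mathbb{R}^*$, the positive scalar part only rescales, which the scaling of Lemma \ref{lem:ScalingInv} absorbs, so I may assume $g = z + \sum c_i z^{i+1} \in K_k$.

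The key step is to make this gauge equivalence compatible with the rescaling $\sigma \mapsto \sigma(s)$. Rescaling $\eta_i \mapsto s^i\eta_i$ is the action of the automorphism of $G_k$ given by conjugation with the dilation $z \mapsto sz$, whose effect on coefficients is $a_i \mapsto s^i a_i$. Applying it to the representations, $\sigma^1(s)$ and $\sigma^2(s)$ have representations conjugate by $g_s = z + \sum s^i c_i z^{i+1}$, and $g_s \to id$ as $s \to 0$. Geometrically, I will use the maps $m_u$ from the proof of Lemma \ref{lem:ScalingInv}: $m_u : A(\sigma) \to A(\sigma(u^{-1}))$, so for $s>0$
\[
\phi_s = m_{s^{-1}} \circ \Phi \circ m_{s} : A(\sigma^1(s)) \to A(\sigma^2(s)),
\]
which is an isomorphism restricting to the identity on $W$.

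The main obstacle is smoothness of $\phi_s$ at $s=0$ together with $\phi_0 = id$. The approach is to choose $\Phi$ so that its $k$-jet along $W$, in the tubular coordinate $t$, is given fibrewise by a polynomial $t \mapsto t + \sum c_i(x) t^{i+1}$. This is the form in which the Riemann–Hilbert correspondence produces isotopies from gauge transformations $\sigma \mapsto$ (gauge action of a section of the bundle of $K_k$'s). Conjugating by $m_s$ turns the coefficient $c_i$ into $s^i c_i$. For the higher-order remainder $t^{k+2}R(x,t)$ one gets $s^{k+1}t^{k+2}R(x,st)$, which is smooth in $s$ and vanishes at $s=0$. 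Hence $\phi_s$ extends smoothly to $s=0$ with $\phi_0 = id$.

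If $\Phi$ cannot be chosen so directly, I would instead build the isotopy by integrating the time-dependent vector field that generates the gauge path $g_{us}$, $u \in [0,1]$. Its components are sections of $L^{-i}\mathcal{E}$ scaled by $s^i$, so the same estimate applies. Finally, I cut $\phi_s$ off with a bump function so it is supported in a compact tubular neighbourhood, as in Lemma \ref{lem:ScalingInv}, and then invoke Corollary \ref{isotopyinv}. This shows the two symplectic algebroids are isotopic for small $s$.
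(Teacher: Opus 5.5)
Your proposal is correct and follows essentially the paper's proof. You conjugate the gauge transformation by the rescalings $m_s$ so that its unipotent part degenerates to the identity at $s=0$, cut it off with a bump function, apply Corollary~\ref{isotopyinv}, and use Lemma~\ref{lem:ScalingInv} to absorb the constant positive linear part. The only difference is bookkeeping. You strip off the scalar part at the level of representations before conjugating; this implicitly uses that, because both splittings share $\nabla$, the linear part of the gauge transformation is the constant linear part of $g$. The paper instead factors $g=\exp(W)\exp(V)$, deduces $dh=0$ so that $\exp(V)=m_{u^{-1}}$, and invokes Lemma~\ref{lem:ScalingInv} at the end.
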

\begin{proof}
Since the splittings $\sigma^1$ and $\sigma^2$ determine the same element of the character variety, they are gauge equivalent via a gauge transformation $g: W \to G_{k}^{0} \cong K_{k} \ltimes \mathbb{R}_{>0}$. Using the factorization, and the fact that $K_{k}$ is unipotent and thus has a global logarithm, we can write $g = \exp(W) \exp(V)$, where $V = h(x) t \partial_t$ and $W = \sum_{i = 1}^{k-1} f_{i}(x) t^{i+1}\partial_{t}$. Let $m_{s} : L \to L$ be the rescaling map $m_{s}(x,t) = (x,st)$. Let $g_{s} = m_{s^{-1}} g m_{s}$, for $s > 0$. Then 
\[
g_{s} : A(\sigma^1(s)) \to A(\sigma^2(s)).
\]
But $g_{s} = \exp(W_{s}) \exp(V)$, where $W_s = \sum_{i = 1}^{k-1} s^i f_{i}(x) t^{i+1}\partial_{t}$. Sending $s \to 0$, we get $g_{0} = \exp(V) \in \mathrm{Aut}(A(a))$. But $\exp(V)_{\ast} : A(a) \to A(a + dh)$. Hence $dh = 0$ so that $h = c$ is a constant and $\exp(V) = m_{u^{-1}}$ for $u = e^{-c}$. Let $\phi_{s} = \exp(W_{s})$. Then 
\[
 \phi_{s} : A(\sigma^1(us)) \to A(\sigma^2(s))
\]
is a family of isotopies with $\phi_{0} = id$. By using a bump function, we can modify this family so that it is supported on a compact tubular neighbourhood of $W$ in $L$. By Corollary \ref{isotopyinv}, the symplectic algebroids $(A(\sigma^{1}(us)), \omega^1(us))$ and $(A(\sigma^{2}(s)), \omega^2(s))$ are isotopic for small $s$. Furthermore, by Lemma \ref{lem:ScalingInv}, $(A(\sigma^{1}(s)), \omega^1(s))$ and $(A(\sigma^{1}(us)), \omega^1(us))$ are also isotopic for small $s$. 
\end{proof}

\subsubsection{Main result}
Summarizing the results of this section we obtain the following. 
\begin{theorem} \label{th:cosympdef}
Let $(W, B, a)$ be a compact cosymplectic manifold of dimension $2n + 1$. Then there is a linear order $k + 1$ hypersurface algebroid $A(a)$ for $(W, L = W \times \mathbb{R})$ which is equipped with the following algebroid symplectic form 
\begin{equation} \label{eq:normalform}
\omega = B - ka \wedge \frac{dt}{t^{k+1}} \in \Omega^2_{A(a)}. 
\end{equation}
Let $M$ be a manifold containing $W$ as a hypersurface and whose normal bundle $\nu_{W}$ is trivializable. Then $M$ inherits an order $k+1$ HS algebroid for $(W,M)$, also denoted $A(a)$. Assume that $\omega$ extends to an algebroid symplectic form on all of $M$. Then, there is a map 
\[
\Phi : M_{k}(M, W, a) \to \mathrm{Pois}(M, W, \mathcal{F}),
\]
where 
\begin{itemize}
\item $M_{k}(M, W, a) = \mathrm{Hom}_{(L, [a])}(\pi_{1}(W), G_{k})/G_{k}^{0}$ is the $G_{k}$-character variety of $(W, M, a)$,  
\item $\mathrm{Pois}(M, W, \mathcal{F})$ is the space of Poisson structures on $M$ whose symplectic leaves consist of the components of $M \setminus W$ and the leaves of the foliation $\mathcal{F} = \ker(a)$, modulo isotopies preserving $W$ and $\mathcal{F}$. 
\end{itemize}
Given an element $\sigma \in M_{k}(M, W, a)$, with extension class $e(\sigma) \in H^2(W, L^{-k})$, the symplectic variation of $\Phi(\sigma)$ along $\mathcal{F}$ is given by the restriction $-e(\sigma)|_{\mathcal{F}} \in H^{2}_{\mathcal{F}}(\nu_{\mathcal{F}}^*)$, and is well-defined up to multiplication by positive functions which are constant along the leaves of $\mathcal{F}$. 
\end{theorem}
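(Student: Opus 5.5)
The plan is to assemble $\Phi$ from the results of Sections \ref{sec:defconst} and \ref{sec:deformalongcharvar}, in four steps: construct the base pair, define $\Phi$ on representatives, check well-definedness, and read off the variation.

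\textbf{Step 1: the base pair $(A(a),\omega)$.} Fix a trivialization $\nu_W \cong W\times\mathbb{R}$ and a tubular neighbourhood embedding of $L = W\times \mathbb{R}$ into $M$. The splitting $\sigma = \nabla = d - a$ defines $A(a)$ on $L$ by Theorem \ref{MCequation}. Outside the tubular neighbourhood the anchor is an isomorphism, so we glue $A(a)$ with $TM$ to get an HS algebroid on $M$.

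On $L$ we have
\[
\omega = B + d\tau_k(t_k) = B - ka\wedge \frac{dt}{t^{k+1}}.
\]
This is closed, and it is non-degenerate near $W$ by Lemma \ref{nondegeneracy}: indeed $\alpha_k = -k\,a\,t_k$ and $\gamma = B$, and $B^n\wedge a$ is a volume form. The extension of $\omega$ to $M$ is assumed. Since $\tau_k(t_k)$ is an algebroid $1$-form, it can be cut off by a bump function; hence $\omega$ is cohomologous to a smooth form on $M$. Because $[a]\neq 0$, the monodromy of $\nabla$ is $\exp(\oint a)$, which is not contained in $\{\pm1\}$, so $A(a)$ is nice.

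\textbf{Step 2: defining $\Phi$ on representatives.} Given a class in $M_k(M,W,a)$, represent it by a splitting $\sigma = \nabla + \sum \eta_i$ with underlying connection $\nabla$. This uses Theorem \ref{RHthm}, restricted to the fibre over $(L,[a])$, after gauging the connection part to exactly $d-a$.

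Consider the scaled family $\sigma(s)$. Theorem \ref{exactdef} gives symplectic forms $\omega(s)$ on $A(\sigma(s))$ for small $s$, and we set $\Phi([\sigma]) = \rho(\omega(\epsilon)^{-1})$ for some small $\epsilon>0$. The resulting Poisson structure is symplectic on $M\setminus W$. On $W$ its leaves are those of $\ker(\alpha_k)=\ker(a)$, since $\alpha_k$ is unchanged by the scaling.

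\textbf{Step 3: well-definedness (the main obstacle).} There are three sources of choice, handled as follows.
\begin{itemize}
\item The choice of $\omega(s)$ is unique up to isotopy by the uniqueness part of Theorem \ref{exactdef}, which applies because $A$ is nice.
\item The choice of $\epsilon$ is harmless by Lemma \ref{lem:ScalingInv}.
\item The choice of representative $\sigma$ in its class is handled by Proposition \ref{prop:GaugeInv}.
\end{itemize}

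The delicate point is that ``small $s$'' depends on $\sigma$. The statement must therefore be read as: the isotopy class of $\rho(\omega(s)^{-1})$ is constant on $(0,\epsilon(\sigma)]$, and these constants agree across gauge-equivalent representatives for some common small $s$. This follows by combining Lemma \ref{lem:ScalingInv} and Proposition \ref{prop:GaugeInv}.

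A further point is that the algebroid isomorphisms produced (the rescalings $m_u$ and the automorphisms $\psi$ from Moser) induce diffeomorphisms of $M$ that preserve $W$. I must also check that they preserve $\mathcal{F}$. They do, because each induced diffeomorphism intertwines the two Poisson structures and hence maps symplectic leaves to symplectic leaves, and both structures have the leaves of $\ker(a)$ on $W$. This is exactly the equivalence relation defining $\mathrm{Pois}(M,W,\mathcal{F})$.

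\textbf{Step 4: the variation.} Apply Theorem \ref{prop:structureonD} to $\omega(s)$. This gives
\[
\mathrm{var} = -e(\sigma(s))|_{\mathcal F} = -s^k e(\sigma)|_{\mathcal F},
\]
using that each term $\eta_i\wedge\eta_{k-i}$ of the extension class carries the factor $s^i s^{k-i}=s^k$. Changing the representative, or moving by an isotopy, rescales this class by positive functions constant along the leaves. The sources of such factors are the positive constants $s^k$ and $u$, and the isomorphism $\nu_{\mathcal F}\cong L^k$, which can change by a positive leafwise-constant factor under isotopies preserving $\mathcal{F}$. Hence the variation of $\Phi(\sigma)$ is $-e(\sigma)|_{\mathcal{F}}$ up to that ambiguity, which is precisely the stated well-definedness.
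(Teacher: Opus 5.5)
Your proposal is correct and follows the paper's own argument. The paper obtains Theorem \ref{th:cosympdef} by summarizing the same chain: the nice pair $(A(a),\omega)$, the scaled splittings $\sigma(s)$ deformed via Theorem \ref{exactdef}, well-definedness via Lemma \ref{lem:ScalingInv} and Proposition \ref{prop:GaugeInv} (through Corollary \ref{isotopyinv}), and the variation $-s^k e(\sigma)|_{\mathcal{F}}$ from Theorem \ref{prop:structureonD}. Your extra remarks fill in details the paper leaves implicit: cutting off $\tau_k(t_k)$ shows the extended form is cohomologous to a smooth one, $\alpha_k$ is independent of $s$, and Poisson isomorphisms preserve $\mathcal{F}$.
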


In Theorem \ref{th:cosympdef}, we consider HS symplectic structures on a manifold $M$ which extend the normal form of Equation \ref{eq:normalform}. The following result, which is folklore in the literature, provides a method of producing such structures from the data of a symplectic manifold $(M, \omega)$ with a cosymplectic submanifold $W$. Recall that $W$ is cosymplectic if there exists a vector field $X \in \mathfrak{X}(U)$ in a neighbourhood $U$ of $W$ which is both transverse to $W$ and is symplectic, in the sense that $L_{X}\omega = 0$. 

\begin{lemma} \label{lem:desing}
Let $(M, \omega)$ be a symplectic manifold and let $W \subset M$ be a compact cosymplectic submanifold. Let $k$ be an odd positive integer. Then there exists a $b^{k+1}$-symplectic form $\tilde{\omega}$ which agrees with $\omega$ outside of a tubular neighbourhood of $W$ and has local model around $W$ given by Equation \ref{eq:normalform}. 
\end{lemma}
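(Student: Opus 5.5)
The plan is to put $\omega$ into a product normal form near $W$ using the transverse symplectic vector field, and then to reparametrize the normal coordinate by a monotone function that has a pole of order $k$ along $W$. The condition that $k$ be odd is exactly what makes such a monotone reparametrization exist on both sides of $W$.

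\textbf{Step 1: normal form via the flow.} Let $X$ be a symplectic vector field defined near $W$ and transverse to $W$. Since $W$ is compact, the flow of $X$ gives a diffeomorphism $\Psi: W \times (-\varepsilon,\varepsilon) \to U$ onto a tubular neighbourhood $U$ of $W$, with $\Psi_*\partial_s = X$. Because $L_X\omega = 0$, the form $\Psi^*\omega$ is invariant under translation in $s$. Therefore
\[
\Psi^*\omega = B + a \wedge ds,
\]
where $B = \iota_W^*\omega$ and $a = \iota_W^*(\iota_X\omega)$ are pulled back from $W$. Both are closed: $B$ is a restriction of a closed form, and $\iota_X\omega$ is closed by Cartan's formula. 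Non-degeneracy of $\omega$ gives
\[
\omega^{n+1} = (n+1)\, B^n \wedge a \wedge ds \neq 0 .
\]
Hence $B^n\wedge a$ is a volume form on $W$, so $a$ is nowhere vanishing and $(a,B)$ is a cosymplectic structure on $W$. The normal bundle is trivialized by $s$, so $L = W\times\mathbb{R}$.

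\textbf{Step 2: singular reparametrization.} Choose a smooth function $f$ on $(-\varepsilon,0)\cup(0,\varepsilon)$ with the following properties:
\begin{itemize}
\item $f' > 0$ everywhere;
\item $f(s) = -s^{-k}$ for $|s| < \varepsilon/3$;
\item $f(s) = s$ for $|s| > 2\varepsilon/3$.
\end{itemize}
This is where the oddness of $k$ enters. We have $\frac{d}{ds}(-s^{-k}) = k s^{-k-1} > 0$ because $k+1$ is even. Moreover $-s^{-k}$ tends to $-\infty$ as $s\to 0^+$ and to $+\infty$ as $s \to 0^-$, again because $k$ is odd. So on $(0,\varepsilon)$ we need an increasing function from $-\infty$ to values near $\varepsilon$, and on $(-\varepsilon,0)$ an increasing function from values near $-\varepsilon$ to $+\infty$. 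Both are clearly possible by smoothing. For even $k$ the required monotonicity would fail on one side, since $-s^{-k}$ would then be decreasing there.

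We then define $\tilde\omega = B + a\wedge df(s)$ on $U$ and $\tilde\omega = \omega$ outside $U$. These agree on the overlap because $f(s) = s$ near $|s|=\varepsilon$.

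\textbf{Step 3: verification.} The form $\tilde\omega$ is closed since $B$ and $a$ are closed. Away from $W$ it is non-degenerate, because
\[
\tilde\omega^{n+1} = (n+1)\, f'(s)\, B^n\wedge a\wedge ds
\]
and $f' > 0$. Near $W$,
\[
\tilde\omega = B + k\, a\wedge \frac{ds}{s^{k+1}}.
\]
Setting $t = -s$ and using that $k+1$ is even, this becomes $B - k\, a\wedge \frac{dt}{t^{k+1}}$, which is the local model \eqref{eq:normalform}. In particular $\tilde\omega$ is a form on the Scott $b^{k+1}$-algebroid defined by the defining function $t$.

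Its non-degeneracy as an algebroid form near $W$ can be checked in either of two ways. One is directly on the basis of Corollary \ref{localbasiscor}. The other is via Lemma \ref{nondegeneracy} with trivial splitting: there $\alpha_k = -k\,a$ is nowhere vanishing, and $\gamma = B$ is non-degenerate on $\ker a$ because $B^n\wedge a$ is a volume form.

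\textbf{Expected obstacle.} The only delicate points are the sign and parity bookkeeping in Step 2, and the construction of the flow-invariant normal form in Step 1 so that the product decomposition holds exactly rather than only to first order. Everything else is routine.
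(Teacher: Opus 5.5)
The paper gives no proof of this lemma (it is cited as folklore), so there is no argument of the authors' to compare against. Your proposal supplies the standard argument and it is correct and complete: straighten $\omega$ along the flow of the transverse symplectic vector field, then reparametrize the normal coordinate by a monotone function with a pole of order $k$. Your parity analysis is right and matches the paper's remark that even $k$ forces a doubling of $W$. Gluing to $B + a\wedge ds$ near $|s|=\varepsilon$ forces $f'>0$ on both components, and $f' = c\,s^{-k-1}$ with a single constant $c$ on both sides has constant sign exactly when $k+1$ is even.

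Two small points. First, from $\Psi^*\omega = B + a\wedge ds$ one gets $\iota_{\partial_s}\Psi^*\omega = -a$, so $a = -\iota_W^*(\iota_X\omega)$ rather than $+\iota_W^*(\iota_X\omega)$. This sign is immaterial, since you only use the existence of a closed $a$ with $\Psi^*\omega = B + a\wedge ds$. Second, the interpolation in Step 2 works because the required increase on each transition interval is positive: $f(2\varepsilon/3)-f(\varepsilon/3) = f(-\varepsilon/3)-f(-2\varepsilon/3) = 2\varepsilon/3 + (3/\varepsilon)^k$. Hence one can prescribe a positive $f'$ with the given germs at the ends and this integral.

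Finally, your form is also symplectic on $A(a)$, which is what Theorem \ref{th:cosympdef} actually needs. For $A(a)$ one has $\tau_k(t_k) = \frac{dt}{t^{k+1}} - \frac{a}{t^k}$, and since $a\wedge a=0$, your form equals $B - ka\wedge \tau_k(t_k)$ near $W$. So it is simultaneously a symplectic form on $A(a)$, not only on the Scott $b^{k+1}$-algebroid.
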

\begin{remark}
This construction is known as the `singularization' of a symplectic form. The converse, namely the desingularization of a $b^k$-symplectic form, is described in \cite{MR3952555}. It is also possible to construct a singularization in the case that $k$ is even, but it requires doubling the singular hypersurface $W$. 
\end{remark}
\subsection{Mapping Tori} \label{sec:mapping} A compact cosymplectic manifold $(W, B, a)$ must be a mapping torus. Indeed, $a$ is a closed nowhere vanishing $1$-form and so the result follows by Tischler's theorem \cite{MR256413}. In this section, we specialize to the case where $a$ is pulled back from the base $S^1$ of the mapping torus and give a construction of order $4$ symplectic HS algebroids. 

Let $(N, B)$ be a compact symplectic manifold and let $\phi \in \mathrm{Symp}(N,B)$ be a symplectomorphism. Define the mapping torus 
\[
T(\phi) = (N \times \mathbb{R}) /\mathbb{Z}, 
\]
where $(\phi(n), \theta) \sim (n, \theta + 1)$. Then $(T(\phi), B, a = \lambda d \theta)$ is compact cosymplectic for every non-zero value of $\lambda$. Consider the trivial line bundle $L = T(\phi) \times \mathbb{R}$ with connection $\nabla = d - a$. Let $A(a)$ be the order $4$ HS algebroid for $(T(\phi),L)$ as in Section \ref{sec:cosymplectic}, and let $\omega$ be the algebroid symplectic form given by Equation \ref{eq:normalform}, where $k = 3$. 

First, we extend this HS symplectic structure to $M = T(\phi) \times S^1$, where $S^1 = \mathbb{R}/2 \pi \mathbb{Z}$ with coordinate $u$. The extension is given by 
\[
\omega = B - 3 a \wedge \frac{du}{16 \sin^4(\frac{u}{2})},
\]
which is locally equivalent to Equation \ref{eq:normalform} via a coordinate transformation.

The cohomology of $\nabla^{-i}$ is computed in Theorem \ref{lambdacohomology}. In particular, we find that 
\[
H^1_{T(\phi)}(L^{-1}) \cong H^{1}(N)_{e^{\lambda}}, \qquad H^1_{T(\phi)}(L^{-2}) \cong H^{1}(N)_{e^{2\lambda}},
\]
where $H^1(N)_{u}$ denotes the $u$-eigenspace of the map $\phi_{*} : H^1(N) \to H^1(N)$. By \cite[Proposition 10.9]{foliationpaper}, the $G_{3}$-character variety of $(T(\phi), M,a)$ is given by 
\[
M_{3}(M, T(\phi), a) \cong ( H^{1}(N)_{e^{\lambda}} \times H^{1}(N)_{e^{2\lambda}})/\mathbb{R}_{>0},
\]
where $\mathbb{R}_{>0}$ acts on the two factors with respective weights $-1$ and $-2$. Let $d(l) = \mathrm{dim}(H^1(N)_{e^l})$, then removing the origin from $M_{3}(M, W, a)$, we obtain a space which is homeomorphic to the sphere $S^{d(\lambda) + d(2 \lambda) - 1}$.

\begin{corollary} \label{rusapplication}
Let $(N,B)$ be a compact symplectic manifold with symplectomorphism $\phi \in \mathrm{Symp}(N,B)$, let $T(\phi)$ be the mapping torus, and let $\mathcal{F}$ be the foliation given by the fibres of the projection $\pi : T(\phi) \to S^1$. Let $H^1(N)_{u}$ denote the $u$-eigenspace of $\phi_{*} : H^1(N) \to H^1(N)$. Then, for every $\lambda \neq 0$, there is a map 
\[
\Phi: ( H^{1}(N)_{e^{\lambda}} \times H^{1}(N)_{e^{2\lambda}})/\mathbb{R}_{>0} \to \mathrm{Pois}(M, T(\phi), \mathcal{F}),
\]
where $\mathrm{Pois}(M, T(\phi), \mathcal{F})$ is the space of Poisson structures on $M = T(\phi) \times S^1$ whose symplectic leaves consist of $T(\phi) \times (S^1 \setminus \{ 1 \})$ and the leaves of $\mathcal{F}$, modulo isotopies preserving $T(\phi) \times \{ 1 \}$ and $\mathcal{F}$. 

Given an element $(\eta_1, \eta_2) \in H^{1}(N)_{e^{\lambda}} \times H^{1}(N)_{e^{2\lambda}}$, the symplectic variation of $\Phi(\eta_1, \eta_2)$ along $\mathcal{F}$, restricted to a fibre of $\pi$, is given by 
\[
\eta_2 \wedge \eta_1 \in H^1(N)_{e^{3 \lambda}},
\]
and is well-defined up to multiplication by a positive number. 
\end{corollary}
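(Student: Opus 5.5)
Corollary \ref{rusapplication} is the specialization of Theorem \ref{th:cosympdef} to the cosymplectic manifold $(T(\phi), B, a = \lambda d\theta)$ with $k = 3$, combined with the description of the character variety $M_3(M, T(\phi), a)$ recalled from \cite[Proposition 10.9]{foliationpaper}. The plan is to check the hypotheses of Theorem \ref{th:cosympdef}, identify its domain with the stated quotient, and then compute the extension class explicitly.

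First, $T(\phi)$ sits in $M = T(\phi)\times S^1$ as the hypersurface $T(\phi)\times\{1\}$, i.e.\ $u = 0$. Its normal bundle is trivial. The explicit form
\[
\omega = B - 3a \wedge \frac{du}{16\sin^4(u/2)}
\]
is a global algebroid symplectic form on $M$. Near $u=0$, the substitution $t = 2\tan(u/4)$ (or a similar coordinate change) brings it to the normal form \eqref{eq:normalform} with $k=3$. Away from $u = 0$ it is $B$ plus a nonvanishing multiple of $a\wedge du$, and $B^n\wedge a\wedge du \neq 0$ there, so it is symplectic. The cosymplectic condition holds because $B^n\wedge d\theta$ is a volume form on the mapping torus. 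Theorem \ref{th:cosympdef} therefore yields
\[
\Phi : M_3(M, T(\phi), a)\to \mathrm{Pois}(M, T(\phi), \mathcal{F}), \qquad \mathcal{F} = \ker(a) = \text{fibres of } \pi .
\]
Using \cite[Proposition 10.9]{foliationpaper}, we identify $M_3(M,T(\phi),a)$ with $(H^1(N)_{e^\lambda}\times H^1(N)_{e^{2\lambda}})/\mathbb{R}_{>0}$. Concretely, a pair $(\eta_1,\eta_2)$ is realized by a splitting $\sigma = \nabla + \eta_1 + \eta_2$ with $\eta_i \in \Omega^1(L^{-i})$ $d^{\nabla}$-closed, as in Theorem \ref{lambdacohomology}. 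The Maurer--Cartan equations of Theorem \ref{MCequation} for $k=3$ are
\[
r=1:\quad d^\nabla\eta_1 = 0, \qquad r=2:\quad d^\nabla\eta_2 = 0,
\]
since for $r=2$ the only term $i=j=1$ has coefficient $j-i=0$. So any such closed pair gives a valid splitting, exactly as in the cat map example of Section \ref{cat}.

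Next, the variation. By Section \ref{extsection},
\[
e(\sigma) = \tfrac12\sum_{i+j=3}(j-i)\,\eta_i\wedge\eta_j = \eta_1\wedge\eta_2 ,
\]
so Theorem \ref{th:cosympdef} gives $\mathrm{var}(\Phi(\eta_1,\eta_2)) = -\eta_1\wedge\eta_2|_{\mathcal{F}} = \eta_2\wedge\eta_1|_{\mathcal{F}}$.

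The main obstacle is the final identification: restricting this foliated class to a single fibre $N$. Writing $\eta_i = e^{-i\lambda\theta}\tilde\eta_i$ with $\tilde\eta_i$ representing eigenclasses, the restriction to the fibre $\theta = 0$ is the cup product $\tilde\eta_2\wedge\tilde\eta_1$. This class lies in the $e^{3\lambda}$-eigenspace of $\phi_*$ on $H^2(N)$ (the statement's ``$H^1$'' presumably means $H^2$), since $\phi_*$ is a ring map. I would check that the foliated cohomology class restricted to one leaf is exactly this cup product, using Appendix \ref{app:cohomappingtori}.

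Well-definedness up to positive scalars follows from the $\mathbb{R}_{>0}$-action. That action has weights $-1$ and $-2$, so it rescales $\eta_2\wedge\eta_1$ by a positive constant. This matches the statement in Theorem \ref{th:cosympdef} that the variation is determined up to positive functions constant on leaves; restricted to a fibre, such a function is a positive number.
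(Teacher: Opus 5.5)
Your proposal follows the paper's route: specialize Theorem \ref{th:cosympdef} to $(T(\phi),B,a=\lambda d\theta)$ with $k=3$, identify $M_3(M,T(\phi),a)$ via \cite[Proposition 10.9]{foliationpaper}, and read off the variation as $-e(\sigma)=-\eta_1\wedge\eta_2=\eta_2\wedge\eta_1$; you are also right that this class should live in $H^2(N)_{e^{3\lambda}}$. One small slip is that $t=2\tan(u/4)$ does not work, since it gives $dt/t^4\approx 8\,du/u^4$; an exact coordinate change does exist, e.g.\ $t$ defined by $\frac{1}{3t^3}=\frac18\cot(u/2)+\frac1{24}\cot^3(u/2)$, which is smooth near $u=0$ with $t'(0)=1$.
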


The following generalizes the example from Sections \ref{cat} and \ref{cat2} and produces arbitrarily large families of order 4 HS symplectic structures on compact $6$-dimensional manifolds. 

\begin{example}[Arnold's cat map revisited] \label{ex:lastcat} Let $S_{g}$ be a genus $g \geq 1$ surface and choose a symplectic form $\omega \in \Omega^2_{S_{g}}$. Choosing a symplectic basis of curves on $S_{g}$ fixes an isomorphism $H_1(S_{g}, \mathbb{Z}) \cong \mathbb{Z}^{2g}$ for which the intersection pairing is written in the standard form. Let $\Gamma = \begin{pmatrix} 2 & 1 \\ 1 & 1 \end{pmatrix} \in \mathrm{SL}(2, \mathbb{Z})$. Then $A = (\Gamma^{-1})^{\oplus g} \in \mathrm{Sp}(2g, \mathbb{Z})$ is a symplectic automorphism. By \cite[Theorem 6.4]{farb2011primer} there is a symplectomorphism $\psi \in \mathrm{Symp}(S_{g}, \omega)$ which realizes $A$, in the sense that the map induced by $\psi$ on $H_1(S_{g}, \mathbb{Z})$ is given by $A$. It follows that on cohomology, we have 
\[
\psi_{*} = A^{-1} = \Gamma^{\oplus g} : \mathbb{R}^{2g} \to \mathbb{R}^{2g},
\]
where we are using the symplectic basis of curves to identify $H^{1}_{S_{g}} \cong \mathbb{R}^{2g}$. The eigenvalues of $\psi_{*}$ are $r^{\pm1} = 1 \pm \varphi^{\pm1}$, where $\varphi = \frac{1 + \sqrt{5}}{2}$ is the golden ratio, and $\psi_*$ induces the eigenspace decomposition 
\[
H^1_{S_{g}} = E_{r} \oplus E_{r^{-1}}. 
\]
Note that $\mathrm{dim}(E_{r}) = \mathrm{dim}(E_{r^{-1}}) = g$. 

Now choose a pair of symplectic surfaces $(S_{g_{1}}, \omega_1), (S_{g_{2}}, \omega_2)$ with respective genera $g_{1}, g_{2} \geq 1$. Let $\psi_1 : S_{g_{1}} \to S_{g_{1}}$ be the symplectomorphism realizing $A$ and let $\psi_2 : S_{g_{2}} \to S_{g_{2}}$ be the symplectomorphism realizing $A^2$. Define $N = S_{g_{1}} \times S_{g_{2}}$, with symplectic form $B = \omega_1 + \omega_2$. The product $\phi = \psi_1 \times \psi_2 \in \mathrm{Symp}(N,B)$ is a symplectomorphism. It follows from the discussion above that the pushforward $\phi_*$ on cohomology induces the following eigenspace decomposition 
\[
H^1_{N} \cong H^{1}_{S_{g_{1}}} \oplus H^{1}_{S_{g_{2}}} = E_{r} \oplus E_{r^{-1}} \oplus E_{r^2} \oplus E_{r^{-2}},
\]
where the respective dimensions are $g_{1}, g_{1}, g_{2}, g_{2}$. Now let $M = T(\phi) \times S^1$, where $T(\phi)$ is the mapping torus equipped with the cosymplectic structure $(B, a = \log(r) d \theta)$. Then by Corollary \ref{rusapplication}, there is a map 
\[
\Phi : (E_{r} \times E_{r^2})/\mathbb{R}_{>0} \cong S^{g_1 + g_2 -1} \cup \{ 0 \} \to  \mathrm{Pois}(M, T(\phi), \mathcal{F}).
\]
Given a pair $(\eta_{1}, \eta_{2}) \in (E_{r} \times E_{r^2})/\mathbb{R}_{>0} $, the restriction of the symplectic variation of $\Phi(\eta_1, \eta_2)$ to a fibre of $\pi$ is given by 
\[
- \eta_1 \otimes \eta_2 \in E_{r} \otimes E_{r^2} / \mathbb{R}_{>0}. \hfill \qedhere
\]
\end{example}

\section{Open questions} \label{sec:ques}
We end the paper by raising a few remaining unanswered questions. First, as already noted, by Tischler's theorem \cite{MR256413}, the submanifolds $W$ in Theorem \ref{th:cosympdef} will always be mapping tori because a cosymplectic structure includes a nowhere vanishing closed $1$-form $a$. In terms of the data describing a closed $2$-form in Lemma \ref{lem:sympdata}, this is the form $\alpha_k$. In fact, this observation provides a general topological constraint on the degenerating hypersurface of any $b$ or $b^m$-symplectic manifold. However, for general HS symplectic structures, there is a possible loophole. Indeed, a closer look at the conditions in Lemma \ref{lem:sympdata} reveals that $\alpha_k$ need only be closed with respect to a \emph{flat connection} on the normal bundle $\nu_W$. Therefore, this leaves open the possibility of constructing HS symplectic forms where the compact hypersurface $W$ is not a mapping torus. We leave this as an open problem: 

\begin{question} 
Do there exist compact hypersurface symplectic algebroids $(A, \omega)$ for which the hypersurface $W$ is not a mapping torus? 
\end{question}

Finally, we note the following natural question, which does not appear to have been addressed in the literature (although, see \cite{Rankin26} for an analogous question studied in the setting of linear Poisson structures). 

\begin{question} \label{quest:lift}
Let $Q$ be a Poisson structure on a manifold $M$ which is generically symplectic but drops rank along a hypersurface $W$. What are the conditions on $Q$ so that it lifts to a symplectic form on a hypersurface algebroid? 
\end{question}

\begin{remark}
A Poisson structure $Q$ can lift to a symplectic form on several different hypersurface algebroids, a fact which is exploited in the proof of Theorem \ref{th:cosympdef}. In addition to Question \ref{quest:lift}, it also seems natural to ask for a classification of all hypersurface algebroids to which a given Poisson structure lifts. 
\end{remark}

\appendix
\section{Characteristic classes of (symplectic) foliations} \label{AppFol} 
In this appendix we recall some of the theory of codimension $1$ (symplectic) foliations and their characteristic classes. Let $D \subset TM$ be a codimension $1$ distribution. Let $\nu = TM/D$ be the normal bundle and let $\theta \in \Omega^{1}_{M}(\nu)$ denote the quotient map arising from the sequence 
\[
0 \to \mathcal{F} \to TM \to \nu \to 0. 
\]
Hence $D = \mathrm{ker}(\theta)$. The involutivity condition for $D$ can be stated purely in terms of $\theta$ as follows. 

\begin{lemma} \label{involutivitycondition}
Let $\nu$ be the normal bundle of a corank $1$ distribution $D$ on $M$ and let $\theta \in \Omega^1_{M}(\nu)$ be the associated $1$-form. Let $\nabla$ be a flat connection on $\nu$. Then $D =  \mathrm{ker}(\theta)$ is involutive if and only if 
\[
d^{\nabla} \theta = \beta \wedge \theta,
\]
for some $\beta \in \Omega^1_{M}$. 
\end{lemma}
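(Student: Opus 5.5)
We prove both directions by working pointwise with the quotient map $\theta$ and the Cartan formula for the twisted differential $d^{\nabla}$. Since $\theta \in \Omega^1_M(\nu)$ is surjective onto the line bundle $\nu$, it is nowhere vanishing. We use the identity
\[
d^{\nabla}\theta(X,Y) = \nabla_X(\theta(Y)) - \nabla_Y(\theta(X)) - \theta([X,Y]),
\]
valid for any connection $\nabla$ on $\nu$.

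For the direction ($\Leftarrow$), suppose $d^{\nabla}\theta = \beta\wedge\theta$. Take $X,Y \in \Gamma(D)$, so that $\theta(X)=\theta(Y)=0$. The right-hand side then gives $(\beta\wedge\theta)(X,Y) = \beta(X)\theta(Y)-\beta(Y)\theta(X) = 0$. The Cartan formula reduces to $d^{\nabla}\theta(X,Y) = -\theta([X,Y])$. Hence $\theta([X,Y])=0$, so $[X,Y]\in D$ and $D$ is involutive.

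For the direction ($\Rightarrow$), suppose $D$ is involutive. The formula above shows $d^{\nabla}\theta(X,Y)=0$ for $X,Y\in D$, so $d^{\nabla}\theta \in \Omega^2_M(\nu)$ vanishes on $\wedge^2 D$. The remaining task is the linear-algebra fact that a $\nu$-valued $2$-form vanishing on $\wedge^2\ker\theta$ is divisible by $\theta$. Locally we pick a non-vanishing section $e$ of $\nu$ and a vector field $Z$ with $\theta(Z)=e$, and define
\[
\beta_{\mathrm{loc}} = -\langle \iota_Z d^{\nabla}\theta, e^{-1}\rangle,
\]
where $e^{-1}$ denotes the dual section. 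We check $d^{\nabla}\theta = \beta_{\mathrm{loc}}\wedge\theta$ by evaluating both sides on pairs $(X,Y)$ with $X,Y\in D$ and on pairs $(Z,Y)$ with $Y\in D$. These pairs span $\wedge^2 TM$ because $TM = D\oplus\langle Z\rangle$.

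The only real obstacle is globalizing $\beta$, since $\beta$ is not unique: it can be changed by any multiple $f\theta$ when $\nu$ is trivial. The equation $d^{\nabla}\theta=\beta\wedge\theta$ is affine-linear in $\beta$, so it survives convex combinations. We therefore patch the local $\beta_{\mathrm{loc}}$ with a partition of unity $\{\rho_j\}$, setting $\beta=\sum_j\rho_j\beta_j$. Then
\[
\beta\wedge\theta=\sum_j\rho_j\,\beta_j\wedge\theta=\sum_j\rho_j\, d^{\nabla}\theta = d^{\nabla}\theta,
\]
which completes the proof. Flatness of $\nabla$ is not actually used; it only makes $d^{\nabla}$ a differential in later applications.
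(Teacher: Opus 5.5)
Your argument is correct and complete. The Cartan formula for $d^{\nabla}\theta$ gives the implication from $d^{\nabla}\theta=\beta\wedge\theta$ to involutivity. For the converse, the local division $\beta_{\mathrm{loc}}=-\langle \iota_Z d^{\nabla}\theta, e^{-1}\rangle$ works, and patching with a partition of unity is legitimate because the equation is affine in $\beta$.

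The paper states this lemma without proof as a standard fact, and your argument is the standard one it implicitly relies on. You are also right that flatness of $\nabla$ plays no role in this statement.
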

Now let $\mathcal{F} \subset TM$ be a codimension $1$ foliation (i.e. an involutive distribution) and let $\iota^* : \Omega^{\bullet}_{M} \to \Omega^{\bullet}_{\mathcal{F}}$ be the restriction map. The kernel of $\iota^*$ is isomorphic to $\Omega^{\bullet - 1}_{\mathcal{F}} \otimes \nu^*$ and the differential on this subcomplex is induced by the natural Bott connection $\nabla^{Bott}$ on $\nu$, which is defined by bracketing sections of $\mathcal{F}$ and $\nu$. Hence, we obtain a short exact sequence of cochain complexes: 
\begin{equation} \label{eq:folseq}
0 \to \Omega^{\bullet - 1}_{\mathcal{F}} \otimes \nu^* \to  \Omega^{\bullet}_{M} \to \Omega^{\bullet}_{\mathcal{F}} \to 0.
\end{equation}
This induces a long exact sequence in cohomology, with connecting homomorphism 
\[
\delta_\nu: H^{k}_{\mathcal{F}} \to H^{k}_{\mathcal{F}}(\nu^*),
\]
which we name the \emph{variation map}. 

Given the foliation $\mathcal{F}$, choose a flat connection $\nabla$ on the normal bundle $\nu$. Then $d^{\nabla} \theta = \beta \wedge \theta$, for some $\beta \in \Omega^{1}_{M}$, as in Lemma \ref{involutivitycondition}. The restriction $\beta|_{\mathcal{F}} \in \Omega_{\mathcal{F}}^{1}$ is a well-defined closed foliated $1$-form. The cohomology class 
\[
m(\mathcal{F}, \nabla) = [\beta|_{\mathcal{F}}] \in H^{1}(\mathcal{F})
\]
is the \emph{modular class} of $(\mathcal{F}, \nabla)$. Note that this class does depend on the choice of connection. Indeed, if $\nabla' = \nabla + \alpha$ is another flat connection, for $\alpha \in \Omega^1_{M}$ a closed $1$-form, then the modular class changes by adding the cohomology class of this form:
\[
m(\mathcal{F}, \nabla') = m(\mathcal{F}, \nabla) + \iota^*[\alpha]. 
\]
However, since gauge transformations only affect $\nabla$ by adding \emph{exact} forms, they do not change the modular class. 

\begin{lemma}
The modular class $(\mathcal{F}, \nabla)$ only depends on $\nabla$ through its gauge equivalence class. 
\end{lemma}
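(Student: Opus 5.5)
The plan is to compare the modular classes defined by two gauge-equivalent flat connections directly. Suppose $\nabla' = g \cdot \nabla$ for a gauge transformation $g$, that is, a nowhere vanishing function (an automorphism of the line bundle $\nu$). I will first write out the effect of the gauge action on the connection form: locally, and in fact globally since $\nu$ is a line bundle and $g$ is a global nonvanishing function, we have $\nabla' = g \circ \nabla \circ g^{-1} = \nabla - d\log|g|$. So $\nabla' = \nabla + \alpha$ with $\alpha = -d\log|g|$ an exact $1$-form. The sign convention does not matter for what follows.

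Next I will rerun the computation behind the change-of-connection formula stated just before the lemma, to make sure it holds as an identity of forms and not only of classes. Take the same $\theta \in \Omega^1_M(\nu)$. Then
\[
d^{\nabla'}\theta = d^{\nabla}\theta + \alpha \wedge \theta = (\beta + \alpha)\wedge\theta,
\]
so one admissible choice is $\beta' = \beta + \alpha$. I also need independence of the choice of $\beta$. Any other $\beta''$ with $d^{\nabla'}\theta = \beta''\wedge\theta$ satisfies $(\beta''-\beta')\wedge\theta = 0$. Hence $\beta''-\beta'$ is a multiple of $\theta$ and vanishes on $\mathcal{F}$. So the restriction $\beta'|_{\mathcal{F}}$ is well defined.

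Finally,
\[
m(\mathcal{F},\nabla') - m(\mathcal{F},\nabla) = [\iota^*\alpha] = -[d_{\mathcal{F}}\,\iota^*\log|g|] = 0 \in H^1(\mathcal{F}),
\]
because $\iota^*$ commutes with differentials and so sends exact forms to foliated-exact forms.

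The only point needing care is identifying the gauge action precisely. One also has to check that an arbitrary gauge transformation of the flat line bundle $\nu$ really shifts the connection by $d\log|g|$ with $g$ globally defined. This holds because automorphisms of a real line bundle are exactly nonvanishing functions. I expect no real obstacle here.
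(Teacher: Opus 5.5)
Your proof is correct and follows the same argument as the paper. The paper notes that replacing $\nabla$ by $\nabla+\alpha$ shifts the modular class by $\iota^*[\alpha]$, and that gauge transformations change $\nabla$ only by exact forms; your computation $\nabla' = \nabla - d\log|g|$ and your check that $\beta|_{\mathcal{F}}$ does not depend on the choice of $\beta$ fill in details the paper leaves implicit.
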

On the other hand, since $ \iota^*[\alpha]$ is in the kernel of the connecting homomorphism $\delta_{\nu}$, the variation of $m(\mathcal{F}, \nabla)$ is independent of $\nabla$. 

\begin{definition} \label{def:varfol}
The \emph{variation} of a corank $1$ foliation $\mathcal{F}$ is defined to be the cohomology class
\[
\mathrm{var}(\mathcal{F}) = \delta_{\nu}(m(\mathcal{F}, \nabla)) \in H^{1}_{\mathcal{F}}(\nu^*).
\]
We say that $\mathcal{F}$ has \emph{trivial variation} if $\mathrm{var}(\mathcal{F}) = 0$. 
\end{definition}

\begin{proposition} \label{prop:bottextend}
Consider a corank $1$ foliation $\mathcal{F}$, equipped with a flat connection $\nabla$ on the normal bundle $\nu$, let $\theta \in \Omega^1_{M}(\nu)$ be the associated $1$-form, and let $m = m(\mathcal{F}, \nabla)  \in H^{1}(\mathcal{F})$ be the modular class. Then the following statements are equivalent 
\begin{itemize}
\item $\mathrm{var}(\mathcal{F}) = \delta_{\nu}(m) = 0$, 
\item there exists a flat connection $\tilde{\nabla}$ on $\nu$ such that $d^{\tilde{\nabla}}\theta = 0$,
\item there exists a flat extension of the Bott connection on $\nu$. 
\end{itemize}
Furthermore, if these conditions are satisfied, then the flat extension of the Bott connection is unique up to an element of $H_{\mathcal{F}}^{0}(\nu^*)$. In other words, it is unique up to a flat section of the Bott connection on $\nu^*$. 
\end{proposition}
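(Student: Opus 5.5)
The plan is to pass through the short exact sequence \eqref{eq:folseq}, writing $\theta = [-]$ for the quotient map. The three equivalences come from the long exact sequence in cohomology and from interpreting connections as closed $1$-forms; uniqueness then comes from exactness at the next stage.

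\emph{(1)$\Leftrightarrow$(2).} Fix the flat $\nabla$. By Lemma \ref{involutivitycondition} we have $d^{\nabla}\theta = \beta\wedge\theta$, and since $\nabla$ is flat, $0 = (d^\nabla)^2\theta = d\beta\wedge\theta - \beta\wedge\beta\wedge\theta = d\beta\wedge\theta$. This shows two things. First, $\beta|_{\mathcal{F}}$ is foliated closed. Second, $d\beta$ lies in the kernel of $\iota^*$, i.e. in $\Omega^{1}_{\mathcal{F}}\otimes\nu^*$. By the definition of the connecting map, $\delta_\nu(m)$ is represented by $d\beta$, regarded in $\Omega^1_{\mathcal{F}}(\nu^*)$, where $\beta$ is a lift of $\beta|_\mathcal{F}$.

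Now suppose $\delta_\nu(m)=0$. By exactness, $[\beta|_{\mathcal{F}}]$ lies in the image of $\iota^*:H^1(M)\to H^1_{\mathcal{F}}$. So there is a closed $\alpha\in\Omega^1_M$ and a function $f$ with $\beta|_{\mathcal F} = \alpha|_{\mathcal F} + d_{\mathcal F}f$, which means $\beta = \alpha + df + g\theta$ for a suitable $g\in\Omega^0(\nu^*)$. Set $\tilde\nabla = \nabla - (\alpha + df)$; this connection is flat because $\alpha+df$ is closed. Then
\[
d^{\tilde\nabla}\theta = \beta\wedge\theta - (\alpha+df)\wedge\theta = g\theta\wedge\theta = 0 .
\]
For the converse, if $d^{\tilde\nabla}\theta=0$ then $\beta=0$ works for $\tilde\nabla$, so $m(\mathcal F,\tilde\nabla)=0$. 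The variation is independent of the choice of connection, as noted before Definition \ref{def:varfol}, hence $\mathrm{var}(\mathcal F)=0$.

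\emph{(2)$\Leftrightarrow$(3).} This is the same computation as in the proof of Proposition \ref{trivialvariationfoliationinduced}. For $X\in\mathcal F$ and a lift $Y$ of $s$,
\[
(d^{\tilde\nabla}\theta)(X,Y) = \tilde\nabla_X\theta(Y) - \theta([X,Y]) = \tilde\nabla_X s - \nabla^{B}_X s .
\]
So $d^{\tilde\nabla}\theta$ vanishes on $\mathcal F\wedge TM$ if and only if $\tilde\nabla$ extends the Bott connection. It remains to handle the component on $\Lambda^2(TM/\mathcal F)$, but this is zero automatically because $\nu$ is a line bundle: $\Lambda^2\nu=0$. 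So a flat $\tilde\nabla$ extends $\nabla^B$ if and only if $d^{\tilde\nabla}\theta=0$.

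\emph{Uniqueness.} Two flat extensions differ by a closed $\alpha\in\Omega^1_M$ with $\alpha|_{\mathcal F}=0$. Such an $\alpha$ is $c\,\theta$ for $c\in\Omega^0_{\mathcal{F}}(\nu^*)$, where $c$ pairs $\nu^*$ with $\theta$. The closedness condition $d(c\theta)=0$ is exactly $d^{Bott}_{\mathcal F}c=0$ in the subcomplex $\Omega^{\bullet-1}_{\mathcal F}\otimes\nu^*$, i.e. $c\in H^0_{\mathcal F}(\nu^*)$. Conversely, any such $c$ gives a closed $c\theta$, and adding it to $\tilde\nabla$ produces another flat extension.

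I expect the main subtlety to be sign and normalization conventions, namely identifying $\delta_\nu(m)$ with $d\beta$ and the subcomplex differential with the Bott differential. These are routine checks once the conventions are fixed.
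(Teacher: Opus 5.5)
The paper states this proposition in Appendix~\ref{AppFol} as recalled background and gives no proof, so there is nothing of the authors' to compare against. Your argument is correct and complete.

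\textbf{(1)$\Leftrightarrow$(2).} The forward direction correctly combines exactness of the long exact sequence at $H^1_{\mathcal{F}}$ with the gauge change $\tilde\nabla=\nabla-(\alpha+df)$. The key observation there is $(g\theta)\wedge\theta=0$, which holds because $\nu$ is a line bundle. The converse correctly uses the independence of $\mathrm{var}(\mathcal{F})$ from the flat connection, as noted before Definition~\ref{def:varfol}.

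\textbf{(2)$\Leftrightarrow$(3).} Your computation of $(d^{\tilde\nabla}\theta)(X,Y)$ for $X\in\mathcal{F}$, combined with $\Lambda^2\nu=0$, shows the following for any connection, flat or not: $d^{\tilde\nabla}\theta=0$ holds exactly when $\tilde\nabla$ extends $\nabla^B$. This is the clean way to see it.

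\textbf{Uniqueness.} You correctly identify the set of flat extensions as a torsor under Bott-flat sections of $\nu^*$.

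Three small remarks.
\begin{enumerate}
\item The sign and normalization worry at the end is immaterial. You never use the explicit representative $[d\beta]$ of $\delta_\nu(m)$. Only exactness and the vanishing of classes enter.
\item The implication $d\beta\wedge\theta=0\Rightarrow \iota^*d\beta=0$ deserves one line. Evaluate on $X,Y\in\mathcal{F}$ together with a vector $Z$ satisfying $\theta(Z)\neq 0$.
\item In the uniqueness step, you can verify the closedness criterion directly rather than through the identification of the subcomplex differential with the Bott differential. Since $d^{\tilde\nabla}\theta=0$, one has $d(c\theta)=\tilde\nabla^{*}c\wedge\theta$ (with the $\nu^*$--$\nu$ pairing). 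This vanishes if and only if $\tilde\nabla^{*}c$ annihilates $\mathcal{F}$. Because $\tilde\nabla$ extends $\nabla^B$, that is exactly Bott-flatness of $c$.
\end{enumerate}
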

Finally, recall that a \emph{symplectic foliation} consists of the data $(\mathcal{F}, \omega_{\mathcal{F}})$ of a foliation $\mathcal{F}$ along with a closed foliated $2$-form $\omega_{\mathcal{F}} \in \Omega^2_{\mathcal{F}}$ which restricts to a symplectic form on each leaf. 

\begin{definition}
The \emph{symplectic variation} of a symplectic foliation $(\mathcal{F}, \omega_{\mathcal{F}})$ is defined to be the cohomology class 
\[
\mathrm{var}(\omega_{\mathcal{F}}) = \delta_{\nu}[\omega_{\mathcal{F}}] \in H^{2}_{\mathcal{F}}(\nu^*). \hfill\qedhere
\]
\end{definition}

\section{Cohomology of mapping tori} \label{app:cohomappingtori} Let $M$ be a compact manifold and let $\alpha$ be a closed nowhere vanishing $1$-form. By Tischler's theorem \cite{MR256413}, $M$ is a fibre bundle over $S^1$ and $\alpha$ may be approximated by the pullback of $d\theta \in \Omega^1(S^1)$. Hence, let's assume that we have a surjective submersion $f: M \to S^1$ and that $\alpha = f^*(d\theta)$.  

Next, let $H$ be a lift of the vector field $\partial_{\theta}$, and let $\phi: M \to M$ be the time $1$ flow, which covers the identity map on the base $S^1$. Let $N = f^{-1}(1)$ be the fibre above $1$. Then $M$ is the mapping torus for $\phi$. Namely, $M \cong (N \times \mathbb{R})/\mathbb{Z}$, where the action of $\mathbb{Z}$ is given by 
\[
k \ast (n, \theta) = (\phi^{-k}(n), \theta + k). 
\]
The aim of this Appendix is to study the cohomology $H^{\bullet}_{M}(\lambda)$ of the twisted de Rham complex 
\[
\Omega^{\bullet}_{\lambda} = (\Omega^{\bullet}_{M}, d_{\lambda} = d + \lambda \alpha \wedge ).
\] 
Let $\mathcal{F} = \mathrm{ker}(\alpha)$ be the foliation given by the fibres of $f$. The twisted de Rham complex sits in an exact sequence with the foliated de Rham complex, generalizing Equation \ref{eq:folseq} : 
\begin{equation} \label{eq:twistedfol}
0 \to \Omega_{\mathcal{F}}^{\bullet - 1} \to \Omega^{\bullet}_{\lambda} \to \Omega^{\bullet}_{\mathcal{F}} \to 0,
\end{equation}
where the left hand map is given by wedging $\alpha \wedge \ $. Using the vector field $H$, which satisfies $\alpha(H) = 1$, we can split this sequence to obtain 
\[
\Omega_{M}^{r} = \Omega_{\mathcal{F}}^{r} \oplus \alpha \wedge \Omega_{\mathcal{F}}^{r-1}. 
\]
Using this decomposition, the twisted de Rham differential has the form 
\[
d_{\lambda}(\beta + \alpha \wedge \eta) = d_{\mathcal{F}}\beta + \alpha\wedge(( L_{H}\beta + \lambda \beta) - d_{\mathcal{F}}\eta). 
\]
From this, we immediately obtain the following result. 
\begin{lemma}
The short exact sequence \ref{eq:twistedfol} is compatible with differentials, where the left complex is equipped with $-d_{\mathcal{F}}$, the central complex is equipped with $d_{\lambda}$, and the right complex is equipped with $d_{\mathcal{F}}$. Furthermore, the connecting homomorphism of the induced long exact sequence is given by  \[ L_{H} + \lambda id: H^{i}_{\mathcal{F}} \to H^{i}_{\mathcal{F}}. \] 
\end{lemma}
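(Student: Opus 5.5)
The plan is to read everything off from the decomposition $\Omega_{M}^{r} = \Omega_{\mathcal{F}}^{r} \oplus \alpha \wedge \Omega_{\mathcal{F}}^{r-1}$ and the displayed formula
\[
d_{\lambda}(\beta + \alpha\wedge\eta) = d_{\mathcal{F}}\beta + \alpha\wedge\big((L_{H}\beta + \lambda\beta) - d_{\mathcal{F}}\eta\big).
\]
Here $\Omega^{\bullet}_{\mathcal{F}}$ is identified, via $H$, with the forms on $M$ annihilating $H$ (i.e.\ $\iota_H\beta = 0$).

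\textbf{Step 1: the displayed formula.} I would first confirm it, since everything rests on it. For $\beta$ with $\iota_H\beta=0$, write $d\beta = (d\beta)_{\mathcal{F}} + \alpha\wedge \iota_H d\beta$. The first piece is $d_{\mathcal{F}}\beta$. Cartan's formula gives $\iota_H d\beta = L_H\beta - d\iota_H\beta = L_H\beta$, and its component along $\alpha$ dies after wedging with $\alpha$. Next, $d(\alpha\wedge\eta) = -\alpha\wedge d\eta = -\alpha\wedge d_{\mathcal{F}}\eta$ since $d\alpha=0$, and the twisting term contributes $\lambda\alpha\wedge\beta$.

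One point needs care: $L_H$ must preserve the subspace of forms annihilating $H$ (up to terms killed by $\alpha\wedge$), and it must commute with $d_{\mathcal{F}}$. This holds because the flow of $H$ maps fibres of $f$ to fibres, as it covers the rotation flow on $S^1$. Hence $L_H$ descends to an operator on foliated forms commuting with $d_{\mathcal{F}}$, and therefore on $H^{\bullet}_{\mathcal{F}}$. I expect this bookkeeping about $L_H$ on foliated forms to be the only genuinely delicate point.

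\textbf{Step 2: compatibility with differentials.} Putting $\beta=0$ in the formula gives $d_\lambda(\alpha\wedge\eta) = \alpha\wedge(-d_{\mathcal{F}}\eta)$, so the inclusion $\eta\mapsto\alpha\wedge\eta$ is a chain map from $(\Omega_{\mathcal{F}}^{\bullet-1}, -d_{\mathcal{F}})$. For the restriction map, note that $\alpha$ restricts to zero on leaves. Restricting the formula therefore leaves $d_{\mathcal{F}}\beta$, so restriction intertwines $d_\lambda$ with $d_{\mathcal{F}}$. Exactness of the sequence is clear from the splitting.

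\textbf{Step 3: the connecting homomorphism.} Take $[\beta]\in H^i_{\mathcal{F}}$ with $d_{\mathcal{F}}\beta=0$, and lift it to $\beta\in\Omega^i_M$ using the splitting. By the formula, $d_\lambda\beta = \alpha\wedge(L_H\beta+\lambda\beta)$. Its preimage under $\alpha\wedge$ is $L_H\beta+\lambda\beta$, which is $d_{\mathcal{F}}$-closed because $L_H$ commutes with $d_{\mathcal{F}}$. The connecting map is then $[\beta]\mapsto[(L_H+\lambda)\beta]$.

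The sign convention $-d_{\mathcal{F}}$ on the left complex does not alter cohomology classes. Independence of the chosen lift is the standard connecting-homomorphism argument.
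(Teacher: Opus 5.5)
Your proposal is correct and follows the paper's route. The paper derives the lemma ``immediately'' from the displayed formula for $d_{\lambda}$ in the $H$-splitting, which is exactly what your Steps 2 and 3 do, and your Step 1 supplies the verification the paper omits. The point you flag as delicate is in fact automatic: $\iota_H L_H = L_H \iota_H$ shows that $L_H$ preserves the forms annihilating $H$ exactly, and expanding $d_{\lambda}^2 = 0$ in the splitting (or using $L_H \alpha = d(\alpha(H)) = 0$) gives $[L_H, d_{\mathcal{F}}] = 0$.
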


Now we take a closer look at the foliated de Rham complex $\Omega^{\bullet}_{\mathcal{F}}$. Note first that using the pullback by $f$, this vector space is a module over $C^{\infty}(S^1)$. Furthermore, the foliated differential $d_{\mathcal{F}}$ is $C^{\infty}(S^1)$-linear, implying that $H^{i}_{\mathcal{F}}$ is also a $C^{\infty}(S^1)$-module. In fact, this module comes from a vector bundle $\mathcal{H}^{i}$ over the circle: 
\[
H^{i}_{\mathcal{F}} = \Gamma(S^1, \mathcal{H}^{i}). 
\]
The fibres of this vector bundle are 
\[
\mathcal{H}^{i}|_{\theta} = H^{i}(f^{-1}(\theta)). 
\]
Furthermore, the lattice $H^{i}(f^{-1}(\theta), \mathbb{Z})^{\mathrm{free}} \subset H^{i}(f^{-1}(\theta))$ defines the flat Gauss-Manin connection $\nabla^{GM}$ on the vector bundle. 

\begin{lemma}
The operator $L_{H}$ descends to $\nabla^{GM}_{\partial_{\theta}}$ on cohomology. 
\end{lemma}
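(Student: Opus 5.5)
The plan is to show three things in turn: $L_{H}$ acts on the foliated complex $\Omega^{\bullet}_{\mathcal{F}}$; it commutes with $d_{\mathcal{F}}$, so it descends to $H^{\bullet}_{\mathcal{F}}$; and, fibre by fibre, the induced operator is the derivative of the fibrewise classes under the identification of nearby fibres given by the flow of $H$, which is what $\nabla^{GM}_{\partial_\theta}$ computes.

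\emph{Step 1: $L_H$ descends to $\Omega^\bullet_{\mathcal{F}}$.} Since $H$ lifts $\partial_\theta$, its flow $\phi_s$ covers rotation by $s$ on $S^1$, so it maps fibres of $f$ to fibres. Equivalently, $[H,\Gamma(\mathcal{F})]\subset \Gamma(\mathcal{F})$. At the level of forms, $L_H\alpha = d\iota_H\alpha = d(1)=0$. The kernel of the restriction $\iota^*:\Omega^\bullet_M\to\Omega^\bullet_{\mathcal{F}}$ is the ideal $\alpha\wedge\Omega^{\bullet-1}_M$, and $L_H$ preserves it because $L_H(\alpha\wedge\eta)=\alpha\wedge L_H\eta$. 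Hence $L_H$ induces an operator on $\Omega^\bullet_{\mathcal{F}}$. This operator is exactly the term $L_H\beta$ that appears in the formula for $d_\lambda$ under the splitting by $H$. Because $L_H$ commutes with $d$ on $\Omega^\bullet_M$ and $d_{\mathcal{F}}$ is induced from $d$, $L_H$ commutes with $d_{\mathcal{F}}$. It therefore sends foliated closed forms to closed forms and foliated exact forms to exact forms, so it descends to $H^{i}_{\mathcal{F}}=\Gamma(S^1,\mathcal{H}^i)$.

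\emph{Step 2: a fibrewise formula.} Let $\beta\in\Omega^i_{\mathcal{F}}$ be foliated closed, and write $\beta_\theta=\beta|_{f^{-1}(\theta)}$. Its class is the section $c(\theta)=[\beta_\theta]\in H^i(f^{-1}(\theta))$. By the definition of the Lie derivative via the flow,
\[
(L_H\beta)_\theta=\frac{d}{ds}\Big|_{s=0}\phi_s^*\big(\beta_{\theta+s}\big),
\]
where $\phi_s:f^{-1}(\theta)\to f^{-1}(\theta+s)$. The right-hand side is the derivative of a smooth family of closed forms on the fixed compact manifold $f^{-1}(\theta)$.

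\emph{Step 3: comparison with Gauss--Manin.} The maps $\phi_s$ are diffeomorphisms between fibres, depending continuously on $s$ with $\phi_0=\mathrm{id}$. Hence $\phi_s^*$ carries the integral lattice $H^i(f^{-1}(\theta+s),\mathbb{Z})^{\mathrm{free}}$ onto $H^i(f^{-1}(\theta),\mathbb{Z})^{\mathrm{free}}$. It is therefore the parallel transport of $\nabla^{GM}$, whose flat sections are precisely the locally constant integral classes. This gives
\[
\nabla^{GM}_{\partial_\theta}c\,(\theta)=\frac{d}{ds}\Big|_{s=0}\phi_s^*\,c(\theta+s)=\frac{d}{ds}\Big|_{s=0}\big[\phi_s^*\beta_{\theta+s}\big].
\]
It remains to move the derivative inside the cohomology class. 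To do this, pair the classes with a fixed basis of real cycles $Z_j$ on $f^{-1}(\theta)$. The function $s\mapsto\int_{Z_j}\phi_s^*\beta_{\theta+s}$ is smooth, and by differentiation under the integral its derivative at $s=0$ is $\int_{Z_j}(L_H\beta)_\theta$. By de Rham's theorem a class is determined by these periods, so the derivative of the classes equals the class of the derivative. Combined with Step 2 this gives $[L_H\beta]=\nabla^{GM}_{\partial_\theta}[\beta]$.

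I expect Step 3 to be the main obstacle: identifying $\phi_s^*$ with the Gauss--Manin parallel transport, and justifying the interchange of $d/ds$ with taking classes. The period-pairing argument above is what I would use to handle both.
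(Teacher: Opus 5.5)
Your proof is correct. The paper states this lemma without proof, treating it as a standard fact, so there is no competing argument to compare against. Your route is the natural way to fill it in: use the flow of $H$ as an Ehresmann trivialization, realize $L_H$ as the $s$-derivative of $\phi_s^*\beta_{\theta+s}$, and use periods to commute $d/ds$ with passing to classes.

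Two small points of phrasing. In Step 3, preservation of the integral lattice does not by itself identify $\phi_s^*$ with Gauss--Manin parallel transport. What does the work is that $(x,s)\mapsto\phi_s(x)$ is a local trivialization of $f$ with $\phi_0=\mathrm{id}$. Consequently, for an integral class $\gamma$, the section $s\mapsto(\phi_s^*)^{-1}\gamma$ is a continuous (indeed smooth) section of the lattice bundle, and hence flat. You mention continuity and $\phi_0=\mathrm{id}$, so the ingredient is present; the ``therefore'' should rest on it rather than on lattice preservation. In Step 1, $L_H\alpha=0$ uses $d\alpha=0$ as well as $\iota_H\alpha=1$.
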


We may therefore write the long exact sequence induced by the short exact sequence \ref{eq:twistedfol} as follows: 
\begin{equation}\label{longexact}
\cdots \to \Gamma(\mathcal{H}^{i-1}) \to H^{i}_{M}(\lambda) \to \Gamma(\mathcal{H}^{i}) \to  \Gamma(\mathcal{H}^{i}) \to H^{i+1}_{M}(\lambda) \to \cdots
\end{equation}
where the connecting homomorphism is given by 
\[
\nabla^{GM}_{\partial_{\theta}} + \lambda id : \Gamma(\mathcal{H}^{i}) \to  \Gamma(\mathcal{H}^{i}). 
\]
This gives us the following description of $d_{\lambda}$-cohomology: 
\begin{equation} \label{ses2}
0 \to H^{1}_{\lambda}(S^{1}, \mathcal{H}^{r-1}) \to H^{r}_{M}(\lambda) \to H^{0}_{\lambda}(S^1, \mathcal{H}^{r}) \to 0,
\end{equation}
where the right and left hand groups are the cohomologies of the connection $\nabla^{GM} + \lambda d\theta$: 
\[
H^{0}_{\lambda}(S^1, \mathcal{H}^{r}) = \ker(\nabla^{GM}_{\partial_{\theta}} + \lambda id ), \qquad H^{1}_{\lambda}(S^{1}, \mathcal{H}^{r-1})  = \mathrm{coker}(\nabla^{GM}_{\partial_{\theta}} + \lambda id ). 
\]
These can be computed using the following fact about the cohomology of local systems over the circle. 

\begin{lemma} \label{localsystemcoh}
Let $(E, \nabla)$ be a vector bundle with connection over the circle. Let $M : E_{1} \to E_{1}$ be the monodromy automorphism, obtained by taking the parallel transport around the circle. Then 
\[
H^{0}(E, \nabla) \cong \mathrm{ker}(M - id), \qquad H^{1}(E, \nabla) \cong \mathrm{coker}(M - id). 
\]
\end{lemma}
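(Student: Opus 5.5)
The plan is to trivialize the bundle by parallel transport along the circle and reduce to an ordinary linear differential equation with periodic-type boundary condition. Fix the basepoint $1 \in S^1$ and let $P_\theta : E_1 \to E_\theta$ be parallel transport along the arc from $1$ to $\theta$, for $\theta \in [0,1]$ (parametrizing $S^1 = \mathbb{R}/\mathbb{Z}$). Pulling $E$ back to $[0,1]$ and using $P_\theta$ as a flat frame identifies sections of $E$ with smooth maps $s : [0,1] \to E_1$ satisfying the gluing condition $s(1) = M s(0)$. The gluing must hold with all derivatives to give a smooth section; since $\nabla$ becomes $\partial_\theta$ in this frame, this matching is automatic once it holds for $s$ and $\nabla s$. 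Under this identification the de Rham complex of $(E,\nabla)$ becomes the two-term complex $\nabla_{\partial_\theta} = \tfrac{d}{d\theta} : \Gamma(E) \to \Gamma(E)\,d\theta$.

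For $H^0$, a flat section is a constant map $s \equiv v$, and the gluing condition reads $v = Mv$. Hence $H^0(E,\nabla) \cong \ker(M - id)$, via evaluation at the basepoint.

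For $H^1$, I define the map $\Gamma(E)\,d\theta \to E_1$ by $\xi \mapsto \int_0^1 \xi(\theta)\,d\theta$, computed in the flat frame, and compose with the projection to $\mathrm{coker}(M - id)$. There are three things to check.

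\begin{enumerate}
\item \emph{Exact forms map to zero.} If $\xi = ds/d\theta$, then $\int_0^1 \xi = s(1) - s(0) = (M - id)s(0)$, which lies in the image of $M - id$.
\item \emph{Surjectivity.} Given $w \in E_1$, take $\xi = \psi(\theta)\,w$ with $\psi$ a bump function supported in the interior of $(0,1)$ and of integral $1$. This defines a smooth section, and it maps to $w$.
\item \emph{Injectivity.} Suppose $\int_0^1 \xi = (M - id)v$. Set $s(\theta) = v + \int_0^\theta \xi$. Then $s(1) = v + Mv - v = Mv = Ms(0)$, and $s' = \xi$, so the gluing condition holds to all orders because $\xi$ is itself a smooth section. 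Hence $\xi = \nabla s$ is exact.
\end{enumerate}

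The main obstacle is being careful with the smooth gluing at the basepoint in the injectivity step. To handle it, I would phrase everything on the universal cover $\mathbb{R}$ with the equivariance condition $s(\theta + 1) = M s(\theta)$, which makes smoothness automatic. The primitive $s(\theta) = v + \int_0^\theta \xi$ then satisfies this equivariance because $\xi$ is equivariant, so it descends to a smooth section of $E$ over the circle.
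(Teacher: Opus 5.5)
Your proof is correct and complete. The paper gives no proof of this lemma to compare against: it states it as a standard fact about local systems on the circle and uses it in Theorem~\ref{lambdacohomology}. Your argument is the standard direct one. You take a flat frame on the universal cover with an equivariance condition, and use integration over a fundamental domain as the map $H^1 \to \mathrm{coker}(M - id)$. The other common route identifies twisted de Rham cohomology with group cohomology $H^\bullet(\mathbb{Z}, E_1)$, computed by the two-term complex $E_1 \xrightarrow{M - id} E_1$. Your integration map is an explicit realization of that identification, and it avoids quoting a de Rham theorem with local coefficients.

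Two minor points.

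First, the direction convention. Take $M$ to be transport once around the circle in the positive direction, and write $s(\theta) = P_\theta^{-1}\sigma(\theta)$. Then $s(0)=\sigma(0)$ and $s(1) = M^{-1}\sigma(1) = M^{-1}s(0)$, so the correct gluing condition is $s(1) = M^{-1}s(0)$; equivalently $\tilde s(\theta+1) = M^{-1}\tilde s(\theta)$ on the universal cover. Your $s(1) = M s(0)$ corresponds to the opposite orientation. This is harmless: $M^{-1} - id = -M^{-1}(M - id)$, and $M^{\pm 1}$ preserve $\mathrm{im}(M - id)$, so the kernels and cokernels of $M - id$ and $M^{-1}-id$ coincide.

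Second, the claim that all-order matching is ``automatic once it holds for $s$ and $\nabla s$'' should be stated precisely. If $s$ matches to order zero and $\nabla s$ is a smooth section, then $s$ is smooth. That is exactly how you use it in the injectivity step, and your universal cover formulation makes it immediate.
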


The following result computes the twisted cohomology groups. 
\begin{theorem} \label{lambdacohomology}
Let $M$ be the mapping torus for $\phi : N \to N$, and let $\alpha = f^*(d\theta)$. Then the cohomology groups $H^{\bullet}_{M}(\lambda)$ of the twisted de Rham complex $(\Omega^{\bullet}_{M}, d_{\lambda} = d + \lambda \alpha \wedge )$, for $\lambda \in \mathbb{R}$, are described by the following short exact sequences 
\[
0 \to H^{r-1}(N)_{e^{-\lambda}\phi_{*}} \to H^{r}_{M}(\lambda) \to H^{r}(N)^{e^{-\lambda}\phi_{*}}  \to 0, 
\]
where $H^{r}(N)^{e^{-\lambda}\phi_{*}} $ is the $e^{\lambda}$ eigenspace of $\phi_{*} : H^{r}(N) \to H^{r}(N)$, and $H^{r-1}(N)_{e^{-\lambda}\phi_{*}} $ is the cokernel of 
\[
 \phi_{*} - e^{\lambda} : H^{r-1}(N) \to H^{r-1}(N). 
 \]
\end{theorem}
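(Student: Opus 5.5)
The plan is to compute $H^\bullet_M(\lambda)$ from the long exact sequence \eqref{longexact}, equivalently from the short exact sequence \eqref{ses2}. The kernel and cokernel of the connecting map $\nabla^{GM}_{\partial_\theta}+\lambda\, id$ on $\Gamma(\mathcal{H}^r)$ will be identified with monodromy invariants and coinvariants via Lemma \ref{localsystemcoh}. So the whole proof reduces to identifying the monodromy of the twisted flat connection $\nabla^{GM}+\lambda\, d\theta$ on $\mathcal{H}^r$ over $S^1$.

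First I would record that the Gauss--Manin connection on $\mathcal{H}^r$ has monodromy on the fibre $\mathcal{H}^r|_{1}=H^r(N)$ given by $\phi_*$ (or its inverse, depending on conventions). Fix the orientation of the loop and the gluing $(\phi(n),\theta)\sim(n,\theta+1)$ so that it is $\phi_*$. This holds because parallel transport for $\nabla^{GM}$ is transport of integral cohomology classes along the flow of $H$, and the time-$1$ flow is $\phi$. Next, tensoring with the trivial line bundle carrying the connection $d+\lambda\, d\theta$ multiplies the monodromy by the scalar $e^{-\lambda}$. Indeed, the flat sections of $\partial_\theta+\lambda$ are $c\,e^{-\lambda\theta}$, so one circuit contributes the factor $e^{-\lambda}$. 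Hence the monodromy of $\nabla^{GM}+\lambda\,d\theta$ is $e^{-\lambda}\phi_*$.

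Applying Lemma \ref{localsystemcoh} then gives
\[
H^0_\lambda(S^1,\mathcal{H}^r)\cong \ker(e^{-\lambda}\phi_*-id)=\ker(\phi_*-e^{\lambda}),
\]
which is the $e^\lambda$-eigenspace $H^r(N)^{e^{-\lambda}\phi_*}$. It also gives
\[
H^1_\lambda(S^1,\mathcal{H}^{r-1})\cong\mathrm{coker}(e^{-\lambda}\phi_*-id)=\mathrm{coker}(\phi_*-e^\lambda),
\]
where the last equality uses that multiplying by the nonzero scalar $e^{-\lambda}$ does not change the kernel or the image. Substituting these into \eqref{ses2} yields exactly the claimed short exact sequence.

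The only genuinely delicate step is the sign and orientation bookkeeping in the monodromy: whether the Gauss--Manin monodromy is $\phi_*$ or $\phi_*^{-1}$, and whether the twist contributes $e^{-\lambda}$ or $e^{\lambda}$. These must be consistent with the $\mathbb{Z}$-action $k\ast(n,\theta)=(\phi^{-k}(n),\theta+k)$ and with the sign in $d_\lambda=d+\lambda\alpha$. I would settle this with a direct check. Take a class $c\in H^r(N)$ with $\phi_*c=\mu c$, and form the form $e^{s\theta}\tilde c$ on $N\times\mathbb{R}$, where $\tilde c$ is a representative. It descends to $M$ exactly when $\mu$ and $e^{s}$ match appropriately. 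It is $d_\lambda$-closed exactly when $s=-\lambda$, and its restriction to a fibre gives the eigenvector. I would then align the conventions with this check; compare the example in Section \ref{cat}, where $\lambda=\log r$ produces $E_r$.
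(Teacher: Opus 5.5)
Your proposal is correct and is essentially the paper's own proof: reduce via \eqref{ses2} and Lemma \ref{localsystemcoh} to computing the monodromy of $\nabla^{GM}+\lambda\,d\theta$, obtain $e^{-\lambda}\phi_*$ from the flat sections $e^{-\lambda\theta}s$ with $s$ Gauss--Manin flat, and note that $e^{-\lambda}\phi_*-\mathrm{id}$ and $\phi_*-e^{\lambda}$ have the same kernel and cokernel. The paper simply asserts that the Gauss--Manin monodromy is $\phi_*$, without the orientation bookkeeping you propose, so your final consistency check is optional; as stated it would also need a representative $\tilde c$ that is an eigenform of $\phi$, not merely a representative of an eigenclass, in order to descend to $M$.
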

\begin{proof}
Starting from the short exact sequence \ref{ses2} and Lemma \ref{localsystemcoh}, it suffices to determine the monodromy of $\nabla^{GM} + \lambda d\theta$. First, the monodromy of the Gauss-Manin connection $\nabla^{GM}$ is given by the pushforward map $\phi_{*}: H^{r}(N) \to H^{r}(N)$. Given a flat section $s$ with respect to $\nabla^{GM}$, we compute 
\[
\nabla^{GM}_{\partial_{\theta}}(e^{-\lambda \theta} s) = \partial_{\theta}(e^{-\lambda \theta}) s = -\lambda e^{-\lambda \theta} s,
\]
implying that $e^{-\lambda \theta} s$ is flat with respect to $\nabla^{GM} + \lambda d\theta$. It follows that the monodromy of $\nabla^{GM} + \lambda d\theta$ is given by $e^{-\lambda} \phi_{*}$. The result then follows from the observation that $e^{-\lambda} \phi_{*} - id$ has the same kernel and cokernel as $\phi_{*} - e^{\lambda} id$. 
\end{proof}

\bibliographystyle{alpha}
\bibliography{references} 

@misc{Rankin26,
      title={Algebroid Desingularizable Poisson Structures}, 
      author={Shane Rankin},
      year={2026},
      eprint={2605.22519},
      archivePrefix={arXiv},
      primaryClass={math.DG},
      url={https://arxiv.org/abs/2605.22519}, 
}

@article{NestTsygan2001,
  author    = {Nest, Ryszard and Tsygan, Boris},
  title     = {Deformations of symplectic {L}ie algebroids, deformations of holomorphic symplectic structures, and index theorems},
  journal   = {Asian Journal of Mathematics},
  volume    = {5},
  number    = {4},
  pages     = {599--635},
  year      = {2001},
  doi       = {10.4310/AJM.2001.v5.n4.a2},
  publisher = {International Press of Boston}
}

@article{francis2024singular,
  title={On singular foliations tangent to a given hypersurface},
  author={Francis, Michael D},
  journal={Journal of Noncommutative Geometry},
  year={2024}
}

@article{matviichuk2020local,
  title={A local Torelli theorem for log symplectic manifolds},
  author={Matviichuk, Mykola and Pym, Brent and Schedler, Travis},
  journal={arXiv preprint arXiv:2010.08692},
  year={2020}
}

@book{farb2011primer,
  title={A primer on mapping class groups},
  author={Farb, Benson and Margalit, Dan},
  volume={49},
  year={2011},
  publisher={Princeton university press}
}

@article{babenko1998nonformal,
  title={On nonformal simply-connected symplectic manifolds},
  author={Babenko, Ivan K and Taimanov, Iskander A},
  journal={Siberian Mathematical Journal},
  volume={41},
  number={2},
  pages={204--217},
  year={2000},
}

@article{nomizu1954cohomology,
  title={On the cohomology of compact homogeneous spaces of nilpotent Lie groups},
  author={Nomizu, Katsumi},
  journal={Annals of Mathematics},
  volume={59},
  number={3},
  pages={531--538},
  year={1954},
  publisher={JSTOR}
}

@article{babenko2000massey,
  title={Massey products in symplectic manifolds},
  author={Babenko, Ivan K and Taimanov, Iskander A},
  journal={Sbornik: Mathematics},
  volume={191},
  number={8},
  pages={1107},
  year={2000},
  publisher={IOP Publishing}
}

@article{foliationpaper,
      title={$b^k$-algebroids and the variety of foliation jets}, 
      author={Francis Bischoff and Alvaro del Pino and Aldo Witte},
      year={2025},
      eprint={2508.20241},
      archivePrefix={arXiv},
      primaryClass={math.DG},
      url={https://arxiv.org/abs/2508.20241}, 
      journal={arXiv preprint arXiv:2508.20241},
}

@article{bischoff2023jets,
  title={Jets of foliations and bk-algebroids},
  author={Francis Bischoff and Alvaro del Pino and Aldo Witte},
  journal={arXiv preprint arXiv:2311.17045},
  year={2023}
}

@article {MR256413,
    AUTHOR = {Tischler, D.},
     TITLE = {On fibering certain foliated manifolds over {$S^{1}$}},
   JOURNAL = {Topology},
  FJOURNAL = {Topology. An International Journal of Mathematics},
    VOLUME = {9},
      YEAR = {1970},
     PAGES = {153--154},
      ISSN = {0040-9383},
   MRCLASS = {57.36},
  MRNUMBER = {256413},
MRREVIEWER = {H. Suzuki},
       DOI = {10.1016/0040-9383(70)90037-6},
       URL = {https://doi-org.libproxy.uregina.ca/10.1016/0040-9383(70)90037-6},
}

@article{deligne1971hodge,
  title={Hodge theory. II},
  author={Deligne, Pierre},
  journal={Publication Mathematique IHES},
  volume={40},
  pages={5--58},
  year={1971}
}

@article {MR3952555,
    AUTHOR = {Guillemin, Victor and Miranda, Eva and Weitsman, Jonathan},
     TITLE = {Desingularizing {$b^m$}-symplectic structures},
   JOURNAL = {Int. Math. Res. Not. IMRN},
  FJOURNAL = {International Mathematics Research Notices. IMRN},
    VOLUME = {2019},
      YEAR = {2019},
    NUMBER = {10},
     PAGES = {2981--2998},
      ISSN = {1073-7928,1687-0247},
   MRCLASS = {53D17 (53D05)},
  MRNUMBER = {3952555},
MRREVIEWER = {Mohamed\ Boucetta},
       DOI = {10.1093/imrn/rnx126},
       URL = {https://doi.org/10.1093/imrn/rnx126},
}

@article {MR4236806,
    AUTHOR = {Miranda, Eva and Scott, Geoffrey},
     TITLE = {The geometry of {$E$}-manifolds},
   JOURNAL = {Rev. Mat. Iberoam.},
  FJOURNAL = {Revista Matem\'{a}tica Iberoamericana},
    VOLUME = {37},
      YEAR = {2021},
    NUMBER = {3},
     PAGES = {1207--1224},
      ISSN = {0213-2230,2235-0616},
   MRCLASS = {53D17 (57R30)},
  MRNUMBER = {4236806},
MRREVIEWER = {Tomasz\ Rybicki},
       DOI = {10.4171/rmi/1232},
       URL = {https://doi.org/10.4171/rmi/1232},
}

@article {MR4257086,
    AUTHOR = {Guillemin, Victor W. and Miranda, Eva and Weitsman, Jonathan},
     TITLE = {On geometric quantization of {$b^m$}-symplectic manifolds},
   JOURNAL = {Math. Z.},
  FJOURNAL = {Mathematische Zeitschrift},
    VOLUME = {298},
      YEAR = {2021},
    NUMBER = {1-2},
     PAGES = {281--288},
      ISSN = {0025-5874,1432-1823},
   MRCLASS = {53D50 (37J06 53D17)},
  MRNUMBER = {4257086},
MRREVIEWER = {Igor\ Mencattini},
       DOI = {10.1007/s00209-020-02590-w},
       URL = {https://doi.org/10.1007/s00209-020-02590-w},
}

@article {MR4523256,
    AUTHOR = {Cardona, Robert and Miranda, Eva},
     TITLE = {Integrable systems on singular symplectic manifolds: from
              local to global},
   JOURNAL = {Int. Math. Res. Not. IMRN},
  FJOURNAL = {International Mathematics Research Notices. IMRN},
    VOLUME = {2022},
      YEAR = {2022},
    NUMBER = {24},
     PAGES = {19565--19616},
      ISSN = {1073-7928,1687-0247},
   MRCLASS = {37J06 (37J35 53D05)},
  MRNUMBER = {4523256},
       DOI = {10.1093/imrn/rnab253},
       URL = {https://doi.org/10.1093/imrn/rnab253},
}

@article{gualtTrop,
	Author = {Gualtieri, Marco and Li, Songhao and Pelayo, {\'A}lvaro and Ratiu, Tudor S.},
	Da = {2017/04/01},
	Doi = {10.1007/s00208-016-1427-9},
	Id = {Gualtieri2017},
	Isbn = {1432-1807},
	Journal = {Mathematische Annalen},
	Number = {3},
	Pages = {1217--1258},
	Title = {The tropical momentum map: a classification of toric log symplectic manifolds},
	Ty = {JOUR},
	Url = {https://doi.org/10.1007/s00208-016-1427-9},
	Volume = {367},
	Year = {2017}}

@book {MR0417174,
    AUTHOR = {Deligne, Pierre},
     TITLE = {\'{E}quations diff\'{e}rentielles \`a points singuliers
              r\'{e}guliers},
    SERIES = {Lecture Notes in Mathematics},
    VOLUME = {Vol. 163},
 PUBLISHER = {Springer-Verlag, Berlin-New York},
      YEAR = {1970},
     PAGES = {iii+133},
   MRCLASS = {14D05 (14C30)},
  MRNUMBER = {417174},
MRREVIEWER = {Helmut\ Hamm},
}

@article{Radko2001ACO,
  title={A classification of topologically stable Poisson structures on a compact oriented structure},
  author={Olga Radko},
  journal={Journal of Symplectic Geometry},
  year={2001},
  volume={1},
  pages={523-542},
  url={https://api.semanticscholar.org/CorpusID:122897797}
}

@article{MR3523250,
	Author = {Scott, Geoffrey},
	Doi = {10.4310/JSG.2016.v14.n1.a3},
	Fjournal = {The Journal of Symplectic Geometry},
	Issn = {1527-5256},
	Journal = {J. Symplectic Geom.},
	Mrclass = {53D17 (58J40)},
	Mrnumber = {3523250},
	Mrreviewer = {Alberto Parmeggiani},
	Number = {1},
	Pages = {71--95},
	Title = {The geometry of {$b^k$} manifolds},
	Url = {https://doi-org.ezproxy-prd.bodleian.ox.ac.uk/10.4310/JSG.2016.v14.n1.a3},
	Volume = {14},
	Year = {2016}}

@article{saito1980theory,
	Author = {Saito, Kyoji},
	Journal = {J. Fac. Sci. Univ. Tokyo Sect. IA Math},
	Number = {2},
	Pages = {265--291},
	Title = {Theory of logarithmic differential forms and logarithmic vector fields},
	Volume = {27},
	Year = {1980}}

@book {MR1348401,
    AUTHOR = {Melrose, Richard B.},
     TITLE = {The {A}tiyah-{P}atodi-{S}inger index theorem},
    SERIES = {Research Notes in Mathematics},
    VOLUME = {4},
 PUBLISHER = {A K Peters, Ltd., Wellesley, MA},
      YEAR = {1993},
     PAGES = {xiv+377},
      ISBN = {1-56881-002-4},
   MRCLASS = {58G10 (58G15 58G25)},
  MRNUMBER = {1348401},
MRREVIEWER = {Rafe Mazzeo},
       DOI = {10.1016/0377-0257(93)80040-i},
       URL = {https://doi-org.ezproxy-prd.bodleian.ox.ac.uk/10.1016/0377-0257(93)80040-i},
}

@article {MR4229238,
    AUTHOR = {Lanius, Melinda},
     TITLE = {Symplectic, {P}oisson, and contact geometry on scattering
              manifolds},
   JOURNAL = {Pacific J. Math.},
  FJOURNAL = {Pacific Journal of Mathematics},
    VOLUME = {310},
      YEAR = {2021},
    NUMBER = {1},
     PAGES = {213--256},
      ISSN = {0030-8730},
   MRCLASS = {53D17 (53D05 53D10)},
  MRNUMBER = {4229238},
MRREVIEWER = {Joseph Dongho},
       DOI = {10.2140/pjm.2021.310.213},
       URL = {https://doi-org.ezproxy-prd.bodleian.ox.ac.uk/10.2140/pjm.2021.310.213},
}

@article{Gualtieri-Li-2012,
	Author = {Gualtieri, M. and Li, S.},
	Journal = {International Mathematics Research Notices},
	Number = {11},
	Pages = {3022--3074},
	Publisher = {OUP},
	Title = {Symplectic groupoids of log symplectic manifolds},
	Volume = {2014},
	Year = {2014}}

@article{GUILLEMIN2014864,
	Author = {Victor Guillemin and Eva Miranda and Ana Rita Pires},
	Doi = {https://doi.org/10.1016/j.aim.2014.07.032},
	Issn = {0001-8708},
	Journal = {Advances in Mathematics},
	Pages = {864-896},
	Title = {Symplectic and Poisson geometry on b-manifolds},
	Url = {https://www.sciencedirect.com/science/article/pii/S0001870814002722},
	Volume = {264},
	Year = {2014}}

@article{BLM19,
	Author = {Bursztyn, Henrique and Lima, Hudson and Meinrenken, Eckhard},
	Doi = {10.1515/crelle-2017-0014},
	Fjournal = {Journal f\"{u}r die Reine und Angewandte Mathematik. [Crelle's Journal]},
	Issn = {0075-4102},
	Journal = {J. Reine Angew. Math.},
	Mrclass = {70G45 (53D05)},
	Mrnumber = {4000576},
	Mrreviewer = {Camelia Arie\c{s}anu},
	Pages = {281--312},
	Title = {Splitting theorems for {P}oisson and related structures},
	Url = {https://doi-org.ezproxy-prd.bodleian.ox.ac.uk/10.1515/crelle-2017-0014},
	Volume = {754},
	Year = {2019}}

@article{Wei00,
	Author = {Weinstein, Alan},
	Doi = {10.1007/s100970050014},
	Fjournal = {Journal of the European Mathematical Society (JEMS)},
	Issn = {1435-9855},
	Journal = {J. Eur. Math. Soc. (JEMS)},
	Mrclass = {53C40 (53C20 57S15)},
	Mrnumber = {1750452},
	Mrreviewer = {McKenzie Y. Wang},
	Number = {1},
	Pages = {53--86},
	Title = {Almost invariant submanifolds for compact group actions},
	Url = {https://doi-org.ezproxy-prd.bodleian.ox.ac.uk/10.1007/s100970050014},
	Volume = {2},
	Year = {2000}}

@article{Fer02,
	Author = {Fernandes, Rui Loja},
	Doi = {10.1006/aima.2001.2070},
	Fjournal = {Advances in Mathematics},
	Issn = {0001-8708},
	Journal = {Adv. Math.},
	Mrclass = {58H05 (53C05 53C12 53C29 53D17 57R20 57R30)},
	Mrnumber = {1929305},
	Mrreviewer = {Jan Kubarski},
	Number = {1},
	Pages = {119--179},
	Title = {Lie algebroids, holonomy and characteristic classes},
	Url = {https://doi-org.ezproxy-prd.bodleian.ox.ac.uk/10.1006/aima.2001.2070},
	Volume = {170},
	Year = {2002}}

@incollection{Duf01,
	Author = {Dufour, Jean-Paul},
	Booktitle = {Lie algebroids and related topics in differential geometry ({W}arsaw, 2000)},
	Doi = {10.4064/bc54-0-3},
	Mrclass = {53D17 (22A99 58H05)},
	Mrnumber = {1881647},
	Mrreviewer = {Rui Loja Fernandes},
	Pages = {35--41},
	Publisher = {Polish Acad. Sci. Inst. Math., Warsaw},
	Series = {Banach Center Publ.},
	Title = {Normal forms for {L}ie algebroids},
	Url = {https://doi-org.ezproxy-prd.bodleian.ox.ac.uk/10.4064/bc54-0-3},
	Volume = {54},
	Year = {2001}}
\end{document}